\newtheorem{thm}{Theorem}[section]
\newtheorem{lem}[thm]{Lemma}
\newtheorem{cor}[thm]{Corollary}
\newtheorem{pro}[thm]{Proposition}
\newtheorem{rmk}[thm]{Remark}
\newtheorem{defi}[thm]{Definition}
\newcommand {\emptycomment}[1]{}
\newcommand{\K}{\mathbb{K}}
\newcommand{\Integ}{\mathbb Z}
\newcommand{\huaB}{\mathcal{B}}
\newcommand{\huaH}{\mathcal{H}}
\newcommand{\huaO}{\mathcal{O}}
\newcommand{\huaZ}{\mathcal{Z}}
\newcommand{\g}{\mathfrak g}
\newcommand{\frkg}{\mathfrak g}
\newcommand{\frkT}{\mathfrak T}
\newcommand{\frkU}{\mathfrak U}
\newcommand{\frkX}{\mathfrak X}
\newcommand{\frkY}{\mathfrak Y}
\newcommand{\Courant}[1]{\left\llbracket  #1\right\rrbracket }
\newcommand{\Id}{\rm{Id}}
\newcommand{\br}[1]{   [ \cdot,    \cdot  ]   }
\newcommand{\id}{\mathrm{id}}
\newcommand{\dM}{\mathrm{d}}
\newcommand{\Hom}{\mathrm{Hom}}
\newcommand{\gl}{\mathfrak {gl}}
\newcommand{\nl}{n-\mathrm{Lie}}
\newcommand{\Ker}{\mathrm{Ker}}
\newcommand{\ad}{\mathrm{ad}}
\newcommand{\LP}{$\mathsf{LieRep}$~}
\begin{document}

\title{
{Lie $\MakeLowercase{n}$-algebras and cohomologies of relative Rota-Baxter operators on $\MakeLowercase{n}$-Lie algebras}
 }\vspace{2mm}
\author{Ming Chen}
\address{School of Mathematics and Statistics, Northeast Normal University, Changchun 130024, Jilin, China}
\email{chenm468@nenu.edu.cn}

\author{Jiefeng Liu}
\address{School of Mathematics and Statistics, Northeast Normal University, Changchun 130024, Jilin, China}
\email{liujf534@nenu.edu.cn}

\author{Yao Ma}
\address{School of Mathematics and Statistics, Northeast Normal University, Changchun 130024, Jilin, China}
\email{may703@nenu.edu.cn}
\vspace{-5mm}

\date{}
\renewcommand{\thefootnote}{\fnsymbol{footnote}}
\footnotetext[0]{{\it{Keywords}: $n$-Lie algebra, Lie $n$-algebra, relative Rota-Baxter operator, cohomology, deformation.}}
\footnotetext[0]{{\it{MSC}}: 17A42, 17A60,17B38, 18G60.}

\maketitle

\begin{abstract}
Based on the differential graded Lie algebra controlling deformations of an $n$-Lie algebra with a representation (called an $n$-\LP pair), we construct a Lie $n$-algebra, whose Maurer-Cartan elements  characterize relative Rota-Baxter operators on $n$-\LP pairs.  The notion of an $n$-pre-Lie algebra is introduced, which is the underlying algebraic structure of the relative Rota-Baxter operator.  We give the cohomology of relative Rota-Baxter operators and study infinitesimal deformations  and extensions of order $m$ deformations to order $m+1$ deformations of relative Rota-Baxter operators through the cohomology groups of relative Rota-Baxter operators. Moreover, we build the relation between the cohomology groups of relative Rota-Baxter operators on $n$-\LP pairs and those on $(n+1)$-\LP pairs by certain linear functions.
\end{abstract}

\tableofcontents
\section{Introduction}

In this paper, we use Maurer-Cartan elements of Lie $n$-algebras to characterize the relative Rota-Baxter operators on $n$-\LP pairs and study the cohomology and deformations of  relative Rota-Baxter operators on $n$-\LP pairs.

\subsection{$n$-Lie algebras,Lie $n$-algebras and relative Rota-Baxter operators on $n$-\LP pairs}
The notion of an $n$-Lie algebra, or a Filippov algebra was introduced in \cite{Fili85}. It is the algebraic structure corresponding to Nambu mechanics (\cite{Nambu,Tak}). $n$-Lie algebras, or more generally, $n$-Leibniz algebras, have attracted attention from several fields of mathematics and physics due to their close connection with dynamics, geometries as well as string theory (\cite{BaLa08,BaLa09,Casas02,CS,MeFi08,MeFi09,Fig08,Fig09,HHM,P}).  For example, the structure of $n$-Lie algebras are applied to the study of the supersymmetry and gauge symmetry transformations of the word-volume theory of multiple M2-branes  and the generalized identity for $n$-Lie algebras can be regarded as a generalized Pl${\rm\ddot{u}}$cker relation in the physics literature. See the review article \cite{review} for more details.

The notion of a Lie $n$-algebra was introduced by Hanlon and Wachs in \cite{HW}. Lie $n$-algebra is a special $L_\infty$-algebra, in which only the $n$-ary bracket is nonzereo. See \cite{Azc97,Sta85,Stash92} for more details on Lie $n$-algebras and  $L_\infty$-algebras. One useful method for constructing $L_\infty$-algebras is given by Vornov's higher derived bracket (\cite{Vor05}) and another one is by twisting with the Maurer-Cartan elements of a given $L_\infty$-algebra (\cite{Get09}). In this paper, we will use these methods to construct Lie $n$-algebras and $L_\infty$-algebras that characterize relative Rota-Baxter operators on $n$-Lie algebras as Maurer-Cartan elements and deformations of relative Rota-Baxter operators.

The classical Yang-Baxter equation plays a significant role in many fields in mathematics and mathematical physics such as quantum groups (\cite{CP1,Dr87}) and integrable systems (\cite{Semonov-Tian-Shansky}). In order to gain better understanding of the relationship between the classical Yang-Baxter equation and the related integrable systems, the more general notion of a relative Rota-Baxter operator (also called $\huaO$-operator) on a \LP pair was introduced by Kupershmidt (\cite{Kuper1}). To study solutions of $3$-Lie classical Yang-Baxter equation, the notion of a relative Rota-Baxter operator on a $3$-\LP pair was introduced in \cite{BaiC19}. A relative Rota-Baxter operator on a $3$-\LP pair $(\g;\ad)$, where $\ad$ is the adjoint representation of the $3$-Lie algebra $\g$, is exactly the Rota-Baxter operator on the $3$-Lie algebra $\g$ introduced in  \cite{BR}. See the book \cite{Guo} for more details and applications about Rota-Baxter operators.

In \cite{TBGS}, the authors showed that a relative Rota-Baxter operator on a Lie algebra is a Maurer-Cartan element of a graded Lie algebra. Recently, it was shown in \cite{THS}  that a relative Rota-Baxter operator on a $3$-Lie algebra is a Maurer-Cartan element of a Lie $3$-algebra. The first purpose in this paper is to realize the relative Rota-Baxter operators on $n$-Lie algebras as Maurer-Cartan elements of Lie $n$-algebras.

Pre-Lie algebras are a class of nonassociative algebras coming from the study of convex
homogeneous cones, affine manifolds and affine structures on Lie groups, and the
cohomologies of associative algebras. See the survey \cite{Pre-lie algebra in geometry} and the references therein for more details. The beauty of a pre-Lie algebra is that the commutator gives rise to a Lie algebra and the
left multiplication gives rise to a representation of the commutator Lie algebra. Conversely,  a relative Rota-Baxter operator action on a Lie algebra gives rise to a pre-Lie algebra (\cite{GS,Kuper1}) and thus the pre-Lie algebra can be seen as the underlying algebraic structure of a relative Rota-Baxter operator. In this paper, we introduce the notion of an $n$-pre-Lie algebra, which gives an $n$-Lie algebra naturally and its left multiplication operator gives rise to a representation of this $n$-Lie algebra. An $n$-pre-Lie algebra can also be obtained through the action of a relative Rota-Baxter operator on an $n$-Lie algebra.
\subsection{Deformations and cohomology theories}
The theory of deformation plays a prominent role in mathematics and physics. In physics, the ideal of deformation appears in the perturbative quantum field theory and quantizing classical mechanics. The idea of treating deformation as a tool to study the algebraic structures was introduced by Gerstenhaber in his work of associative algebras (\cite{Gerstenhaber1,Gerstenhaber2}) and then was extended to Lie algebras by Nijenhuis and Richardson (\cite{NR1,NR2}).  Deformations of $3$-Lie algebras and $n$-Lie algebras are studied in \cite{Arfa18,Fig09,LiuJ16,Ta}. See the review paper \cite{review, Makhlouf} for more details. Recently,  people pay more attention on the deformations of morphisms (\cite{Arfa18,Fre04,Fre15,Fre152,Man,Yau07}), relative Rota-Baxter operators (\cite{Das20,TBGS,THS}) and diagram of algebras (\cite{Bar,GS,Markl}).

Usually the cohomology theory is an important tool in the study of deformations of a mathematical structure.  Typically, infinitesimal deformations are classified by a suitable second cohomology group and the obstruction of an order $n$ deformation extending to an order $n+1$ deformation can be controlled by the third cohomology group. Cohomology and deformations of relative Rota-Baxter operators ($\huaO$-operators) on associative algebras, Lie algebras and $3$-Lie algebras were studied in \cite{Das20,TBGS,THS}.

In the present paper, we study the cohomology theory of relative Rota-Baxter operators on $n$-\LP pairs. By using the underlying $n$-Lie algebra of a relative Rota-Baxter operator, we introduce the cohomology of a relative Rota-Baxter operator on an $n$-\LP pair. Then we study infinitesimal deformations and order $n$ deformations of a relative Rota-Baxter operator on an $n$-\LP pair. We show that infinitesimal deformations of relative Rota-Baxter operators are classified by the first cohomology group and that a higher order deformation of a relative Rota-Baxter operator is extendable if and only if its obstruction class in the second cohomology group of the relative Rota-Baxter operator is trivial.

\subsection{Outline of the paper}
In Section \ref{sec:MC-characterizations LP pairs}, we first recall representations and cohomologies
of $n$-Lie algebras and then construct a graded Lie algebra whose Maurer-Cartan elements are precisely $n$-\LP pairs.  In Section \ref{sec:MC-characterization RB-operators}, we use Voronov's higher derived brackets to construct a Lie $n$-algebra from an $n$-\LP pair and show that relative Rota-Baxter operators on $n$-\LP pairs can be characterized by Maurer-Cartan elements of the constructed Lie $n$-algebra. We give the notion of an $n$-pre-Lie algebra and show that a relative Rota-Baxter operator on an $n$-\LP pair can give rise to an $n$-pre-Lie algebra naturally. In Section \ref{sec:cohomology}, we define the cohomology of
a relative Rota-Baxter operator on an $n$-\LP pair using a certain $n$-\LP pair constructed by the relative Rota-Baxter operator. In Section \ref{sec:deformation}, we use the cohomology theory of relative Rota-Baxter operators to study deformations of relative Rota-Baxter operators. In Section \ref{sec:construction}, we study the relation between the cohomology groups of relative Rota-Baxter operators on $n$-\LP pairs and those on $(n+1)$-\LP pairs by certain linear functions.

In this paper, all the vector spaces are over an algebraically closed field $\mathbb K$ of characteristic $0$, and finite dimensional.

\vspace{2mm}

\section{Maurer-Cartan characterizations of $n$-\LP pairs}\label{sec:MC-characterizations LP pairs}
\subsection{Representations and cohomology of $n$-Lie algebras}
\begin{defi}{\rm(\cite{Fili85})}
An {\bf $n$-Lie algebra} is a vector space $\g$ together with a skew-symmetric linear map $[-,\cdots,-]_{\frak g}:\wedge^n{\frak g}\rightarrow {\frak g}$ such that for $x_i, y_j\in {\frak g}, 1\leq i\leq n-1,1\leq j\leq n$, the following identity holds:
\begin{equation}\label{1}
  [x_1,\cdots,x_{n-1},[y_1,\cdots,y_n]_{\frak g}]_{\frak g}=\sum\limits_{i=1}^n[y_1,\cdots,y_{i-1},[x_1,\cdots,x_{n-1},y_i]_{\frak g},y_{i+1},\cdots,y_n]_{\frak g}.
\end{equation}

 \end{defi}
For $x_1,x_2,\cdots,x_{n-1}\in \frak g$, define $\ad_{x_1,x_2,\cdots,x_{n-1}}\in \frak \gl(\frak g)$ by $\ad_{x_1,x_2,\cdots,x_{n-1}}y:=[x_1,x_2,\cdots,x_{n-1},y]_\g,  \forall  y \in \frak g.$
Then $\ad_{x_1,x_2,\cdots,x_{n-1}}$ is a derivation, i.e. $$\ad_{x_1,x_2,\cdots,x_{n-1}}[y_1,\cdots,y_n]_{\frak g}=\sum\limits_{i=1}^n[y_1,\cdots,y_{i-1},\ad_{x_1,x_2,\cdots,x_{n-1}}y_i,y_{i+1},\cdots,y_n]_{\frak g}.$$

\begin{defi}{\rm(\cite{Kasy87})}\label{defi:rep}
A {\bf representation} of an $n$-Lie algebra $(\frak g,[-,\cdots,-]_{\frak g})$ on a vector space $V$ is a linear map: $\rho:\wedge^{n-1}{\frak g}\rightarrow {\frak gl(V)},$ such that for all $x_1, x_2,\cdots,x_{n-1},y_1,\cdots,y_n\in \frak g,$ the following equalities hold:
\begin{eqnarray}
\label{eq:rep1}[\rho(\mathfrak{X}),\rho(\frkY)]&=&\rho(\mathfrak{X}\circ \frkY),\\
\label{eq:rep2}\rho(x_1,\cdots,x_{n-2},[y_1,\cdots,y_n])&=&\sum\limits_{i=1}^n(-1)^{n-i}\rho(y_1,\cdots,\widehat{y_{i}},\cdots,y_n)
\rho(x_1,\cdots,x_{n-2},y_i),
\end{eqnarray}
where $\mathfrak{X}=x_1\wedge\cdots\wedge x_{n-1},
\frkY=y_1\wedge\cdots\wedge y_{n-1}$ and  $\mathfrak{X}\circ \frkY=\sum\limits_{i=1}^{n-1}(y_1,\cdots,y_{i-1},[x_1,\cdots,x_{n-1},y_i]_\g,y_{i+1},\cdots,y_{n-1})$.
\end{defi}

Let $(V;\rho)$ be a representation of an $n$-Lie algebra $(\frak g,[-,\cdots,-]_{\frak g}).$  Denote by $$C^{m}_{\nl}(\g;V)=\Hom(\otimes^{m-1}(\wedge^{n-1}\g)\wedge\frak{g},V),\quad (m\geq 1),$$
which is the space of $n$-cochains. Define $\partial_\rho:C^{m}_{\nl}(\g;V)\rightarrow C^{m+1}_{\nl}(\g;V)$ by
\begin{eqnarray*}
\nonumber(\partial_\rho f)(\mathfrak{X}_1,\cdots,\mathfrak{X}_m,x_{m+1})&=&\sum\limits_{1\leq j< k\leq m}(-1)^jf(\mathfrak{X}_1,\cdots,\widehat{\mathfrak{X}_j},\cdots,\mathfrak{X}_{k-1},\mathfrak{X}_j\circ \mathfrak{X}_k,\mathfrak{X}_{k+1},\cdots,\mathfrak{X}_{m},x_{m+1})\\
&&+\sum\limits_{j=1}^m(-1)^{j}f(\mathfrak{X}_1,\cdots,\widehat{\mathfrak{X}_j},\cdots,\mathfrak{X}_{m},
[\mathfrak X_{j},x_{m+1}]_\g)\\
&&+\sum\limits_{j=1}^m(-1)^{j+1}\rho(\mathfrak X_{j})f(\mathfrak{X}_1,\cdots,\widehat{\mathfrak{X}_j},\cdots,\mathfrak{X}_{m},
x_{m+1})\\
&&+\sum\limits_{i=1}^{n-1}(-1)^{n+m-i+1}\rho(x^1_m,\cdots,\widehat{x^i_m},\cdots,x^{n-1}_m,x_{m+1})f(\mathfrak{X}_1,\cdots,\mathfrak{X}_{m-1},
x^i_{m})
\end{eqnarray*}
for any $\mathfrak{X}_i=x_i^1\wedge\cdots\wedge x_i^{n-1}\in \wedge^{n-1}\frak \g,i=1,2,\cdots,m,x_{m+1}\in \g.$

It was proved in \cite{Casas02} that $\partial_\rho\circ \partial_\rho=0.$ Thus, $(\oplus^{+\infty}_{m=1}C^{m}_{\nl}(\g;V),\partial_\rho)$ is a cochain complex.

\begin{defi}
The cohomology of the $n$-Lie algebra $\mathfrak{g}$ with coefficients in $V$ is the cohomology of the cochain complex  $(\oplus^{+\infty}_{m=1}C^{m}_{\nl}(\g;V),\partial_\rho)$.
 The corresponding $n$-th cohomology group is denoted by $H^m_{\nl}(\g;V)$.
 \end{defi}

\subsection{Bidegrees and graded Lie algebras on $C^{*}_{\nl}(\frak g;\frak g)$ }
Let $\g_1$ and $\g_2$ be two vector spaces. Denote by $\g^{l,k}$ the subspace of
$\otimes^{m}(\wedge^{n-1}(\g_1\oplus \g_2))\wedge(\g_1\oplus \g_2)$
which contains the number of $\g_1$ (resp. $\g_2$) is $l$ (resp. $k$).

The vector space
$\otimes^m(\wedge^{n-1}(\g_1\oplus \g_2))\wedge(\g_1\oplus \g_2)$ can be extended into the direct sum of $\g^{l,k},l+k=m(n-1)+1$. Furthermore, we have the following isomorphism
\begin{equation}\label{1234qwer}
C^m(\g_1\oplus \g_2,\g_1\oplus \g_2)\cong\sum\limits_{l+k=m(n-1)+1}\Hom(\g^{l,k},\g_1)\oplus\sum\limits_{l+k=m(n-1)+1}\Hom(\g^{l,k},\g_2).
\end{equation}
An element $f\in \Hom(\g^{l,k},\g_1)$(resp. $f\in \Hom(\g^{l,k},\g_2)$) naturally gives an element $\hat{f}\in C^m(\g_1\oplus \g_2,\g_1\oplus \g_2)$, which is called its lift. For example, the lifts of linear maps $\mu: \wedge^n\g_1\rightarrow\g_1, \rho: \wedge^{n-1}\g_1\otimes\g_2\rightarrow\g_2$ are defined by
\begin{eqnarray}
\hat{\mu}((x_1,u_1),\cdots,(x_n,u_n))&=& (\mu(x_1,\cdots, x_n),0),\\
\hat{\rho}((x_1,u_1),\cdots,(x_n,u_n))&=&(0,\sum_{i=1}^n(-1)^{n-i}\rho(x_1,\cdots,\hat{x_i},\cdots,x_n)(u_i)),
\end{eqnarray}
respectively. Let $H:\g_2\rightarrow \g_1$ be a linear map. Its lift is given by $\hat{H}(x,u)=(H(u),0)$.

\begin{defi}\label{bidegree-defi}
A linear map $f\in \Hom(\otimes^m(\wedge^{n-1}(\g_1\oplus \g_2))\wedge(\g_1\oplus \g_2),\g_1\oplus \g_2)$ has a {\bf bidegree} $k|l$, which
is denoted by $\|f\|=k|l$, if f satisfies the following four conditions:
\begin{enumerate}
  \item $k+l=m(n-1)$
  \item If $X$ is an element in $\g^{k+1,l}$, then $f(X)\in \g_1$
  \item If $X$ is an element in $\g^{k,l+1}$, then $f(X)\in \g_2$
  \item All the other case, $f(X)=0$.
\end{enumerate}
\end{defi}
We denote the set of linear maps of bidegree $k|l$ by $C^{k\mid l}(\g_1\oplus \g_2,\g_1\oplus \g_2)$. A linear map $f$ is said to be homogeneous if $f$ has a bidegree.

\vspace{3mm}

Let $\frak g$ be a vector space. We consider the graded vector space
$$C^{*}_{\nl}(\frak g;\frak g)=\oplus_{m\geq 0}C^{m}_{\nl}(\frak g;\frak g)=\oplus_{m\geq 0} \Hom(\otimes^m(\wedge^{n-1}\g)\wedge\frak{g},\g).$$
Then the graded vector space $C^{*}_{\nl}(\frak g;\frak g)$ equipped with the graded commutator bracket
\begin{equation}\label{14}
  [P,Q]_{\nl}=P\circ Q-(-1)^{pq}Q\circ P, \qquad\forall ~P\in C^p(\frak g,\frak g),Q\in C^q(\g,\g)
\end{equation}
is a graded Lie algebra (\cite{Rot05}), where $P\circ Q\in C^{p+q}(\frak g,\frak g)$ is defined by
\begin{eqnarray*}
&&(P\circ Q)(\mathfrak{X}_1,\cdots,\mathfrak{X}_{p+q},x)\\
&=&\sum\limits^{p}_{k=1}(-1)^{(k-1)q}\sum\limits_{\sigma\in S(k-1,q)}(-1)^{\sigma}\sum\limits^{n-1}_{i=1}P(\mathfrak{X}_{\sigma(1)},\cdots,\mathfrak{X}_{\sigma(k-1)},x^1_{k+q}\wedge\cdots\wedge x^{i-1}_{k+q}\wedge\\
&&Q(\mathfrak{X}_{\sigma(k)},\cdots,\mathfrak{X}_{\sigma(k+q-1)},x^i_{k+q})\wedge x^{i+1}_{k+q}\wedge\cdots\wedge x^{n-1}_{k+q},\mathfrak{X}_{k+q+1},\cdots,
\mathfrak{X}_{p+q},x)\\
&& +\sum\limits_{\sigma\in S(p,q)}(-1)^{pq}(-1)^{\sigma}P(\mathfrak{X}_{\sigma(1)},\cdots,\mathfrak{X}_{\sigma(p)},Q(\mathfrak{X}_{\sigma(p+1)},\cdots,\mathfrak{X}_{\sigma(p+q-1)},
\mathfrak{X}_{\sigma(p+q)},x)),
\end{eqnarray*}
for all $\mathfrak{X}_i=x^1_i\wedge\cdots\wedge x^{n-1}_i\in\wedge^{n-1}\frak g, i=1,2,\cdots,p+q$ and $x\in \g$.  In particular, $\mu:\wedge^n\g\rightarrow \g$ defines an $n$-Lie structure on $\g$ if and only if $[\mu,\mu]_{\nl}=0$, i.e. $\mu$ is a Maurer-Cartan element of
the graded Lie algebra $(C^{*}_{\nl}(\frak g;\frak g),[-,-]_{\nl})$. Moreover, the coboundary operator $\partial$ of the $n$-Lie algebra with the coefficients in the adjoint representation can be given by
$$\partial_{\ad}  f=(-1)^{p}[\mu ,f]_{\nl}, \quad \forall f \in C^p(\g,\g).$$

\emptycomment{\begin{thm}{\label{thm:Rot05}{\rm(\cite{Rot05})}
The graded vector space $C^{*}_{\nl}(\frak g;\frak g)$ equipped with the graded commutator bracket
\begin{equation}\label{14}
  [P,Q]_{\nl}=P\circ Q-(-1)^{pq}Q\circ P, \qquad\forall ~P\in C^p(\frak g,\frak g),Q\in C^q(\g,\g)
\end{equation}
is a graded Lie algebra, where $P\circ Q\in C^{p+q}(\frak g,\frak g)$ is defined by
\begin{eqnarray*}
&&(P\circ Q)(\mathfrak{X}_1,\cdots,\mathfrak{X}_{p+q},x)\\
&=&\sum\limits^{p}_{k=1}(-1)^{(k-1)q}\sum\limits_{\sigma\in S(k-1,q)}(-1)^{\sigma}\sum\limits^{n-1}_{i=1}P(\mathfrak{X}_{\sigma(1)},\cdots,\mathfrak{X}_{\sigma(k-1)},x^1_{k+q}\wedge\cdots\wedge x^{i-1}_{k+q}\wedge\\
&&Q(\mathfrak{X}_{\sigma(k)},\cdots,\mathfrak{X}_{\sigma(k+q-1)},x^i_{k+q})\wedge x^{i+1}_{k+q}\wedge\cdots\wedge x^{n-1}_{k+q},\mathfrak{X}_{k+q+1},\cdots,
\mathfrak{X}_{p+q},x)\\
&& +\sum\limits_{\sigma\in S(p,q)}(-1)^{pq}(-1)^{\sigma}P(\mathfrak{X}_{\sigma(1)},\cdots,\mathfrak{X}_{\sigma(p)},Q(\mathfrak{X}_{\sigma(p+1)},\cdots,\mathfrak{X}_{\sigma(p+q-1)},
\mathfrak{X}_{\sigma(p+q)},x)),
\end{eqnarray*}
for all $\mathfrak{X}_i=x^1_i\wedge\cdots\wedge x^{n-1}_i\in\wedge^{n-1}\frak g, i=1,2,\cdots,p+q$ and $x\in \g$.
Then $\mu:\wedge^n\g\rightarrow \g$ defines an $n$-Lie structure on $\g$ if and only if $[\mu,\mu]_{\nl}=0$, i.e. $\mu$ is a Maurer-Cartan element of
the graded Lie algebra $(C^{*}_{\nl}(\frak g;\frak g),[-,-]_{\nl})$. Moreover, the coboundary map $\partial$ of the $n$-Lie algebra with the coefficients in the adjoint representation can be given by
$$\partial  f=(-1)^{p-1}[\partial ,f]_{\rm nLie}, \quad \forall f \in C^p(\g,\g).$$
}\end{thm}}

The following lemma shows that the graded Lie algebra structure on $C^{*}_{\nl}(\frak g;\frak g)$ is compatible with the bigrading.
\begin{lem}\label{thm:bidegree}
Let $f\in C^p(\g_1\oplus \g_2,\g_1\oplus \g_2)$ and $g\in C^q(\g_1\oplus \g_2,\g_1\oplus \g_2)$ be the homogeneous linear maps with bidegrees $k_f|l_f$ and $k_g|l_g$ respectively. Then the graded Lie bracket $[f,g]_{\nl}\in C^{p+q}(\g_1\oplus \g_2,\g_1\oplus \g_2)$ is a homogeneous linear map of bidegree $(k_f+k_g)|(l_f+l_g)$.
\end{lem}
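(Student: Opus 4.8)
The plan is to prove the claim by directly tracking how the composition product $P \circ Q$ acts on elements of definite bidegree, and showing that the two summands defining $P \circ Q$ each preserve the additive bookkeeping of $\g_1$- and $\g_2$-slots. Since $[f,g]_{\nl} = f\circ g - (-1)^{pq} g\circ f$ is a difference of two such composition products, it suffices to establish that $f \circ g$ is homogeneous of bidegree $(k_f+k_g)\mid(l_f+l_g)$; the claim for $g \circ f$ follows by symmetry (swapping the roles of $f,g$), and a $\mathbb{K}$-linear combination of two maps of the same bidegree again has that bidegree, so the bracket inherits it.

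First I would verify the degree constraint, condition (1) of Definition \ref{bidegree-defi}: since $\|f\| = k_f\mid l_f$ gives $k_f + l_f = p(n-1)$ and similarly $k_g + l_g = q(n-1)$, we get $(k_f+k_g)+(l_f+l_g) = (p+q)(n-1)$, which is exactly what is required of an element of $C^{p+q}$. Next, the core step is to feed an element $X \in \g^{a,b}$ with $a+b = (p+q)(n-1)+1$ into the formula for $(f\circ g)(X)$ and count. In each term of the first sum, $g$ is applied to one ``inserted'' slot built from $q$ of the wedge-arguments $\mathfrak{X}$ together with a single tensor-factor $x^i_{k+q}$, producing an output in either $\g_1$ or $\g_2$ according to $\|g\|$; the remaining arguments are repackaged and fed, together with this output, into $f$. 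The key observation is that both the inner application of $g$ and the outer application of $f$ are nonzero only when their respective arguments carry precisely the number of $\g_1$-entries dictated by $k_g, k_f$ (and $\g_2$-entries by $l_g, l_f$). Tracking the total count of $\g_1$-slots through the inner insertion and the outer evaluation shows that $(f\circ g)(X)$ lands in $\g_1$ exactly when $a = (k_f+k_g)+1$ and $b = (l_f+l_g)$, lands in $\g_2$ exactly when $a = k_f+k_g$ and $b = (l_f+l_g)+1$, and vanishes otherwise; this is conditions (2), (3), (4). The second sum, in which $g$ is applied to the final $q$ wedge-arguments and the base point $x$ and its output is passed as the last slot to $f$, is handled by the identical counting argument.

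The main obstacle, and the place demanding the most care, is the bookkeeping in the first sum: when $g$ acts on an inserted wedge factor $x^i_{k+q}$, its single output element replaces $q(n-1)+1$ input entries, so one must carefully confirm that the count of $\g_1$- versus $\g_2$-entries entering $f$ is the total count in $X$ minus the entries consumed by $g$, plus one entry for $g$'s output of the appropriate type. Concretely, if $g$ sends its arguments into $\g_1$ it consumes $k_g+1$ copies of $\g_1$ and $l_g$ copies of $\g_2$ and returns a single $\g_1$-element, so the net effect on what $f$ sees is a reduction by $k_g$ in the $\g_1$-count and by $l_g$ in the $\g_2$-count; the symmetric statement holds when $g$ outputs into $\g_2$. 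Requiring $f$ to then be nonvanishing forces the original bidegree of $X$ to match $(k_f+k_g)\mid(l_f+l_g)$ in the manner stated. The skew-symmetrization over shuffles $\sigma \in S(k-1,q)$ and the signs $(-1)^\sigma$, $(-1)^{(k-1)q}$ do not affect which $\g_i$ the output lands in, so they may be carried along passively throughout the counting.
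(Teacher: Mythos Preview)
Your proposal is correct and follows exactly the approach the paper has in mind: the paper's proof reads simply ``It follows by a direct calculation,'' and what you have written is precisely that direct calculation spelled out---verify condition (1) of Definition~\ref{bidegree-defi} by adding degrees, then track the $\g_1$/$\g_2$ counts through the inner application of $g$ and the outer application of $f$ in each summand of $f\circ g$. Your bookkeeping (in particular the observation that whether $g$ outputs into $\g_1$ or $\g_2$, the net effect on the counts seen by $f$ is the same reduction by $k_g$ and $l_g$) is the heart of the matter and is handled correctly.
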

\begin{proof}
  It follows by a direct calculation.
\end{proof}

\subsection{Maurer-Cartan characterizations of $n$-\LP pairs}
\begin{defi}
An {\bf $n$-\LP pair} is a pair of an $n$-Lie algebra $(\g,[-,\cdots,-]_\g)$ together with a representation $\rho$ on $V$. We denote an $n$-\LP pair by $(\g,[-,\cdots,-]_{\g};\rho)$, or simply $(\g;\rho)$.
\end{defi}
For $2$-\LP pair, we usually call it  \LP pair for short.

Let $(\frak g,[-,\cdots,-]_{\frak g};\rho)$ be an $n$-LieRep pair. Usually we will also use $\mu$ to indicate the $n$-Lie bracket $[-,\cdots,-]_{\frak g}$. Then $\mu+\rho$ corresponds to the semidirect product $n$-Lie algebra structure on $\g\oplus V$ given by
\begin{equation}\label{15}
  [x_1+u_1,\cdots,x_n+u_n]_{\rho}=[x_1,\cdots,x_n]_{\frak g}+\sum\limits^{n}_{i=1}(-1)^{n-i}\rho(x_1,\cdots,\widehat{x_i},\cdots,x_n)u_i.
\end{equation}
 We denote the semidirect product $n$-Lie algebra  by $\g\ltimes_{\rho} V$. Since $\mu\in \Hom(\wedge^n \g,\g)$ and $\rho\in \Hom(\wedge^{n-1} \g\otimes V,V)$, we have $\|\mu\|=n-1|0$ and $\|\rho\|=n-1|0$. Thus $\mu+\rho\in C^{n-1\mid 0}_{\nl}(\g\oplus V,\g\oplus V)$.

\begin{pro}
  Let $\g$ and $V$ be two vector spaces. Then $(\oplus_{k=0}^{+\infty}C^{k(n-1)\mid 0}(\g\oplus V,\g\oplus V),[-,-]_{\nl})$ is a graded Lie algebra. Its Maurer-Cartan element are precisely $n$-\LP pairs.
\end{pro}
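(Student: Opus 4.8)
The plan is to establish the two assertions separately. Write $L:=\oplus_{k=0}^{+\infty}C^{k(n-1)\mid 0}(\g\oplus V,\g\oplus V)$. The first claim is that $L$ is a graded Lie subalgebra of the graded Lie algebra $(C^{*}_{\nl}(\g\oplus V,\g\oplus V),[-,-]_{\nl})$, and the second is that its Maurer-Cartan elements are exactly the $n$-\LP pairs. For the first claim, observe that $C^{k(n-1)\mid 0}(\g\oplus V,\g\oplus V)$ is a subspace of the homogeneous piece $C^{k}(\g\oplus V,\g\oplus V)$, so $L$ is a graded subspace and automatically inherits graded antisymmetry and the graded Jacobi identity from the ambient graded Lie algebra. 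Hence it suffices to check that $L$ is closed under $[-,-]_{\nl}$, and this is immediate from Lemma \ref{thm:bidegree}: if $f\in C^{k_1(n-1)\mid 0}$ and $g\in C^{k_2(n-1)\mid 0}$ have bidegrees $k_1(n-1)\mid 0$ and $k_2(n-1)\mid 0$, then $[f,g]_{\nl}$ is homogeneous of bidegree $(k_1+k_2)(n-1)\mid 0$ and lies in $C^{k_1+k_2}$, hence in $C^{(k_1+k_2)(n-1)\mid 0}\subseteq L$.

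For the second claim I would first describe the degree-one part $C^{(n-1)\mid 0}$. By Definition \ref{bidegree-defi}, an element $\pi\in C^{(n-1)\mid 0}$ is $\g$-valued on all-$\g$ inputs, is $V$-valued on inputs with exactly one $V$-entry, and vanishes on inputs with two or more $V$-entries. This is precisely the data of a skew-symmetric map $\mu:\wedge^{n}\g\to\g$ and a map $\rho:\wedge^{n-1}\g\otimes V\to V$, so every such $\pi$ has the form $\pi=\hat\mu+\hat\rho$ and corresponds to the bracket \eqref{15} on $\g\oplus V$.

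A Maurer-Cartan element of $L$ is then a degree-one $\pi=\hat\mu+\hat\rho$ with $[\pi,\pi]_{\nl}=0$. Since the bracket on $L$ is the restriction of the one on $C^{*}_{\nl}(\g\oplus V,\g\oplus V)$, this is the Maurer-Cartan equation in the ambient algebra, which by the characterization recalled above holds if and only if $\pi$ defines an $n$-Lie algebra structure on $\g\oplus V$, namely the semidirect product $\g\ltimes_\rho V$ of \eqref{15}. I would then unwind $\pi\circ\pi=0$ by bidegree $2(n-1)\mid 0$. On all-$\g$ inputs only $\hat\mu\circ\hat\mu$ survives (any term involving $\hat\rho$ needs a $V$-entry), so this component recovers $\mu\circ\mu=0$, i.e. $(\g,\mu)$ is an $n$-Lie algebra. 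On inputs with one $V$-entry the only contributions come from the terms with $\hat\rho$ on the outside, namely $\hat\rho\circ\hat\mu$ and $\hat\rho\circ\hat\rho$ (the term $\hat\mu\circ\hat\rho$ vanishes because $\hat\mu$ annihilates any $V$-valued input), and the vanishing of this component is equivalent to the two representation axioms \eqref{eq:rep1} and \eqref{eq:rep2}. Thus $[\pi,\pi]_{\nl}=0$ holds precisely when $(\g,\mu;\rho)$ is an $n$-\LP pair, giving the desired bijection.

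The main obstacle is this last step: checking that the one-$V$-entry component of $\pi\circ\pi$ reproduces exactly \eqref{eq:rep1}--\eqref{eq:rep2} rather than a sign-twisted variant, which is a bookkeeping computation involving the shuffle signs in the definition of $P\circ Q$ and the careful tracking of which term lands in which bidegree region. This can be bypassed by invoking the standard fact that the bracket \eqref{15} satisfies the $n$-Lie identity if and only if $(\g,\mu)$ is an $n$-Lie algebra and $\rho$ is a representation, so that the Maurer-Cartan characterization of $n$-Lie structures on $\g\oplus V$ performs all the work; in either presentation the algebraic core is exactly this equivalence between the semidirect-product condition and the representation axioms.
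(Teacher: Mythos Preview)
Your proposal is correct and follows essentially the same approach as the paper: closure under the bracket via Lemma~\ref{thm:bidegree}, then identification of Maurer-Cartan elements with $n$-\LP pairs by analyzing $[\mu+\rho,\mu+\rho]_{\nl}$. The only difference is presentational: where you argue structurally by bidegree (all-$\g$ inputs give the $n$-Lie identity for $\mu$, one-$V$-entry inputs give the representation axioms) and offer the semidirect-product shortcut, the paper simply writes out $\frac{1}{2}[\mu+\rho,\mu+\rho]_{\nl}$ on generic inputs $(x_i+u_i,y_j+v_j)$ and reads off exactly those conditions term by term.
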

\begin{proof}
 For $f\in C^{k(n-1)\mid 0}(\g_1\oplus \g_2,\g_1\oplus \g_2)$ and $g\in C^{l(n-1)\mid 0}(\g_1\oplus \g_2,\g_1\oplus \g_2)$, by Lemma \ref{thm:bidegree}, we have $||[f,\g]_{\nl}||=(k+l)(n-1)| 0$. Thus $(\oplus_{k=0}^{+\infty}C^{k(n-1)\mid 0}(\g\oplus V,\g\oplus V),[-,-]_{\nl})$ is a graded Lie subalgebra of $(C^*(\g\oplus V,\g\oplus V),[-,-]_{\nl})$.

 By a direct calculation, we have
\begin{eqnarray*}
&&\frac{1}{2}[\mu+\rho,\mu+\rho]_{\nl}\big{(x_1+u_1,\cdots,x_{n-1}+u_{n-1},y_1+v_1,\cdots,y_{n-1}+v_{n-1},y_n+v_n)}\\
&=&\sum\limits_{i=1}^n(\mu+\rho)(y_1+v_1,\cdots,y_{i-1}+v_{i-1},(\mu+\rho)(x_1+u_1,\cdots,x_{n-1}+u_{n-1},y_i+v_i),y_{i+1}+v_{i+1},\cdots,y_n+v_n)\\
&&-(\mu+\rho)\big{(x_1+u_1,\cdots,x_{n-1}+u_{n-1},(\mu+\rho)(y_1+v_1,\cdots,y_{n-1}+v_{n-1},y_n+v_n)})\\
&=&\sum\limits_{i=1}^n[y_1,\cdots,y_{i-1},[x_1,\cdots,x_{n-1},y_i]_\g,y_{i+1},\cdots,y_n]_\g-[x_1,\cdots,x_{n-1},[y_1,\cdots,y_{n-1},y_n]_\g]_\g\\
&&+\sum\limits_{k=1}^n(-1)^{n-k}\rho(y_1,\cdots,\hat{y_k},\cdots,y_n)
\rho(x_1,\cdots,x_{n-1})(v_k)-\sum\limits_{k=1}^n(-1)^{n-k}\rho(x_1,\cdots,x_{n-1})\rho(y_1,\cdots,\hat{y_k},\cdots,y_n)(v_k)\\
&&+\sum\limits_{i=1}^n\sum\limits_{k\neq i,k=1}^n(-1)^{n-k}\rho(y_1,\cdots,y_{i-1},[x_1,\cdots,x_{n-1},y_i]_\g,\cdots,\hat{y_k},\cdots,y_n)(v_k)\\
&&+\sum\limits_{i=1}^n(-1)^{n-i}\sum\limits_{j=1}^{n-1}(-1)^{n-j}\rho(y_1,\cdots,\hat{y_i},\cdots,y_n)\rho(x_1,\cdots,\hat{x_j},\cdots,x_{n-1},y_i)(u_j)\\
&&-\sum\limits_{j=1}^{n-1}(-1)^{n-j}\rho(x_1,\cdots,\hat{x_j},\cdots,x_{n-1},[y_1,\cdots,y_n]_\g)(u_j).
\end{eqnarray*}
Thus $[\mu+\rho,\mu+\rho]=0$ if and only if $\mu$ defines an $n$-Lie algebra structure and $\rho$ is a representation of $(\g,\mu)$ on $V$.
\end{proof}

Let $(\g,\mu;\rho)$ be an $n$-\LP pair. Note that $\pi=\mu+\rho$ is the Maurer-Cartan element of the graded Lie algebra $(\oplus_{k=0}^{+\infty}C^{k(n-1)\mid 0}(\g\oplus V,\g\oplus V),[-,-]_{\nl})$, by the graded Jacobi identity,  $\dM_\pi:=[\pi,-]_{\nl}$ is a graded derivation of the graded Lie algebra $(\oplus_{k=0}^{+\infty}C^{k(n-1)\mid 0}(\g\oplus V,\g\oplus V),[-,-]_{\nl})$ and satisfies $\dM_\pi^2=0$. Thus we have
\begin{pro}
  Let $(\g,\mu;\rho)$ be an $n$-\LP pair. Then $(\oplus_{k=0}^{+\infty}C^{k(n-1)\mid 0}(\g\oplus V,\g\oplus V),[-,-]_{\nl},\dM_\pi)$ is a differential graded Lie algebra. Furthermore, $(\g,\mu+\mu';\rho+\rho')$ is also an $n$-\LP pair for $\mu'\in \Hom(\wedge^n \g,\g)$ and $\rho'\in \Hom(\wedge^{n-1} \g\otimes V,V)$ if and only if $\mu'+\rho'$ is a Maurer-Cartan element of the differential graded Lie algebra $(\oplus_{k=0}^{+\infty}C^{k(n-1)\mid 0}(\g\oplus V,\g\oplus V),[-,-]_{\nl},\dM_\pi)$.
\end{pro}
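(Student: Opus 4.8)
The plan is to treat the two assertions separately, leaning on the graded Lie subalgebra $(\oplus_{k=0}^{+\infty}C^{k(n-1)\mid 0}(\g\oplus V,\g\oplus V),[-,-]_{\nl})$ and the Maurer-Cartan characterization of $n$-\LP pairs provided by the preceding proposition together with Lemma \ref{thm:bidegree}. For the differential graded Lie algebra structure the essential facts are already recorded in the paragraph before the statement, so I would simply assemble and verify them. Writing $\pi=\mu+\rho$, the hypothesis that $(\g,\mu;\rho)$ is an $n$-\LP pair gives $[\pi,\pi]_{\nl}=0$; the graded Jacobi identity then shows at once that $\dM_\pi=[\pi,-]_{\nl}$ is a degree $+1$ graded derivation and, since $\pi$ is an odd (degree one) element, that $\dM_\pi^2(f)=[\pi,[\pi,f]_{\nl}]_{\nl}=\half[[\pi,\pi]_{\nl},f]_{\nl}=0$. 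The point deserving explicit attention is that $\dM_\pi$ preserves the subalgebra: because $\|\pi\|=(n-1)\mid 0$, Lemma \ref{thm:bidegree} gives $\|[\pi,f]_{\nl}\|=(k+1)(n-1)\mid 0$ for $f\in C^{k(n-1)\mid 0}$, so $\dM_\pi$ sends $C^{k(n-1)\mid 0}$ into $C^{(k+1)(n-1)\mid 0}$ and the triple is indeed a differential graded Lie algebra.

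For the second assertion I would set $\pi=\mu+\rho$ and $\pi'=\mu'+\rho'$ and expand the Maurer-Cartan equation by bilinearity. The hypotheses $\mu'\in\Hom(\wedge^n\g,\g)$ and $\rho'\in\Hom(\wedge^{n-1}\g\otimes V,V)$ guarantee, exactly as for $\mu+\rho$, that $\|\pi'\|=(n-1)\mid 0$, so $\pi'$ is again an odd element and graded antisymmetry yields $[\pi,\pi']_{\nl}=[\pi',\pi]_{\nl}$. Using $[\pi,\pi]_{\nl}=0$ I would then obtain
\begin{equation*}
[\pi+\pi',\pi+\pi']_{\nl}=[\pi,\pi]_{\nl}+2[\pi,\pi']_{\nl}+[\pi',\pi']_{\nl}=2\dM_\pi(\pi')+[\pi',\pi']_{\nl}.
\end{equation*}
Consequently $\dM_\pi(\pi')+\half[\pi',\pi']_{\nl}=0$ --- the statement that $\pi'$ is a Maurer-Cartan element of the differential graded Lie algebra --- holds if and only if $[\pi+\pi',\pi+\pi']_{\nl}=0$.

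To close the argument I would invoke the preceding proposition: since $(\mu+\mu')+(\rho+\rho')=\pi+\pi'$ with $\mu+\mu'\in\Hom(\wedge^n\g,\g)$ and $\rho+\rho'\in\Hom(\wedge^{n-1}\g\otimes V,V)$, the equation $[\pi+\pi',\pi+\pi']_{\nl}=0$ holds precisely when $(\g,\mu+\mu';\rho+\rho')$ is an $n$-\LP pair. Chaining the two equivalences gives the claim. I do not anticipate a genuine obstacle: the whole argument is formal once the conventions are fixed. The only place demanding care is the bookkeeping of degrees and parities --- confirming that $\pi$ and $\pi'$ are both odd so that the cross terms combine as $2[\pi,\pi']_{\nl}$ rather than cancel, and checking through Lemma \ref{thm:bidegree} that every bracket stays inside $\oplus_{k}C^{k(n-1)\mid 0}$, so that the Maurer-Cartan equation is solved in the correct subalgebra.
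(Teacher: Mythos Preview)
Your proposal is correct and is precisely the standard unpacking of what the paper records only as ``It follows by a direct calculation.'' You have spelled out exactly the ingredients the paper tacitly invokes (the graded Jacobi identity for $\dM_\pi^2=0$, Lemma~\ref{thm:bidegree} for preservation of the bidegree, and the preceding proposition for the Maurer-Cartan characterization of $n$-\LP pairs), so there is nothing to add or compare.
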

\begin{proof}
  It follows by a direct calculation.
\end{proof}

\section{Maurer-Cartan characterization of relative Rota-Baxter operators on $n$-\LP pairs}\label{sec:MC-characterization RB-operators}
In this section, we apply higher derived brackets introduced by Voronov in \cite{Vor05} to construct the Lie $n$-algebra that characterizes relative Rota-Baxter operators as Maurer-Cartan elements.
\subsection{$L_{\infty}$-algebras, Lie $n$-algebras and higher derived brackets}

A permutation $\sigma\in S_n$ is called an $(i,n-i)$-shuffle if $\sigma(1)<\cdots<\sigma(i)$ and $\sigma(i+1)<\cdots<\sigma(n)$. If $i=0$ or $n$ we assume $\sigma=id$. The set of all $(i,n-i)$-shuffles will be denoted by $S(i,n-i)$.
\begin{defi}{\rm(\cite{Stash92})}
An $L_{\infty}$-algebra is a $\Integ$-graded vector space $\frak g=\oplus_{k\in Z}\frak g^k$ equipped with a collection $(k\geq1)$ of linear maps $l_k:\otimes^k\frak g\rightarrow\frak g$ of degree 1 with the property that, for any  homogeneous elements $x_1,\cdots,x_n\in \frak g$, we have
\begin{enumerate}
\item[\rm(i)] for every $\sigma\in S_n$,
$$l_n(x_{\sigma(1)},\cdots,x_{\sigma(n-1)},x_{\sigma(n)})=\varepsilon(\sigma)l_n(x_1,\cdots,x_{n-1},x_n),$$
\item[\rm(ii)]for all $n\geq 1$,
\begin{equation}\label{eq:general-JI}
\sum\limits_{i=1}^n\sum\limits_{\sigma\in S(i,n-i)}\varepsilon(\sigma)l_{n-i+1}(l_i(x_{\sigma(1)},\cdots,x_{\sigma(i)}),x_{\sigma(i+1)},\cdots,x_{\sigma(n)})=0.
\end{equation}
\end{enumerate}
\end{defi}

The notion of a Lie $n$-algebra was introduced in \cite{HW95}. A Lie $n$-algebra is a special $L_{\infty}$-algebra, in which only $n$-ary bracket is nonzero.
\begin{defi}
A Lie $n$-algebra is a $\Integ$-graded vector space $\frak g=\oplus_{k\in \Integ}\frak g^k$ equipped with an $n$-multilinear bracket $\{-,\cdots,-\}$ of degree $1$ satisfying,
\begin{enumerate}
  \item[\rm(i)]for all homogeneous elements $x_1,\cdots,x_n\in \frak g,$
  \begin{equation}\label{6}
{\{x_1,x_2,\cdots,x_n\}_{\frak g}}=\varepsilon(\sigma)\{x_{\sigma(1)},\cdots,x_{\sigma(n-1)},x_{\sigma(n)}\}_{\frak g},
\end{equation}
  \item[\rm(ii)] for all homogeneous elements $x_i\in\frak g, 1\leq i\leq 2n-1$
  \begin{equation}\label{7}\sum\limits_{\sigma\in S(n,n-1)}\varepsilon(\sigma)\{\{x_{\sigma(1)},\cdots,x_{\sigma(n)}\}_{\frak g},x_{\sigma(n+1)},\cdots,x_{\sigma(2n-1)}\}_{\frak g}=0.
  \end{equation}
\end{enumerate}

\end{defi}
Recall that the desuspension operator $s^{-1}$ is defined by mapping a graded vector space to a copy of itself shifted down  by $1$, i.e. $(s^{-1}\g)^i:=\g^{i+1}$.

\begin{lem}
Let $(\g,[-,-])$ be a graded Lie algebra. Then $(s^{-1}\g,\{-,-\})$ is a Lie $2$-algebra, where $\{s^{-1}x,s^{-1}y\}=(-1)^{|x|}s^{-1}[x,y]$ for homogenous elements $x,y\in\g$.	
\end{lem}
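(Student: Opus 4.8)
The plan is to verify directly that the shifted bracket $\{-,-\}$ satisfies the two axioms of a Lie $2$-algebra: graded skew-symmetry (equation \eqref{6} with $n=2$) and the generalized Jacobi identity (equation \eqref{7} with $n=2$, which is a $3$-term relation on elements of the shifted space). Throughout I would keep careful track of the degree shift: if $x\in\g^i$, then $s^{-1}x\in(s^{-1}\g)^{i-1}$, so $|s^{-1}x|=|x|-1$, and the defining formula $\{s^{-1}x,s^{-1}y\}=(-1)^{|x|}s^{-1}[x,y]$ must raise total degree by $1$, which is the first consistency check I would record.

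First I would check graded skew-symmetry. Using the defining formula and the graded antisymmetry of the Lie bracket $[x,y]=-(-1)^{|x||y|}[y,x]$, I would compute $\{s^{-1}x,s^{-1}y\}$ and $\{s^{-1}y,s^{-1}x\}$ and compare. The sign bookkeeping here is the delicate point: I must express everything in terms of the shifted degrees $|s^{-1}x|=|x|-1$ and confirm that the resulting sign matches $\varepsilon(\sigma)$ for the transposition $\sigma$ acting on degree-shifted elements, i.e. the Koszul sign $-(-1)^{|s^{-1}x||s^{-1}y|}$. This reduces to a short parity computation relating $(-1)^{|x|}$, $(-1)^{|y|}$, $(-1)^{|x||y|}$ and $(-1)^{(|x|-1)(|y|-1)}$.

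Next I would verify the Jacobi-type identity \eqref{7}. For $n=2$ this is a sum over $(2,1)$-shuffles of three homogeneous elements $s^{-1}x,s^{-1}y,s^{-1}z$, producing three terms of the form $\{\{s^{-1}x,s^{-1}y\},s^{-1}z\}$ with appropriate shuffle signs. I would expand each nested bracket using the definition twice, which introduces a product of two sign factors (one from the inner $[-,-]$ and one from the outer), then collect the three terms. The goal is to show the total equals $s^{-1}$ applied to a signed cyclic sum of double brackets $[[x,y],z]$, which vanishes precisely by the graded Jacobi identity of $\g$. The main obstacle, as usual with desuspension arguments, is confirming that all the Koszul signs—those from the shuffle permutations $\varepsilon(\sigma)$, those from the two applications of the twisting sign $(-1)^{|\cdot|}$, and those implicit in the graded Jacobi identity of $\g$—combine coherently so that the three terms reassemble into exactly the graded Jacobiator of $\g$ with no residual sign discrepancy.

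I expect the computation to be mechanical but sign-sensitive, so I would organize it by fixing homogeneous $x,y,z\in\g$, writing the three shuffle terms explicitly, and reducing each to the form $\pm\,s^{-1}[[\,\cdot,\cdot\,],\cdot\,]$ before summing. No step requires anything beyond the graded Jacobi identity and graded antisymmetry already available for $(\g,[-,-])$; the content of the lemma is entirely that the degree shift by $s^{-1}$ converts the degree-$0$ graded Lie bracket into a degree-$1$ binary bracket while preserving the identities, so the proof is a verification rather than a construction.
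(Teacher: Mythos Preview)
The paper states this lemma without proof, so there is nothing to compare against; your direct-verification strategy is the standard and essentially only approach. There is, however, a sign-convention slip that would cause the first step to fail as you have written it. In the paper's conventions (inherited from its $L_\infty$-algebra definition, where the $l_k$ carry degree $+1$), the brackets are graded \emph{symmetric}: equation \eqref{6} requires $\{a,b\}=\varepsilon(\sigma)\{b,a\}$ with $\varepsilon(\sigma)$ the pure Koszul sign $(-1)^{|a|\,|b|}$, with no extra $(-1)^\sigma$. You describe the axiom as ``graded skew-symmetry'' and identify $\varepsilon(\sigma)$ with $-(-1)^{|s^{-1}x|\,|s^{-1}y|}$, which is the antisymmetric (degree-$0$) convention; if you chase that target sign the parity check does not close.

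Once you aim for graded symmetry the computation does work: the required identity $(-1)^{|x|}=(-1)^{(|x|-1)(|y|-1)}\cdot(-1)^{|y|}\cdot\bigl(-(-1)^{|x||y|}\bigr)$ holds, and similarly the three $(2,1)$-shuffle terms in \eqref{7}, each weighted by the Koszul sign alone, combine to $s^{-1}$ of the graded Jacobiator of $(\g,[-,-])$. With this correction your plan is complete.
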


\begin{defi}
\begin{enumerate}[\rm(i)]
\item A Maurer-Cartan element of an $L_{\infty}$-algebra $(\frak g,\{l_i\}^{+\infty}_{i=1})$ is an element $\alpha\in \frak g^{0}$ satisfying the Maurer-Cartan equation \begin{equation}\label{9}\sum\limits^{+\infty}_{n=1}\frac{1}{n!}l_n(\alpha,\cdots,\alpha)=0.\end{equation}
\item A Maurer-Cartan element of an Lie $n$-algebra $(\frak g,\{-,\cdots,-\})$ is an element $\alpha\in \frak g^{0}$ satisfying the Maurer-Cartan equation \begin{equation}\label{10}
    \frac{1}{n!}\{\alpha,\cdots,\alpha\}_\g=0.
    \end{equation}

\end{enumerate}
\end{defi}

Let $\alpha$ be a Maurer-Cartan element of a Lie $n$-algebra $(\frak g,\{-,\cdots,-\})$. For all $k\geq1$ and $x_1,\cdots,x_n\in\frak g$. Define $l^{\alpha}_k:\otimes^k\frak g\rightarrow \frak g$ by
\begin{eqnarray}
  l^{\alpha}_k(x_1,\cdots,x_k) &=&  \frac{1}{(n-k)!}\{\underbrace{\alpha,\cdots,\alpha}_{n-k},x_1,\cdots,x_k\}_{\frak g},  \quad \forall~k\leq n,  \\
  l_k &=& 0, \quad \forall~k\geq n+1.
\end{eqnarray}

\begin{thm}{\rm(\cite{Get09})}
With the above notation, $(\frak g,l^{\alpha}_1,\cdots,l^{\alpha}_n)$ is an $L_{\infty}$-algebra, obtained from the Lie $n$-algebra $(\frak g,\{-,\cdots,-\})$ by twisting with the Maurer-Cartan element $\alpha$. Moreover, $\alpha+\alpha'$ is a Maurer-Cartan element of $(\frak g,\{-,\cdots,-\})$ if and only if $\alpha'$ is a Maurer-Cartan element of the twisted $L_{\infty}$-algebra $(\frak g,l^{\alpha}_1,\cdots,l^{\alpha}_n)$.
\end{thm}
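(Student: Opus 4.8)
The plan is to deduce both claims from a single source, the higher Jacobi identity \eqref{7} of the Lie $n$-algebra $(\frak g,\{-,\cdots,-\})$, supplemented by the Maurer-Cartan equation \eqref{10} for $\alpha$. The conceptual point is that each twisted bracket $l^\alpha_k$ is nothing but the $n$-ary bracket with $n-k$ of its slots frozen to $\alpha$, so every $L_\infty$-relation \eqref{eq:general-JI} among the $l^\alpha_k$ ought to be a bookkeeping rearrangement of \eqref{7} evaluated on an argument list in which several slots have been frozen to $\alpha$.

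For the first assertion I would check \eqref{eq:general-JI} for the family $(l^\alpha_1,\cdots,l^\alpha_n)$ on a fixed tuple of homogeneous inputs $x_1,\cdots,x_N$, separately for each $N$ (when $N>2n-1$ every term vanishes since $l^\alpha_k=0$ for $k\geq n+1$). First I substitute $l^\alpha_k(y_1,\cdots,y_k)=\frac{1}{(n-k)!}\{\underbrace{\alpha,\cdots,\alpha}_{n-k},y_1,\cdots,y_k\}_{\frak g}$ into each composite $l^\alpha_{N-i+1}(l^\alpha_i(x_{\sigma(1)},\cdots,x_{\sigma(i)}),x_{\sigma(i+1)},\cdots,x_{\sigma(N)})$ of \eqref{eq:general-JI}. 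A composite whose inner bracket absorbs $i$ of the inputs unfolds into a nested double application of the single $n$-ary bracket, carrying $n-i$ copies of $\alpha$ and $x_{\sigma(1)},\cdots,x_{\sigma(i)}$ in the inner slot and the remaining $n-N+i-1$ copies of $\alpha$ and $x_{\sigma(i+1)},\cdots,x_{\sigma(N)}$ in the outer slot — in total $2n-1$ entries, of which $2n-1-N$ are copies of $\alpha$. I then compare this against the higher Jacobi identity \eqref{7} evaluated on the list $(\underbrace{\alpha,\cdots,\alpha}_{2n-1-N},x_1,\cdots,x_N)$. Grouping the shuffle sum in \eqref{7} by which subset of the $x_j$ lands in the inner $n$-ary bracket, the number of shuffles producing a fixed such subset of size $i$ is $\binom{2n-1-N}{n-i}=\frac{(2n-1-N)!}{(n-i)!\,(n-N+i-1)!}$, because the indistinguishable $\alpha$'s may be apportioned freely between inner and outer slots. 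The denominators $(n-i)!$ and $(n-N+i-1)!$ are precisely the factorial prefactors carried by $l^\alpha_i$ and $l^\alpha_{N-i+1}$, so after cancellation \eqref{7} reproduces \eqref{eq:general-JI} up to the overall nonzero factor $(2n-1-N)!$. The only mismatch is the family of terms of \eqref{7} whose inner bracket receives no $x_j$ at all, i.e. equals $\{\underbrace{\alpha,\cdots,\alpha}_{n}\}_{\frak g}$; these have no counterpart in \eqref{eq:general-JI} and vanish identically by \eqref{10}. Hence \eqref{7} together with \eqref{10} forces all the relations \eqref{eq:general-JI}, so $(\frak g,l^\alpha_1,\cdots,l^\alpha_n)$ is an $L_\infty$-algebra.

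The second assertion is then a short direct computation, independent of the first. Expanding the Lie $n$-algebra Maurer-Cartan equation \eqref{10} at $\alpha+\alpha'$ by multilinearity and graded symmetry, and collecting terms according to the number $k$ of slots occupied by $\alpha'$, I obtain
\[
\frac{1}{n!}\{\alpha+\alpha',\cdots,\alpha+\alpha'\}_{\frak g}=\sum_{k=0}^{n}\frac{1}{n!}\binom{n}{k}\{\underbrace{\alpha,\cdots,\alpha}_{n-k},\underbrace{\alpha',\cdots,\alpha'}_{k}\}_{\frak g}=\sum_{k=0}^{n}\frac{1}{k!}\,l^\alpha_k(\underbrace{\alpha',\cdots,\alpha'}_{k}),
\]
where I used $\frac{1}{n!}\binom{n}{k}=\frac{1}{k!\,(n-k)!}$ and the definition of $l^\alpha_k$. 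The $k=0$ term is $\frac{1}{n!}\{\alpha,\cdots,\alpha\}_{\frak g}=0$ since $\alpha$ is Maurer-Cartan, and as $l^\alpha_k=0$ for $k>n$ the surviving sum $\sum_{k\geq 1}\frac{1}{k!}l^\alpha_k(\alpha',\cdots,\alpha')$ is exactly the left-hand side of the $L_\infty$ Maurer-Cartan equation \eqref{9} for $\alpha'$ in the twisted algebra. Therefore $\alpha+\alpha'$ solves \eqref{10} if and only if $\alpha'$ solves \eqref{9}.

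The step I expect to be most delicate is the sign bookkeeping in the first assertion: I must confirm that the Koszul sign $\varepsilon(\sigma)$ attached to a shuffle of $(\alpha,\cdots,\alpha,x_1,\cdots,x_N)$ in \eqref{7} reduces exactly to the sign $\varepsilon(\tau)$ of the corresponding shuffle of $x_1,\cdots,x_N$ in \eqref{eq:general-JI}. This is where the degree of $\alpha$ matters: since $\alpha\in\frak g^{0}$ has degree $0$, transposing an $\alpha$ past any homogeneous element is sign-free, so both the reordering of $\alpha$'s among themselves and the passage from $\sigma$ to $\tau$ contribute no extra signs, and the multiplicity count above is genuinely sign-clean. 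A cleaner but less self-contained alternative would be to encode the Lie $n$-algebra as a square-zero coderivation $Q$ on the symmetric coalgebra $S(\frak g[1])$ and observe that twisting by $\alpha$ amounts to conjugating $Q$ by the automorphism $e^{\alpha}$, which is automatically square-zero; I would keep the explicit combinatorial argument as the primary route and use the coalgebra description only as a consistency check.
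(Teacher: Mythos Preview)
The paper does not supply a proof of this theorem; it is quoted from \cite{Get09} as a standard fact about twisting $L_\infty$-structures by Maurer-Cartan elements, so there is no argument in the paper to compare against.

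On its own merits your proposal is correct. For the first assertion, the reduction of the $L_\infty$-relation \eqref{eq:general-JI} on $N$ inputs to the higher Jacobi identity \eqref{7} evaluated on $(\underbrace{\alpha,\ldots,\alpha}_{2n-1-N},x_1,\ldots,x_N)$ goes through exactly as you describe: grouping the $(n,n-1)$-shuffles by which $x_j$'s land in the inner bracket produces the multiplicity $\binom{2n-1-N}{n-i}=\frac{(2n-1-N)!}{(n-i)!\,(n-N+i-1)!}$, and the denominators cancel against the factorial prefactors of $l^\alpha_i$ and $l^\alpha_{N-i+1}$, leaving a uniform nonzero factor $(2n-1-N)!$. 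The sign check you flag is indeed the only delicate point, and your resolution is right: since $\alpha\in\frak g^0$, any transposition involving an $\alpha$ contributes trivially to the Koszul sign, so both the multiplicity count and the identification $\varepsilon(\tau)=\varepsilon(\sigma)$ are sign-clean, and moving the inner bracket past the outer $\alpha$'s costs nothing. The residual $i=0$ term with inner bracket $\{\alpha,\ldots,\alpha\}_{\frak g}$ vanishes by \eqref{10}. The binomial expansion for the second assertion is straightforward and correct. Your alternative via the square-zero coderivation on $S(\frak g[1])$ and conjugation by $e^\alpha$ is essentially how Getzler frames the general statement in the cited reference, so it is an appropriate consistency check.
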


\begin{defi}{\rm(\cite{Vor05})}
A {\bf $V$-data} is a quadruple $((L,[-,-]),\mathfrak h,{\rm P},\Delta)$, where $(L,[-,-])$ is a graded Lie algebra, $\mathfrak h$ is an abelian graded Lie subalgebra of $(L,[-,-])$, ${\rm P}: L\rightarrow L$ is a projection whose image is $\mathfrak h$ and kernel is a graded Lie subalgebra of $(L,[-,-])$ and $\Delta$ is an element in $\Ker~({\rm P})^1$ satisfying  $[\Delta,\Delta]=0$.
\end{defi}

\begin{thm}\label{th:V-data-L-infty}{\rm(\cite{Vor05})}{
Let $((L,[-,-]),\mathfrak h,{\rm P},\Delta)$ be a V-data. Then $(\mathfrak h,\{l_k\}^{+\infty}_{k=1})$ is an $L_{\infty}$-algebra where
\begin{equation}\label{12}
  l_k(a_1,\cdots,a_k)={\rm P}[\cdots[[\Delta,a_1],a_2],\cdots,a_k]\end{equation}
  for homogeneous $a_1,\cdots,a_k\in \mathfrak h$.
}\end{thm}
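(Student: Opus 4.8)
The plan is to verify the two $L_\infty$ axioms of the definition directly from the four axioms of a $V$-data. Throughout I write $\mathcal{D}:=[\Delta,-]\colon L\to L$; since $[\Delta,\Delta]=0$ and $[-,-]$ obeys the graded Jacobi identity, $\mathcal{D}$ is a degree-$1$ derivation with $\mathcal{D}^2=\tfrac12[[\Delta,\Delta],-]=0$. I will also repeatedly use three consequences of the axioms: (a) $\mathfrak h$ is abelian, so $[a,b]=0$ for $a,b\in\mathfrak h$; (b) $\Delta\in\Ker({\rm P})$ and $\Ker({\rm P})$ is a graded Lie subalgebra, so a bracket of elements of $\Ker({\rm P})$ stays in $\Ker({\rm P})$ and is annihilated by ${\rm P}$; and (c) since ${\rm P}$ is a projection onto $\mathfrak h$, the map $\id-{\rm P}$ projects onto $\Ker({\rm P})$, and $[x,a]=[(\id-{\rm P})x,a]$ for every $x\in L$ and $a\in\mathfrak h$, because $[{\rm P}x,a]=0$ by (a). Abbreviate $\lambda_k(a_1,\dots,a_k):=[\cdots[[\Delta,a_1],a_2],\dots,a_k]$, so that $l_k={\rm P}\circ\lambda_k$.

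The graded symmetry axiom is immediate from (a). For any $Y\in L$ and $a_j,a_{j+1}\in\mathfrak h$, the graded Jacobi identity gives $[[Y,a_j],a_{j+1}]-(-1)^{|a_j||a_{j+1}|}[[Y,a_{j+1}],a_j]=[Y,[a_j,a_{j+1}]]$, and the right-hand side vanishes since $[a_j,a_{j+1}]=0$. Hence every adjacent transposition of the arguments of $\lambda_k$ merely introduces the corresponding Koszul sign; as adjacent transpositions generate $S_k$, both $\lambda_k$ and $l_k={\rm P}\circ\lambda_k$ are graded symmetric.

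The substance is the generalized Jacobi identity \eqref{eq:general-JI}, which I would prove by induction on $n$. The base case $n=1$ asks $l_1\circ l_1=0$: writing $l_1(l_1(a))={\rm P}[\Delta,{\rm P}[\Delta,a]]$ and substituting ${\rm P}[\Delta,a]=[\Delta,a]-(\id-{\rm P})[\Delta,a]$ yields ${\rm P}\mathcal{D}^2(a)-{\rm P}[\Delta,(\id-{\rm P})[\Delta,a]]$; the first term vanishes because $\mathcal{D}^2=0$, and the second because $[\Delta,(\id-{\rm P})[\Delta,a]]\in\Ker({\rm P})$ by (b). For the inductive step I would expand each summand $l_{n-i+1}(l_i(\cdots),\cdots)$ of the Jacobiator by the same device: replace the inner $l_i=\lambda_i-(\id-{\rm P})\lambda_i$ and rewrite $[\Delta,\lambda_i]=\mathcal{D}\lambda_i$ using that $\mathcal{D}$ is a derivation with $\mathcal{D}\Delta=0$. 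The goal is to show that, after reindexing the shuffle sums, the $\mathcal{D}$-generated contributions reassemble into ${\rm P}$ applied to a nested bracket in which two copies of $\Delta$ collide, i.e. into a multiple of $[\Delta,\Delta]=0$, while the surviving $(\id-{\rm P})$-terms cancel pairwise under ${\rm P}$.

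The main obstacle is exactly this last cancellation. Unlike the case $n=1$, once a factor lands in $\Ker({\rm P})$ it is bracketed further with the remaining $a_j\in\mathfrak h$, and bracketing $\Ker({\rm P})$ with $\mathfrak h$ need not preserve $\Ker({\rm P})$; so the vanishing is not termwise but a cancellation dictated by the shuffles. Keeping the Koszul signs $\varepsilon(\sigma)$ coherent through the reindexing and matching each leftover term with its partner is the delicate, purely combinatorial part. I expect the cleanest route is to organize the whole alternating sum as ${\rm P}$ of a single expression built from $\mathcal{D}$ and $[\Delta,\Delta]$ plus correction terms in $\Ker({\rm P})$, so that $[\Delta,\Delta]=0$ forces the total to vanish; this is precisely the content of Voronov's higher-derived-bracket theorem, so alternatively one simply invokes \cite{Vor05}.
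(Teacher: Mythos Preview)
The paper does not prove this theorem at all: it is stated with the attribution ``\cite{Vor05}'' and used as a black box. So your final fallback, ``alternatively one simply invokes \cite{Vor05}'', is literally the paper's entire argument. In that sense your proposal is not wrong; it is already more than the paper provides.

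That said, as a self-contained proof your proposal has a genuine gap. The graded-symmetry argument and the $n=1$ case of the generalized Jacobi identity are correct, but for $n\geq 2$ you only outline a strategy: replace each inner $l_i$ by $\lambda_i-(\id-{\rm P})\lambda_i$, use that $\mathcal D$ is a square-zero derivation, and hope the $(\id-{\rm P})$-terms cancel under the shuffle sum. You correctly identify the obstruction---$[\Ker({\rm P}),\mathfrak h]$ need not lie in $\Ker({\rm P})$, so the vanishing is not termwise---but you do not carry out the cancellation. The phrases ``I expect'' and ``the delicate, purely combinatorial part'' are placeholders, not arguments. If you want an actual proof rather than a citation, Voronov's original argument proceeds by first establishing the stronger intermediate identity ${\rm P}[\cdots[\Delta,b_1],\dots,b_k]=\sum\varepsilon(\sigma)\,l_k(\cdots)$ for $b_i$ ranging over all of $L$ (not just $\mathfrak h$), which lets one absorb the $(\id-{\rm P})$-corrections into a single nested bracket where $[\Delta,\Delta]=0$ applies directly; alternatively one can pass to the bar/cobar formalism where the statement becomes $\mathcal D^2=0$ on a suitable coalgebra. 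Either route requires real bookkeeping that your sketch does not supply.
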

We call $\{l_k\}^{+\infty}_{k=1}$ the higher derived brackets of V-data $(L,\mathfrak h,{\rm P},\Delta)$.

\subsection{Maurer-Cartan characterization of relative Rota-Baxter operators on $n$-\LP pairs}
The notion of relative Rota-Baxter operators on $n$-\LP pairs are generalization of relative Rota-Baxter operators on both Lie algebras introduced in \cite{Kuper1} and $3$-Lie algebras introduced in \cite{BaiC19}.

\begin{defi}\label{rtbo:defi}
Let $(\frak g,[-,\cdots,-]_{\frak g};\rho)$ be an $n$-\LP pair.
A linear operator $T: V \rightarrow \frak g$ is called a {\bf relative Rota-Baxter operator} on an $n$-\LP pair $(\frak g,[-,\cdots,-]_{\frak g};\rho)$ if T satisfies
\begin{equation}\label{13}
  [Tv_1,\cdots,Tv_n]_{\frak g}=\sum\limits_{i=1}^n(-1)^{n-i}T(\rho(Tv_1,\cdots,\widehat{Tv}_i,\cdots,Tv_n)(v_i)),
\end{equation}
where $v_1,v_2,\cdots,v_n\in V$.
\end{defi}

\begin{rmk}
A {\bf Rota-Baxter operator} $T:\g\rightarrow\g$ on an $n$-Lie algebra $\g$ is a relative Rota-Baxter operator on an $n$-\LP pair $(\g;\ad)$. Furthermore, if  the $n$-Lie algebra reduces to a Lie algebra $(\g,[-,-])$, then the resulting linear operator $T:\g\rightarrow \g$ is a Rota-Baxter operator on the Lie algebra $\g$ introduced by the physicists C.-N. Yang and R. Baxter and if  the $n$-Lie algebra reduces to a $3$-Lie algebra $(\g,[-,-,-])$, then the resulting linear operator $T:\g\rightarrow \g$ is a Rota-Baxter operator on the $3$-Lie algebra $\g$ given in \cite{BR}.\end{rmk}

Consider the graded vector space
$$C^*(V,\frak g)=\oplus_{m\geq0}C^{m}(V,\frak g)=\Hom(\otimes^m(\wedge^{n-1}V)\wedge V,\frak g).$$

Define an $n$-linear operator $\{-,\cdots,-\}:C^{m_1}(V,\frak g)\times C^{m_2}(V,\frak g)\times\cdots \times C^{m_n}(V,\frak g) \rightarrow C^{m_1+\cdots+m_n+1}(V,\frak g)$ by
\begin{equation}\label{eq16}
 \{P_1,P_2,\cdots,P_n\}=[[[\mu+\rho,P_1]_{\nl},P_2]_{\nl},\cdots,P_n]_{\nl}.
\end{equation}
\emptycomment{\begin{proof}
For the formula $f(\mathfrak{X}_{\sigma(1)},\cdots,\mathfrak{X}_{\sigma(k-1)},x^1_{k+q}\wedge\cdots\wedge x^{i-1}_{k+q}\wedge
\g(\mathfrak{X}_{\sigma(k)},\cdots,\mathfrak{X}_{\sigma(k+q-1)},x^i_{k+q})$\\$\wedge x^{i+1}_{k+q}\wedge\cdots\wedge x^{n-1}_{k+q},\mathfrak{X}_{k+q+1},\cdots,
\mathfrak{X}_{p+q},x)$, we let $X=(\mathfrak{X}_{\sigma(1)},\cdots,\mathfrak{X}_{\sigma(k-1)}), Y=x^1_{k+q}\wedge\cdots\wedge x^{i-1}_{k+q}, Z=(\mathfrak{X}_{\sigma(k)},\cdots,\mathfrak{X}_{\sigma(k+q-1)},x^i_{k+q}), W= x^{i+1}_{k+q}\wedge\cdots\wedge x^{n-1}_{k+q},T=(\mathfrak{X}_{k+q+1},\cdots,
\mathfrak{X}_{p+q},x)$. The condition (i) holds, because $l_f+k_f=p(n-1),l_\g+k_\g=q(n-1), (l_f+l_\g)+(k_f+k_\g)=(p+q)(n-1)$.

Take an element $X\otimes Y\otimes Z\otimes W\otimes  T\in \g^{l_f+l_\g+1,k_f+k_\g}$,
\begin{equation}\label{eq:fsz19}
f(X,Y\wedge \g(Z)\wedge W,T)
\end{equation}
If Eq. ($\ref{eq:fsz19}$) is zero, then it is in $\g_1$. Next we assume Eq. ($\ref{eq:fsz19}) \neq 0$. We consider the case of $\g(Z)\in \g_1$. In this case, $Z$ is in $\g^{l_\g+1,k_\g}$ and
$X\otimes Y\otimes W\otimes T$ is in $\g^{l_f,k_f}$. Thus $X\otimes Y\wedge \g(Z)\wedge W\otimes T$ is an element in $\g^{l_f+1,k_f}$ which implies
$f(X\otimes Y\wedge \g(Z)\wedge W\otimes T) \in \g_1$.

When the case of  $\g(Z)\in \g_2$, $Z$ is in $\g^{l_\g,k_\g+1}$ and $X\otimes Y\otimes W\otimes T$ is in $\g^{l_f+1,k_f-1}$. Thus $X\otimes Y\wedge \g(Z)\wedge W\otimes T$ is an element in $\g^{l_f+1,k_f}$ which implies
$f(X\otimes Y\wedge \g(Z)\wedge W\otimes T) \in \g_1$.

Similarly, take an element $X\otimes Y\otimes Z\otimes W\otimes  T\in \g^{l_f+l_\g,k_f+k_\g+1}$,
We consider the case of $g(Z)\in \g_2$. In this case, $Z$ is in $g^{l_\g,k_\g+1}$ and
$X\otimes Y\otimes W\otimes T$ is in $\g^{l_f,k_f}$. Thus $X\otimes Y\wedge \g(Z)\wedge W\otimes T$ is an element in $\g^{l_f,k_f+1}$ which implies
$f(X\otimes Y\wedge \g(Z)\wedge W\otimes T) \in \g_2$.

When the case of  $\g(Z)\in \g_1$, $Z$ is in $\g^{l_f+1,k_\g}$ and $X\otimes Y\otimes W\otimes T$ is in $\g^{l_f-1,k_f+1}$. Thus $X\otimes Y\wedge \g(Z)\wedge W\otimes T$ is an element in $\g^{l_f,k_f+1}$, which implies
$f(X\otimes Y\wedge \g(Z)\wedge W\otimes T) \in \g_2$.

Finally, we show the condition $(4)$. If $X\otimes Y\otimes Z\otimes W\otimes  T$ is an element in
$\g^{l_f+l_\g+i+1,k_f+k_\g-i}$, where $i\neq -1,0$ and $\g(Z)\neq 0$, then $Z \in \g^{l_\g+1,k_\g}$ or $Z \in \g^{l_\g,k_\g+1}$.
Thus we have $X\otimes Y\otimes W\otimes T\in \g^{l_f+i,k_f-i}$ or $\in \g^{l_f+i+1,k_f-i-1}$, which implies
$X\otimes Y\wedge \g(Z)\wedge W\otimes T\in \g^{l_f+i+1,k_f-i}$, where $i\neq -1,0$, thus $f(X\otimes Y\wedge \g(Z)\wedge W\otimes T)=0$.
\end{proof}}

\begin{thm}\label{thm:Maurer-Cartan element}
With the above notations, $(C^*(V,\frak g),\{-,\cdots,-\})$ is a Lie $n$-algebra. Moreover, its Maurer-Cartan elements are precisely relative Rota-Baxter operators on the $n$-\LP pair $(\frak g,[-,\cdots,-]_{\frak g};\rho)$.
\end{thm}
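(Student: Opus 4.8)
The plan is to apply Voronov's $V$-data machinery (Theorem \ref{th:V-data-L-infty}) to the graded Lie algebra $(C^*(V\oplus\g, V\oplus\g),[-,-]_{\nl})$ already constructed for the direct sum $V\oplus\g$, and then extract the promised Lie $n$-algebra structure on $C^*(V,\g)$ as the higher derived bracket. First I would set up the $V$-data explicitly: take $L=\bigl(\oplus_{k}C^{k(n-1)\mid 0}(V\oplus\g, V\oplus\g),[-,-]_{\nl}\bigr)$ (using here the direct sum with $V$ in the first slot and $\g$ in the second, so the bidegree bookkeeping from Definition \ref{bidegree-defi} applies), let $\h$ be the abelian graded subalgebra consisting of the lifts of maps in $C^*(V,\g)=\oplus_m\Hom(\otimes^m(\wedge^{n-1}V)\wedge V,\g)$, let $\mathrm P$ be the projection onto $\h$, and take $\Delta=\mu+\rho$, which satisfies $[\Delta,\Delta]_{\nl}=0$ by the preceding Proposition since $(\g,\mu;\rho)$ is an $n$-\LP pair.

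The key verifications, in order, are the $V$-data axioms. I would first check that $\h$ is abelian: a map $H:V\to\g$ has lift of bidegree $0\mid -1$ (it lowers $V$-count and raises $\g$-count), and more generally an element of $C^m(V,\g)$ lifts to bidegree $m(n-1)\mid -1$; by Lemma \ref{thm:bidegree} the bracket of two such lifts has bidegree with second component $-2$, which forces it to vanish identically because no map can take values forcing a negative $\g$-count in both remaining output slots — so $[\h,\h]_{\nl}=0$. Next I would verify that $\mathrm{Ker}(\mathrm P)$ is a graded Lie subalgebra and that $\Delta\in\mathrm{Ker}(\mathrm P)^1$ (indeed $\|\mu+\rho\|=n-1\mid 0$, so its $\g$-degree component is $0\neq -1$, hence it lies in the kernel of the projection onto $\h$). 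Granting these, Theorem \ref{th:V-data-L-infty} immediately yields an $L_\infty$-algebra on $\h\cong C^*(V,\g)$; the crucial point is then that the bidegree arithmetic collapses all brackets except the $n$-ary one. Concretely, $l_k(P_1,\dots,P_k)=\mathrm P[\cdots[[\Delta,P_1],P_2]\cdots,P_k]$, and by Lemma \ref{thm:bidegree} the iterated bracket has second bidegree component $0+(-1)k$ before projection; landing in $\h$ (second component $-1$) forces $k=n$, so $l_k=0$ for $k\neq n$ and the single surviving bracket is exactly \eqref{eq16}. This gives the Lie $n$-algebra structure.

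For the Maurer-Cartan claim, I would unwind equation \eqref{10} for an element $T\in C^0(V,\g)=\Hom(V,\g)$, i.e.\ $\frac{1}{n!}\{T,\dots,T\}=\frac{1}{n!}[\cdots[[\mu+\rho,\hat T],\hat T],\dots,\hat T]_{\nl}=0$. The strategy is to evaluate the lift $\hat T$ of $T$ on arguments $(v_1,\dots,v_n)\in V$ and track how the successive brackets with $\mu+\rho$ replace arguments by $Tv_i\in\g$; the inner bracket $[\mu+\rho,\hat T]$ applied in the semidirect-product structure produces terms $\rho(Tv_1,\dots,\widehat{Tv_i},\dots,Tv_n)(v_i)$, and the outermost interaction with $\mu$ produces $[Tv_1,\dots,Tv_n]_\g$, so that $\frac{1}{n!}\{T,\dots,T\}(v_1,\dots,v_n)$ equals (up to the combinatorial normalization $n!$ arising from the $n$ interchangeable copies of $\hat T$ and the shuffle signs) precisely the difference of the two sides of \eqref{13}. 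Thus the Maurer-Cartan equation holds if and only if $T$ is a relative Rota-Baxter operator.

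The main obstacle I anticipate is the bidegree bookkeeping that makes the $V$-data argument clean: I must set the direct-sum convention so that $C^*(V,\g)$ really corresponds to the abelian subalgebra $\h$ and so that the vanishing $l_{k\neq n}=0$ follows purely from Lemma \ref{thm:bidegree} rather than from an ad hoc computation; getting the signs $(-1)^{n-i}$ in \eqref{13} to emerge correctly from the graded commutator and the lift formula for $\hat\rho$ will require careful sign tracking, and the factor $n!$ from the symmetrization must be reconciled with the single ordered iterated bracket in \eqref{eq16}. Once the abelian-subalgebra and kernel conditions are confirmed via the bidegree lemma, the rest is a controlled—if lengthy—computation, so I would present the $V$-data setup in full and relegate the explicit evaluation of $\{T,\dots,T\}$ to a direct calculation.
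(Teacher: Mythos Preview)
Your overall strategy is exactly the paper's: set up a $V$-data with $\Delta=\mu+\rho$, invoke Theorem~\ref{th:V-data-L-infty}, use the bidegree lemma to kill all $l_k$ with $k\neq n$, and finish with a direct evaluation of $\{T,\dots,T\}$. However, there is a concrete error in your choice of the ambient graded Lie algebra $L$, and several bidegree values are misstated.

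You take $L=\bigoplus_k C^{k(n-1)\mid 0}(V\oplus\g,V\oplus\g)$. This is the subalgebra introduced in Section~\ref{sec:MC-characterizations LP pairs} whose Maurer--Cartan elements are $n$-\LP pairs; its elements all have second bidegree component $0$. But elements of $\h=C^*(V,\g)$ do \emph{not} have second bidegree component $0$ (with the paper's convention $\g_1=\g$, $\g_2=V$ their bidegree is $-1\mid (n-1)m+1$; with your swapped convention it is $(n-1)m+1\mid -1$). Hence $\h\not\subseteq L$ and there is no projection $\mathrm P:L\to L$ with image $\h$, so the $V$-data axioms cannot even be formulated. The fix is simply to take $L$ to be the \emph{full} graded Lie algebra $(C^*_{\nl}(\g\oplus V,\g\oplus V),[-,-]_{\nl})$, exactly as the paper does.

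Your bidegree arithmetic also needs correction. With your swap ($\g_1=V$, $\g_2=\g$) one has $\|\mu+\rho\|=0\mid n-1$ (not second component $0$) and $\|H\|=1\mid -1$ for $H:V\to\g$ (not $0\mid -1$). The iterated bracket $[\cdots[\Delta,P_1],\dots,P_k]$ then has second component $(n-1)-k$, and equating this to $-1$ gives $k=n$; your formula ``$0+(-1)k=-1$'' would instead force $k=1$. The vanishing of $l_k$ for $k>n$ follows because bidegree components below $-1$ force the map to be identically zero, a point worth stating explicitly. Once $L$ and the bidegrees are corrected, the rest of your outline (abelianness of $\h$, $\Delta\in\Ker(\mathrm P)^1$, and the direct Maurer--Cartan computation) goes through and matches the paper's proof.
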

\begin{proof}
Let $(V;\rho)$ be a representation of $n$-Lie algebra $(\frak g,\mu)$. Then the following quadruple gives a $V$-data:
\begin{itemize}
  \item the graded Lie algebra $(L,[-,-])$ is given by $(C^*_{\nl}(\frak g\oplus V,\frak g\oplus V),[-,-]_{\nl})$;
  \item the abelian graded Lie subalgebra $\mathfrak h$ is given by $\mathfrak h=C^*(V,\frak g)=\oplus_{m\geq0}\Hom(\otimes^m(\wedge^{n-1}V)\wedge V,\g)$;
  \item ${\rm P}:L\rightarrow L$ is the projection onto the subspace $\mathfrak h$;
  \item $\Delta=\mu+\rho\in \Ker~({\rm P})^1,~[\Delta,\Delta]_{\nl}=0$.
\end{itemize}
By Theorem \ref{th:V-data-L-infty}, $(\mathfrak h,\{l_k\}^{+\infty}_{k=1})$ is an $L_{\infty}$-algebra, where $l_k$ is given by Eq. (\ref{12}).
Note that $\|\mu\|=n-1|0$ and $\|\rho\|=n-1|0$ for $\mu\in \Hom(\wedge^n \g,\g)$ and $\rho\in \Hom(\wedge^{n-1} \g\otimes V,V)$.
For any $P_i\in \Hom(\otimes^{m_i}(\wedge^{n-1} V)\wedge V,\g)$, we have $\|P_i\|=-1|(n-1)m_i+1, 1\leq i\leq n$.
By Theorem \ref{thm:bidegree}, we have
\begin{gather*}
                        \|[\mu+\rho,P_1]_{\nl}\|=n-2|(n-1)m_1+1,  \\
                        \|[[\mu+\rho,P_1]_{\nl},P_2]_{\nl}\|=n-3|(n-1)(m_1+m_2)+2,\\
                       \vdots\\
                       \|[[[\mu+\rho,P_1]_{\nl},P_2]_{\nl},\cdots,P_{n-1}]_{\nl}\|=0|(n-1)(m_1+\cdots+m_{n-1}+1),\\
               \|[[[\mu+\rho,P_1]_{\nl},P_2]_{\nl},\cdots,P_n]_{\nl}\|=-1|(n-1)(m_1+\cdots+m_n+1)+1,
                       \end{gather*}
which imply that \begin{gather*}
               [\mu+\rho,P_1]_{\nl}\in \Ker~({\rm P}),\\
               [[\mu+\rho,P_1]_{\nl},P_2]_{\nl}\in \Ker~({\rm P}), \\
               \vdots\\
               [[[\mu+\rho,P_1]_{\nl},P_2]_{\nl},\cdots,P_{n-1}]_{\nl}\in\mathfrak \Ker~({\rm P}),\\
               [[[\mu+\rho,P_1]_{\nl},P_2]_{\nl},\cdots,P_n]_{\nl}\in\mathfrak h.
               \end{gather*}
Thus, we deduce $l_k=0$ for all $k\geq 1, k\neq n$. Therefore $(C^*(V,\g),\{-,\cdots,-\}=l_n)$ is a Lie $n$-algebra.

By a direct calculation, we have
\begin{eqnarray*}
&&\{T,\cdots,T\}(v_1,\cdots,v_n)\\
&=&(n-1)!\sum\limits_{1\leq i_1<\cdots<i_{n-1}\leq n}[\mu+\rho,T]_{\nl}(\cdots,Tv_{i_1},\cdots,Tv_{i_{n-1}},\cdots)\\
&&-(n-1)!T\sum\limits_{1\leq i_1<\cdots<i_{n-2}\leq n}[\mu+\rho,T]_{\nl}(\cdots,Tv_{i_1},\cdots\,Tv_{i_{n-2}},\cdots)\\
&=&n![Tv_1,\cdots,Tv_n]_{\frkg}-[(n-1)!+(n-1)!C^{n-2}_{n-1}]T\sum\limits_{1\leq i_1<\cdots<i_{n-1}\leq n}(\mu+\rho)(\cdots,Tv_{i_1},\cdots,Tv_{i_{n-1}},\cdots)\\
&=&n![Tv_1,\cdots,Tv_n]_{\frkg}-n!T\sum\limits_{1\leq i_1<\cdots<i_{n-1}\leq n}(\mu+\rho)(\cdots,Tv_{i_1},\cdots,Tv_{i_{n-1}},\cdots)\\
&=&n![Tv_1,\cdots,Tv_n]_{\frkg}-n!\sum\limits^{n}_{i=1}(-1)^{n-i}T\rho(Tv_1,\cdots,\widehat{v_i},\cdots,Tv_n)(v_i)=0,
\end{eqnarray*}
which implies that the linear map $T\in \Hom(V,\g)$ is a Maurer-Cartan element of the Lie $n$-algebra $(C^*(V,\g),\{-,\cdots,-\})$ if and only if $T$ is a relative Rota-Baxter operator on the $n$-\LP pair $(\frak g,[-,\cdots,-]_{\frak g};\rho)$.
\end{proof}

\begin{rmk}
Consider the graded vector space $C^*(V,\frak g)=\Hom((\otimes^m V)\wedge V,\frak g).$ By Theorem \ref{thm:Maurer-Cartan element}, $(C^*(V,\frak g),\{-,-\})$ is a Lie $2$-algebra and its Maurer-Cartan elements are precisely relative Rota-Baxter operators on the \LP pair $(\frak g,[-,-]_{\frak g};\rho)$. In \cite{TBGS}, the authors showed that the graded vector $\tilde{C}^*(V,\frak g)=\Hom(\wedge^{m+1} V,\frak g)$  equipped with the bracket $\{-,-\}$ given by
$$\{P,Q\}=[[\mu+\rho,P]_{\rm NR},Q]_{\rm NR},\quad \forall~P\in \Hom(\wedge^{m_1+1},\g),Q\in\Hom(\wedge^{m_2+1},\g)$$
  is also a  Lie $2$-algebra and its Maurer-Cartan elements are also precisely relative Rota-Baxter operators on the \LP pair $(\frak g,[-,-]_{\frak g};\rho)$, where the bracket $[-,-]_{\rm NR}$ is the Nijenhuis-Richardson bracket on $\tilde{C}^*(V,\frak g)$. Therefore, we give a new Lie $2$-algebra whose Maurer-Cartan elements are precisely relative Rota-Baxter operators on \LP pairs.
  \end{rmk}

Let $T$ be a relative Rota-Baxter operator on an $n$-\LP pair $(\frak g,[-,\cdots,-]_{\frak g};\rho)$. Since $T$ is a Maurer-Cartan element of the Lie $n$-algebra $(C^*(V,\frak g),\{-,\cdots,-\})$ given by Theorem \ref{thm:Maurer-Cartan element}, we have the twisted $L_{\infty}$-algebra structure on $C^*(V,\frak g)$ as follows:
\begin{eqnarray}\label{eq:l^T}
  l^{T}_k(P_1,\cdots,P_k) &=&  \frac{1}{(n-k)!}\{\underbrace{T,\cdots,T}_{n-k},P_1,\cdots,P_k\},  \quad \forall~k\leq n,  \\
  l^T_k &=& 0, \quad \forall~k\geq n+1.
\end{eqnarray}
\begin{thm}
Let $T:V\rightarrow \g$ be a relative Rota-Baxter operator on an $n$-\LP pair $(\frak g,[-,\cdots,-]_{\frak g};\rho)$. Then for a linear map $T':V\rightarrow \g$, $T+T'$ is a relative Rota-Baxter operator if and only if $T'$ is a Maurer-Cartan element of the twisted $L_{\infty}$-algebra $(C^*(V,\frak g),l^{T}_1,\cdots,l^{T}_n)$, that is, $T'$ satisfies the Maurer-Cartan equation:
$$l^{T}_1(T')+\frac{1}{2}l^{T}_2(T',T')+\frac{1}{3!}l^{T}_3(T',T',T')+\cdots+\frac{1}{n!}l^T_n(\underbrace{T',\cdots,T'}_{n})=0.$$
\end{thm}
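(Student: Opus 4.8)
The plan is to derive this statement directly from Theorem \ref{thm:Maurer-Cartan element} together with the twisting theorem of \cite{Get09}, without any computation beyond those two results. First I would fix the degree bookkeeping that makes the Maurer-Cartan equations meaningful: both $T$ and $T+T'$ are linear maps $V\to\g$, hence lie in $C^0(V,\g)=\Hom(V,\g)$, the degree-$0$ part of the graded vector space $C^*(V,\g)$ underlying the Lie $n$-algebra of Theorem \ref{thm:Maurer-Cartan element} (the summand $\Hom(\otimes^m(\wedge^{n-1}V)\wedge V,\g)$ is placed in degree $m$, so that the $n$-ary bracket $\{-,\cdots,-\}$ has degree $1$ and its Maurer-Cartan elements sit in degree $0$). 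By Theorem \ref{thm:Maurer-Cartan element}, $T$ is a relative Rota-Baxter operator if and only if it is a Maurer-Cartan element of $(C^*(V,\g),\{-,\cdots,-\})$, and the identical equivalence holds with $T$ replaced by $T+T'$, since $T+T'$ is again an element of $C^0(V,\g)$.

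Since $T$ is by hypothesis a Maurer-Cartan element, I would then apply the twisting theorem of \cite{Get09} with $\alpha=T$. This produces the twisted $L_\infty$-algebra $(C^*(V,\g),l^T_1,\cdots,l^T_n)$ whose brackets are the $l^T_k$ recorded in (\ref{eq:l^T}); the bidegree computation carried out in the proof of Theorem \ref{thm:Maurer-Cartan element} forces $l^T_k=0$ for all $k\geq n+1$, so only finitely many brackets survive. The conclusion of that theorem is precisely that $T+T'$ is a Maurer-Cartan element of the Lie $n$-algebra $(C^*(V,\g),\{-,\cdots,-\})$ if and only if $T'$ is a Maurer-Cartan element of the twisted $L_\infty$-algebra $(C^*(V,\g),l^T_1,\cdots,l^T_n)$. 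Combining this equivalence with the two instances of Theorem \ref{thm:Maurer-Cartan element} from the previous paragraph gives exactly the asserted characterization of $T+T'$ as a relative Rota-Baxter operator.

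It remains only to write the Maurer-Cartan equation explicitly. For a degree-$0$ element $T'$ of an $L_\infty$-algebra the Maurer-Cartan equation is $\sum_{k=1}^{+\infty}\frac{1}{k!}l^T_k(T',\cdots,T')=0$, and the vanishing $l^T_k=0$ for $k\geq n+1$ truncates this to the finite sum $l^T_1(T')+\frac{1}{2}l^T_2(T',T')+\cdots+\frac{1}{n!}l^T_n(T',\cdots,T')=0$ displayed in the statement. I do not expect a genuine obstacle here: all the substantive content has already been front-loaded into Theorem \ref{thm:Maurer-Cartan element} and the abstract twisting machinery, and the only verifications are that $T$ and $T'$ are degree-$0$ elements of $C^*(V,\g)$ and that the infinite Maurer-Cartan sum terminates at order $n$.
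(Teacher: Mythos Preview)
Your proposal is correct and follows essentially the same path as the paper's own proof, which also reduces everything to Theorem \ref{thm:Maurer-Cartan element}. The only difference is cosmetic: the paper expands $\frac{1}{n!}\{T+T',\cdots,T+T'\}$ explicitly via binomial coefficients and then identifies the result with $\sum_{k=1}^n\frac{1}{k!}l^T_k(T',\cdots,T')$, whereas you invoke the already-stated twisting theorem of \cite{Get09} to perform that step abstractly.
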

\begin{proof}
By Theorem \ref{thm:Maurer-Cartan element}, $T+T'$ is a relative Rota-Baxter operator if and only if $$\frac{1}{n!}\{\underbrace{T+T',\cdots,T+T'\}}_{n}=0.$$
Applying $\underbrace{\{T,\cdots,T\}}_{n}=0$, the above condition is equivalent to
\begin{equation*}
\frac{1}{n!}(C_n^1\{\underbrace{T,\cdots,T}_{n-1},T'\}+\cdots
+C_n^k\{\underbrace{T,\cdots,T}_{n-k},\underbrace{T',\cdots,T'}_{k}\}+\cdots+C_n^n\{\underbrace{T',\cdots,T'}_{n}\})=0.
\end{equation*}
That is, $l^{T}_1(T')+\frac{1}{2}l^{T}_2(T',T')+\frac{1}{3!}l^{T}_3(T',T',T')+\cdots+\frac{1}{n!}l^{T}_n(\underbrace{T',\cdots,T'}_{n})=0$, which implies that $T'$ is a Maurer-Cartan element of the twisted $L_{\infty}$-algebra $(C^*(V,\frak g),l^{T}_1,\cdots,l^{T}_n)$.
\end{proof}

\subsection{$n$-pre-Lie algebras and relative Rota-Baxter operators on $n$-\LP pairs}
Now we give the notion of an $n$-pre-Lie algebra, which is a generalization of $3$-pre-Lie algebra introduced in \cite{BaiC19}.
\begin{defi}
Let $\g$ be a vector space with a multilinear map $\{-,\cdots,-\}:\wedge^{n-1}\g\otimes \g\rightarrow\g$. The pair $(\g,\{-,\cdots,-\})$ is called an {\bf $n$-pre-Lie algebra} if for $x_1,\cdots,x_n,y_1,\cdots,y_n\in\g$, the following identities hold:
\begin{eqnarray}\label{eq:n-pre1}
\{x_1,\cdots,x_{n-1},\{y_1,\cdots,y_{n-1},y_n\}\}&=&\sum\limits_{i=1}^{n-1}\{y_1,\cdots,y_{i-1},[x_1,\cdots,x_{n-1},y_i]_C,y_{i+1},\cdots,y_n\}\\
\nonumber&&+\{y_1,\cdots,y_{n-1},\{x_1,\cdots,x_{n-1},y_n\}\},\\
\label{eq:n-pre2}\{[y_1,\cdots,y_n]_C,x_1,\cdots,x_{n-2},x_{n-1}\}&=&\sum\limits_{i=1}^{n}(-1)^{n-i}\{y_1,\cdots,\hat{y_i},\cdots,y_n,\{y_i,x_1,\cdots,x_{n-2},x_{n-1}\}\},
\end{eqnarray}
where\begin{equation}\label{eq:npreC}
    [x_1,\cdots,x_n]_C=\sum\limits_{i=1}^{n}(-1)^{n-i}\{x_1,\cdots,\hat{x_i},\cdots,x_n,x_i\}.
\end{equation}
\end{defi}

Recall that a {\bf pre-Lie algebra} is a pair $(\g,\star)$, where $\g$ is a vector space and  $\star:\g\otimes \g\longrightarrow \g$ is a bilinear multiplication
satisfying
\begin{equation}\label{eq:pre-Lie algebra}
(x\star y)\star z-x\star(y\star z)=(y\star x)\star
z-y\star(x\star z),\quad \forall~x,y,z\in \g.
\end{equation}
For  a $2$-pre-Lie algebra $(\g,\{-,-\})$, we set $x\star y=\{x,y\}$ for $x,y\in\g$. It is obvious that  Eq. $\eqref{eq:n-pre1}$ is  equivalent to
$$x_1\star(y_1\star y_2)=(x_1\star y_1)\star y_2-(y_1\star x)\star y_2+y_1\star(x_1\star y_2)$$
and Eq. $\eqref{eq:n-pre2}$ is equivalent to
$$(y_1\star y_2)\star x_1-(y_2\star y_1)\star x_1=-y_2\star(y_1\star x_1)+y_1\star(y_2\star x_1).$$
Then we have $$\mbox{Eq. } \eqref{eq:n-pre1}\Leftrightarrow \mbox{Eq. }\eqref{eq:n-pre2} \Leftrightarrow \mbox{Eq. }\eqref{eq:pre-Lie algebra}.$$
Thus $2$-pre-Lie algebra is a pre-Lie algebra. See the survey
\cite{Pre-lie algebra in geometry} and the references therein for
more details on pre-Lie algebras.

\begin{pro}\label{prop6.2pre-lie}
Let $(\g,\{-,\cdots,-\})$ be an $n$-pre-Lie algebra. Then the induced $n$-bracket $[-,\cdots,-]_C$ given by Eq. $(\ref{eq:npreC})$ defines an $n$-Lie algebra.
\end{pro}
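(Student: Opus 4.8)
The plan is to verify directly that the bracket $[-,\cdots,-]_C$ defined by Eq.~\eqref{eq:npreC} satisfies both the skew-symmetry and the fundamental identity~\eqref{1} of an $n$-Lie algebra. Skew-symmetry in the first $n-1$ slots is immediate from the definition, since $\{-,\cdots,-\}$ is skew in its first $n-1$ arguments by hypothesis. To obtain full skew-symmetry of $[-,\cdots,-]_C$ (including the last slot), one checks that the alternating sum in Eq.~\eqref{eq:npreC} is genuinely totally antisymmetric; this is a standard bookkeeping fact about the operation $x_1\wedge\cdots\wedge x_n\mapsto\sum_i(-1)^{n-i}\{x_1,\cdots,\hat{x_i},\cdots,x_n,x_i\}$, and I would dispatch it by comparing the effect of a transposition of two adjacent arguments on the summands.

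The substantive part is the fundamental identity. First I would introduce, for fixed $x_1,\cdots,x_{n-1}$, the operator $L_{x_1,\cdots,x_{n-1}}=\{x_1,\cdots,x_{n-1},-\}$ and observe that Eq.~\eqref{eq:n-pre1} says precisely that $L_{x_1,\cdots,x_{n-1}}$ acts as a derivation of the (not yet proven) bracket on the last argument, \emph{provided} one replaces the inner bracket there by $[-,\cdots,-]_C$. More precisely, the plan is to rewrite the left-hand side $[x_1,\cdots,x_{n-1},[y_1,\cdots,y_n]_C]_C$ by expanding the outer $[-,\cdots,-]_C$ via Eq.~\eqref{eq:npreC}, producing terms of the form $\{\cdots,\{x_1,\cdots,x_{n-1},-\}\}$ and $\{x_1,\cdots,x_{n-1},\{\cdots\}\}$, and then to apply the two axioms~\eqref{eq:n-pre1} and~\eqref{eq:n-pre2} to each group of terms. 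The $n$-pre-Lie relation~\eqref{eq:n-pre1} handles the terms where $L_{x_1,\cdots,x_{n-1}}$ must be pushed through the bracket, while relation~\eqref{eq:n-pre2} is exactly what is needed to reorganize the terms coming from the antisymmetrization over $y_1,\cdots,y_n$ in the definition of $[y_1,\cdots,y_n]_C$.

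I expect the main obstacle to be the combinatorial bookkeeping of signs and indices when expanding both occurrences of $[-,\cdots,-]_C$ simultaneously: the left-hand side contains one layer of antisymmetrization and the right-hand side of~\eqref{1} contains $n$ copies of $[-,\cdots,-]_C$ each carrying its own antisymmetrization, so a careful matching of the $(-1)^{n-i}$ factors across the two sides is essential. A clean way to organize this is to separate the resulting terms according to which of the arguments $y_1,\cdots,y_n$ ends up in the final ``distinguished'' slot of the innermost $\{-,\cdots,-\}$, and to show that for each such choice the contributions from the left-hand side and from the right-hand side of~\eqref{1} coincide after applying exactly one of the two axioms.

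Concretely, the plan is as follows. Substituting Eq.~\eqref{eq:npreC} into $[x_1,\cdots,x_{n-1},[y_1,\cdots,y_n]_C]_C$ and grouping, I would split the sum into the part where $[y_1,\cdots,y_n]_C$ sits in the last slot and the part where it sits in one of the first $n-1$ slots. The first part is handled by~\eqref{eq:n-pre2} together with~\eqref{eq:n-pre1}, and the second by repeated use of~\eqref{eq:n-pre1}; collecting everything and re-antisymmetrizing yields exactly $\sum_{i=1}^n[y_1,\cdots,y_{i-1},[x_1,\cdots,x_{n-1},y_i]_C,y_{i+1},\cdots,y_n]_C$. Since every individual step is a routine (if lengthy) manipulation governed by the two defining axioms, I would carry out the full computation in the written proof but present it as a single consolidated verification rather than term by term.
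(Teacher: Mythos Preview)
Your proposal is correct and follows essentially the same approach as the paper: both verify skew-symmetry of $[-,\cdots,-]_C$ from the alternating-sum form of Eq.~\eqref{eq:npreC}, then establish the fundamental identity by expanding $[x_1,\cdots,x_{n-1},[y_1,\cdots,y_n]_C]_C-\sum_i[y_1,\cdots,[x_1,\cdots,x_{n-1},y_i]_C,\cdots,y_n]_C$ directly via Eq.~\eqref{eq:npreC} and reducing the result to zero using axioms~\eqref{eq:n-pre1} and~\eqref{eq:n-pre2}. The paper simply carries out the single consolidated computation you describe, so your plan matches it in substance.
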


\begin{proof}
By the skew-symmetry of the first $n-1$ variables, the induced $n$-bracket $[-,\cdots,-]_C$ given by Eq. $(\ref{eq:npreC})$ is skew-symmetric.
For $x_1,\cdots,x_{n-1},y_1,\cdots,y_n\in \g$, by Eq. (\ref{eq:n-pre1}) and Eq. (\ref{eq:n-pre2}), we have
\begin{eqnarray*}
 &&[x_1,\cdots,x_{n-1},[y_1,\cdots,y_n]_C]_C-\sum\limits_{i=1}^{n}[y_1,\cdots,y_{i-1},[x_1,\cdots,x_{n-1},y_i]_C,y_{i+1},\cdots,y_n]_C\\
 &=&\{x_1,\cdots,x_{n-1},\sum\limits_{i=1}^{n}(-1)^{n-i}\{y_1,\cdots,\hat{y_i},\cdots,y_n,y_i\}\}+\sum\limits_{i=1}^{n-1}(-1)^{n-i}\{x_1,\cdots,\hat{x_i},
   \cdots,x_{n-1},[y_1,\cdots,y_n]_C,x_i\}\\
 &&-\sum\limits_{i=1}^{n}(-1)^{n-i}\{y_1,\cdots,y_{i-1},\hat{y_i},y_{i+1},\cdots,y_n,
    \sum\limits_{j=1}^{n-1}(-1)^{n-j}\{x_1,\cdots,\hat{x_j},\cdots,x_{n-1},y_i,x_j\}+\{x_1,\cdots,x_{n-1},y_i\}\}\\
    &&-\sum\limits_{i=1}^n\sum\limits_{k=1,k\neq i}^{n}(-1)^{n-k}\{y_1,\cdots,\hat{y_k},\cdots,y_{i-1},
    [x_1,\cdots,x_{n-1},y_i]_C,y_{i+1},\cdots,y_n,y_k\}\\
    &=&\sum\limits_{i=1}^{n}(-1)^{n-i}\{x_1,\cdots,x_{n-1},\{y_1,\cdots,\hat{y_i},\cdots,y_n,y_i\}\}
    -\sum\limits_{i=1}^{n}(-1)^{n-i}\{y_1,\cdots,\hat{y_{i}},\cdots,y_n,\{x_1,\cdots,x_{n-1},y_i\}\}\\
    &&-\sum\limits_{i=1}^{n}\sum\limits_{k=1,k\neq i}^{n}(-1)^{n-i}\{y_1,\cdots,\hat{y_i},\cdots,y_{k-1},[x_1,\cdots,x_{n-1},y_k]_C,y_{k+1},\cdots,y_n,y_i\}\\
    &&+\sum\limits_{j=1}^{n-1}(-1)^{n-j}\{x_1,\cdots,\hat{x_j},\cdots,x_{n-1},[y_1,\cdots,y_n]_C,x_j\}\\
    &&-\sum\limits_{i=1}^{n}\sum\limits_{j=1}^{n-1}
    (-1)^{i+j}\{y_1,\cdots,\hat{y_i},\cdots,y_n,\{x_1,\cdots,\hat{x_j},\cdots,x_{n-1},y_i,x_j\}\}\\
    &=&\sum\limits_{j=1}^{n-1}(-1)^{j}\{[y_1,\cdots,y_n]_C,x_1,\cdots,\hat{x_j},\cdots,x_{n-1},x_j\}\\
    &&-\sum\limits_{j=1}^{n-1}\sum\limits_{i=1}^{n}(-1)^{n-i+j}\{y_1,\cdots,\hat{y_i},\cdots,y_n,\{y_i,x_1,\cdots,\hat{x_j},\cdots,x_{n-1},x_j\}\}=0.
\end{eqnarray*}
Thus $(\g,[-,\cdots,-]_C)$ is an $n$-Lie algebra.
\end{proof}

\begin{defi}
Let $(\g,\{-,\cdots,-\})$ be an $n$-pre-Lie algebra. The $n$-Lie algebra $(\g,[-,\cdots,-]_C)$ is called the {\bf sub-adjacent $n$-Lie algebra} of  $(\g,\{-,\cdots,-\})$ and $(\g,\{-,\cdots,-\})$ is called a {\bf compatible $n$-pre-Lie algebra} of the $n$-Lie algebra $(\g,[-,\cdots,-]_C)$ .
\end{defi}
Let $(\g,\{-,\cdots,-\})$ be an $n$-pre-Lie algebra. Define a skew-symmetric multi-linear map $L:\wedge^{n-1}\g\rightarrow \gl(\g)$
by
\begin{equation}\label{40}
  L(x_1,\cdots,x_{n-1})(x_n)=\{x_1,\cdots,x_{n-1},x_n\}, \qquad\forall~ x_1,\cdots,x_n\in \g.
\end{equation}

\begin{pro}\label{pro:n-pre-Lie rep}
With the above notations, $(\g;L)$ is a representation of the $n$-Lie algebra $(\g,[-,\cdots,-]_C)$.
\end{pro}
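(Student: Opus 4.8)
The plan is to verify that $\rho=L$, with $L$ defined by \eqref{40}, satisfies the two defining axioms \eqref{eq:rep1} and \eqref{eq:rep2} of a representation, where the relevant $n$-Lie algebra is the sub-adjacent one $(\g,[-,\cdots,-]_C)$ produced by Proposition \ref{prop6.2pre-lie}. Since $L$ is already a map out of $\wedge^{n-1}\g$, its skew-symmetry in the first $n-1$ arguments is built in, so only the two compatibility identities remain to be checked, and I expect each to be a near-verbatim reformulation of one of the defining $n$-pre-Lie identities \eqref{eq:n-pre1} and \eqref{eq:n-pre2}.

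First I would establish \eqref{eq:rep1}. Writing $\mathfrak{X}=x_1\wedge\cdots\wedge x_{n-1}$ and $\frkY=y_1\wedge\cdots\wedge y_{n-1}$ and evaluating on an arbitrary $y_n\in\g$, the definition \eqref{40} gives $[L(\mathfrak{X}),L(\frkY)](y_n)=\{x_1,\cdots,x_{n-1},\{y_1,\cdots,y_{n-1},y_n\}\}-\{y_1,\cdots,y_{n-1},\{x_1,\cdots,x_{n-1},y_n\}\}$. By the first $n$-pre-Lie identity \eqref{eq:n-pre1}, this difference equals $\sum_{i=1}^{n-1}\{y_1,\cdots,[x_1,\cdots,x_{n-1},y_i]_C,\cdots,y_{n-1},y_n\}$, which is precisely $L(\mathfrak{X}\circ\frkY)(y_n)$ by the definition of $\mathfrak{X}\circ\frkY$. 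Hence \eqref{eq:rep1} holds; it is in fact nothing but a restatement of \eqref{eq:n-pre1}.

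Next I would verify \eqref{eq:rep2}, which is where essentially all the (elementary) work lies. Evaluating both sides on an arbitrary $x_{n-1}\in\g$ and using \eqref{40}, the left-hand side becomes $\{x_1,\cdots,x_{n-2},[y_1,\cdots,y_n]_C,x_{n-1}\}$, while the right-hand side becomes $\sum_{i=1}^n(-1)^{n-i}\{y_1,\cdots,\widehat{y_i},\cdots,y_n,\{x_1,\cdots,x_{n-2},y_i,x_{n-1}\}\}$. These agree with the two sides of \eqref{eq:n-pre2} after one moves $[y_1,\cdots,y_n]_C$ (on the left) and each $y_i$ (inside the inner bracket on the right) from the first to the last of the $n-1$ skew-symmetric slots; each such move crosses $n-2$ entries and contributes a factor $(-1)^{n-2}$. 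Since the \emph{same} factor appears on both sides, the two factors cancel and \eqref{eq:rep2} reduces exactly to \eqref{eq:n-pre2}. The only genuine subtlety, and hence the main (minor) obstacle, is precisely this sign bookkeeping forced by the skew-symmetry in the first $n-1$ arguments of $\{-,\cdots,-\}$; once the signs are tracked correctly, both axioms collapse onto the two defining $n$-pre-Lie identities, so I would present the argument as the two equivalences \eqref{eq:rep1}$\Leftrightarrow$\eqref{eq:n-pre1} and \eqref{eq:rep2}$\Leftrightarrow$\eqref{eq:n-pre2}, concluding that $(\g;L)$ is a representation of $(\g,[-,\cdots,-]_C)$.
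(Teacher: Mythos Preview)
Your proposal is correct and follows essentially the same approach as the paper's own proof: both verify \eqref{eq:rep1} directly from \eqref{eq:n-pre1} and reduce \eqref{eq:rep2} to \eqref{eq:n-pre2} via the $(-1)^{n-2}$ sign coming from shifting the bracket argument across the skew-symmetric slots. The paper simply leaves this sign bookkeeping implicit (writing only ``which implies''), whereas you spell it out; otherwise the arguments are identical.
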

\begin{proof}
By \eqref{eq:n-pre1}, we have
  \begin{eqnarray*}
  &&\{x_1,\cdots,x_{n-1},\{y_1,\cdots,y_n\}\}-\{y_1,\cdots,y_{n-1},\{x_1,\cdots,x_{n-1},y_n\}\}\\
  &=&\sum\limits_{i=1}^{n-1}\{y_1,\cdots,y_{i-1},[x_1,\cdots,x_{n-1},y_i]_C,y_{i+1},\cdots,y_n\},
\end{eqnarray*}
which implies that $[L(\frkX),L(\frkY)](y_n)=L(\frkX\circ\frkY)(y_n)$ holds.

By \eqref{eq:n-pre2}, we have
\begin{equation*}
  \{[y_1,\cdots,y_n]_C,x_1,\cdots,x_{n-2},x_{n-1}\}=\sum\limits_{i=1}^{n}(-1)^{n-i}\{y_1,\cdots,\hat{y_i},\cdots,y_n,\{y_i,x_1,\cdots,x_{n-2},x_{n-1}\}\},
\end{equation*}
which implies
$$L(x_1,\cdots,x_{n-2},[y_1,\cdots,y_n]_C)(x_{n-1})=\sum\limits_{i=1}^{n}(-1)^{n-i}L(y_1,\cdots,\hat{y_i},\cdots,y_n)L(x_1,\cdots,x_{n-2},y_i)(x_{n-1}).$$
Thus $(\g;L)$ is a representation of the $n$-Lie algebra $(\g,[-,\cdots,-]_C)$.
\end{proof}

By Proposition \ref{prop6.2pre-lie} and Proposition \ref{pro:n-pre-Lie rep}, we have
\begin{cor}
 Let $\g$ be a vector space with a linear map $\{-,\cdots,-\}:\wedge^{n-1}\g\otimes \g\rightarrow \g$. Then $(\g,\{-,\cdots,-\})$ is an $n$-pre-Lie algebra if and only if the bracket $[-,\cdots,-]_C$ defined by Eq. (\ref{eq:npreC}) is an $n$-Lie algebra structure on $\g$ and the left multiplication operation $L$ defined by Eq. (\ref{40}) gives a representation of this $n$-Lie algebra.
\end{cor}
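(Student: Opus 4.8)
The plan is to prove the equivalence by matching, term for term, the two defining identities \eqref{eq:n-pre1} and \eqref{eq:n-pre2} of an $n$-pre-Lie algebra against the two representation axioms \eqref{eq:rep1} and \eqref{eq:rep2} for the left multiplication $L$. The forward implication requires no new work: if $(\g,\{-,\cdots,-\})$ is an $n$-pre-Lie algebra, then Proposition \ref{prop6.2pre-lie} already yields that $[-,\cdots,-]_C$ is an $n$-Lie algebra, and Proposition \ref{pro:n-pre-Lie rep} yields that $L$ is a representation of it.

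For the converse, the key observation I would exploit is that the computation in the proof of Proposition \ref{pro:n-pre-Lie rep} is actually a chain of \emph{equivalences}, not merely one-way implications. Each step there consists only of unpacking the definition \eqref{40} of $L$ and reordering entries using the skew-symmetry of $\{-,\cdots,-\}$ in its first $n-1$ slots, and every such step is reversible. Concretely, I would first note that $[-,\cdots,-]_C$ being an $n$-Lie algebra is exactly what is needed to even speak of a representation on it; then, assuming $L$ is such a representation, I would read axiom \eqref{eq:rep1} evaluated at $y_n$ and expand via \eqref{40} to recover \eqref{eq:n-pre1} verbatim, and read axiom \eqref{eq:rep2} evaluated at $x_{n-1}$ and expand via \eqref{40} to recover \eqref{eq:n-pre2}. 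Since a representation satisfies both \eqref{eq:rep1} and \eqref{eq:rep2}, both \eqref{eq:n-pre1} and \eqref{eq:n-pre2} hold, so $(\g,\{-,\cdots,-\})$ is an $n$-pre-Lie algebra.

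The only delicate point—and the one I expect to be the main obstacle—is the sign bookkeeping in the second axiom. To pass between $\{x_1,\cdots,x_{n-2},[y_1,\cdots,y_n]_C,x_{n-1}\}$ and $\{[y_1,\cdots,y_n]_C,x_1,\cdots,x_{n-2},x_{n-1}\}$ one must transpose $[y_1,\cdots,y_n]_C$ across $n-2$ entries, producing a factor $(-1)^{n-2}$, and the same reordering must be applied to the inner bracket $\{y_i,x_1,\cdots,x_{n-2},x_{n-1}\}$. I would verify that these two factors cancel, so that no net sign survives and the equivalence \eqref{eq:rep2} $\Leftrightarrow$ \eqref{eq:n-pre2} holds as stated. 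Because of this, I would present the argument as a single sequence of iff-steps anchored on Proposition \ref{pro:n-pre-Lie rep} rather than redoing the full expansion, keeping the proof short.
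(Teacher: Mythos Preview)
Your proposal is correct and matches the paper's approach: the paper simply states that the corollary follows ``By Proposition~\ref{prop6.2pre-lie} and Proposition~\ref{pro:n-pre-Lie rep}'', leaving the converse implicit, and you have correctly identified that the converse is just the observation that the computations in the proof of Proposition~\ref{pro:n-pre-Lie rep} are reversible identifications between \eqref{eq:n-pre1}--\eqref{eq:n-pre2} and \eqref{eq:rep1}--\eqref{eq:rep2}. Your sign check for the second axiom (the common factor $(-1)^{n-2}$ from transposing across $n-2$ slots cancels on both sides) is accurate and is exactly the point the paper suppresses.
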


\begin{pro}\label{pro6.5}
Let $T:V\rightarrow \g$ be a relative Rota-Baxter operator on an $n$-\LP pair $(\frak g,[-,\cdots,-]_{\frak g};\rho)$. Then there exists an $n$-pre-Lie algebra structure on $V$ given by
\begin{equation}
\{u_1,\cdots,u_n\}_T=\rho(Tu_1,\cdots,Tu_{n-1})(u_n), \quad\forall~ u_1,\cdots,u_n\in V.
\end{equation}
Furthermore, $(V,[-,\cdots,-]_T)$ is the sub-adjacent $n$-Lie algebra of the $n$-pre-Lie algebra $(\g,\{-,\cdots,-\}_T)$, where the bracket $[-,\cdots,-]_T:\wedge^n\g\rightarrow \g$ is given by
\begin{equation}\label{eqkj}
  [u_1,\cdots,u_n]_{T}=\sum\limits^{n}_{i=1}(-1)^{n-i}\rho(Tu_1,\cdots,\widehat{Tu_i},\cdots,Tu_n)(u_i)
\end{equation} and $T$ is an $n$-Lie algebra morphism from $(V,[-,\cdots,-]_T)$ to $(\g,[-,\cdots,-]_\g)$.
\end{pro}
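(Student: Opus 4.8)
The plan is to reduce the two defining identities of an $n$-pre-Lie algebra, Eq.~\eqref{eq:n-pre1} and Eq.~\eqref{eq:n-pre2}, to the representation axioms Eq.~\eqref{eq:rep1} and Eq.~\eqref{eq:rep2}, exploiting that the relative Rota-Baxter condition Eq.~\eqref{13} makes $T$ intertwine the candidate bracket $[-,\cdots,-]_T$ with $[-,\cdots,-]_\g$. I would first settle the two subsidiary assertions. Substituting $\{-,\cdots,-\}_T$ into the sub-adjacent formula Eq.~\eqref{eq:npreC} gives $\sum_{i=1}^n(-1)^{n-i}\rho(Tu_1,\cdots,\widehat{Tu_i},\cdots,Tu_n)(u_i)$, which is exactly $[u_1,\cdots,u_n]_T$ of Eq.~\eqref{eqkj}; hence, once the $n$-pre-Lie axioms are verified, the sub-adjacent $n$-Lie algebra is automatically $(V,[-,\cdots,-]_T)$. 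Applying $T$ to $[u_1,\cdots,u_n]_T$ and invoking Eq.~\eqref{13} immediately yields $T[u_1,\cdots,u_n]_T=[Tu_1,\cdots,Tu_n]_\g$, so $T$ is an $n$-Lie algebra morphism; this morphism property is the tool that powers the rest of the argument.

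Next I would check Eq.~\eqref{eq:n-pre1}. After substitution its left-hand side becomes $\rho(Tx_1,\cdots,Tx_{n-1})\rho(Ty_1,\cdots,Ty_{n-1})(y_n)$ and the last term on the right becomes $\rho(Ty_1,\cdots,Ty_{n-1})\rho(Tx_1,\cdots,Tx_{n-1})(y_n)$. In the remaining sum the entry $[x_1,\cdots,x_{n-1},y_i]_C=[x_1,\cdots,x_{n-1},y_i]_T$ occupies the $i$-th slot, and the morphism property rewrites $T[x_1,\cdots,x_{n-1},y_i]_T$ as $[Tx_1,\cdots,Tx_{n-1},Ty_i]_\g$. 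Thus Eq.~\eqref{eq:n-pre1} collapses to
$$[\rho(Tx_1,\cdots,Tx_{n-1}),\rho(Ty_1,\cdots,Ty_{n-1})](y_n)=\sum_{i=1}^{n-1}\rho(Ty_1,\cdots,[Tx_1,\cdots,Tx_{n-1},Ty_i]_\g,\cdots,Ty_{n-1})(y_n),$$
which is precisely Eq.~\eqref{eq:rep1} read on $\frkX=Tx_1\wedge\cdots\wedge Tx_{n-1}$ and $\frkY=Ty_1\wedge\cdots\wedge Ty_{n-1}$, the right-hand side being $\rho(\frkX\circ\frkY)(y_n)$.

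Finally I would treat Eq.~\eqref{eq:n-pre2}. Substituting and again replacing $T[y_1,\cdots,y_n]_C$ by $[Ty_1,\cdots,Ty_n]_\g$, the identity becomes
$$\rho([Ty_1,\cdots,Ty_n]_\g,Tx_1,\cdots,Tx_{n-2})(x_{n-1})=\sum_{i=1}^n(-1)^{n-i}\rho(Ty_1,\cdots,\widehat{Ty_i},\cdots,Ty_n)\rho(Ty_i,Tx_1,\cdots,Tx_{n-2})(x_{n-1}).$$
The one delicate point is matching this with Eq.~\eqref{eq:rep2}, where the $n$-Lie bracket sits in the \emph{last} argument of $\rho$: I would use the skew-symmetry of $\rho$ to move $[Ty_1,\cdots,Ty_n]_\g$ to the end on the left and each $Ty_i$ to the end on the right, every such move contributing a factor $(-1)^{n-2}$ that cancels between the two sides, after which both sides agree with Eq.~\eqref{eq:rep2} for the arguments $Tx_1,\cdots,Tx_{n-2}$ and $Ty_1,\cdots,Ty_n$. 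I expect this sign- and index-bookkeeping in Eq.~\eqref{eq:n-pre2} to be the only real care-point of the proof rather than a genuine obstacle; with it in hand, $(V,\{-,\cdots,-\}_T)$ is an $n$-pre-Lie algebra and all the assertions follow.
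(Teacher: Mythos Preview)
Your proposal is correct and follows essentially the same approach as the paper: both verify Eq.~\eqref{eq:n-pre1} by reducing to Eq.~\eqref{eq:rep1} and Eq.~\eqref{eq:n-pre2} by reducing to Eq.~\eqref{eq:rep2}, using the relative Rota-Baxter condition $T[u_1,\cdots,u_n]_T=[Tu_1,\cdots,Tu_n]_\g$ to convert the occurrences of $[-,\cdots,-]_C=[-,\cdots,-]_T$ inside $\rho$ into $n$-Lie brackets in $\g$. The only cosmetic difference is that you isolate this ``morphism property'' at the outset and invoke it as a named tool, whereas the paper uses it inline and defers the subsidiary assertions to ``the rest follows immediately''; the sign bookkeeping you flag for Eq.~\eqref{eq:n-pre2} is exactly the factor $(-1)^{n-2}$ the paper pulls out when moving the bracket argument to the last slot.
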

\begin{proof}
It is obvious that  $\{-,\cdots,-\}_T\in\Hom(\wedge^{n-1}V\otimes V, V)$. Since $T:V\rightarrow \g$ is a relative Rota-Baxter operator and $(V;\rho)$ is a representation, by Eq. \eqref{eq:rep1} in Definition \ref{defi:rep},  we have
\begin{eqnarray*}
&&\{u_1,\cdots,u_{n-1},\{v_1,\cdots,v_n\}_T\}_T-\{v_1,\cdots,v_{n-1},\{u_1,\cdots,u_{n-1},v_n\}_T\}_T\\
&&-\sum\limits^{n-1}_{i=1}\{v_1,\cdots,v_{i-1},[u_1,\cdots,u_{n-1},v_i]_T,v_{i+1},\cdots,v_n\}_T\\
&=&\rho(Tu_1,\cdots,T{u_{n-1}})\rho(Tv_1,\cdots,Tv_{n-1})(v_n)-\rho(Tv_1,\cdots,Tv_{n-1})\rho(Tu_1,\cdots,T{u_{n-1}})(v_n)\\
&&-\sum\limits^{n-1}_{i=1}\rho(Tv_1,\cdots,Tv_{i-1},[Tu_1,\cdots,Tu_{n-1},Tv_i]_\g,Tv_{i+1},\cdots,Tv_{n-1})(v_n)=0.
\end{eqnarray*}
Similarly, by Eq. \eqref{eq:rep2} in Definition \ref{defi:rep}, we have
\begin{eqnarray*}
&&\{[v_1,\cdots,v_n]_T,u_1,\cdots,u_{n-1}\}_T-\sum\limits_{i=1}^{n}(-1)^{n-i}\{v_1,\cdots,\hat{v_i},\cdots,v_n,\{v_i,u_1,\cdots,u_{n-2},u_{n-1}\}_T\}_T\\
&=&\rho(T[v_1,\cdots,v_n]_T,Tu_1,\cdots,Tu_{n-2})(u_{n-1})-
\sum\limits_{i=1}^{n}(-1)^{n-i}\rho(Tv_1,\cdots,\widehat{Tv_i},\cdots,Tv_n)\rho(Tv_i,Tu_1,\cdots,Tu_{n-2})(u_{n-1})\\
&=&(-1)^{n-2}\big(\rho(Tu_1,\cdots,Tu_{n-2},[Tv_1,\cdots,Tv_n]_T)(u_{n-1})\\
&&-\sum\limits_{i=1}^{n}(-1)^{n-i}\rho(Tv_1,\cdots,\widehat{Tv_i},\cdots,Tv_n)\rho(Tu_1,\cdots,Tu_{n-2},Tv_i)(u_{n-1})\big)=0.
\end{eqnarray*}
The rest follows immediately.
\end{proof}

\begin{pro}\label{proB3.27}
Let  $(\g,[-,\cdots,-]_\g)$ be an $n$-Lie algebra. Then there is a compatible $n$-pre-Lie algebra if and only if there exists an invertible relative Rota-Baxter operator $T:V\rightarrow\g$ on an $n$-\LP pair $(\g;\rho)$. Moreover, the compatible $n$-pre-Lie structure on $\g$ is given by
\begin{equation}
 \{x_1,\cdots,x_n\}_\g=T\rho(x_1,\cdots,x_{n-1})(T^{-1}x_n),\quad\forall~x_1,\cdots,x_n\in\g.
\end{equation}
\end{pro}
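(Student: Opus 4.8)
The plan is to prove both directions of the equivalence, with the bulk of the work split between constructing a relative Rota-Baxter operator from a compatible $n$-pre-Lie structure and recovering such a structure from an invertible operator. For the direction asserting that an invertible relative Rota-Baxter operator yields a compatible $n$-pre-Lie algebra, I would start from the operator $T:V\rightarrow\g$ furnished by Proposition \ref{pro6.5}, which already equips $V$ with an $n$-pre-Lie structure $\{-,\cdots,-\}_T$ whose sub-adjacent $n$-Lie algebra is $(V,[-,\cdots,-]_T)$, and for which $T$ is an $n$-Lie algebra morphism onto $(\g,[-,\cdots,-]_\g)$. Since $T$ is invertible, I would transport this structure along $T$ by setting $\{x_1,\cdots,x_n\}_\g := T\{T^{-1}x_1,\cdots,T^{-1}x_n\}_T$. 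Unwinding the definition of $\{-,\cdots,-\}_T$ gives exactly $T\rho(x_1,\cdots,x_{n-1})(T^{-1}x_n)$, so the stated formula is precisely the push-forward of $\{-,\cdots,-\}_T$ along the isomorphism $T$. Because $n$-pre-Lie identities \eqref{eq:n-pre1} and \eqref{eq:n-pre2} are preserved under transport by a linear isomorphism, $(\g,\{-,\cdots,-\}_\g)$ is an $n$-pre-Lie algebra; moreover the morphism property of $T$ ensures its sub-adjacent bracket $[-,\cdots,-]_C$ coincides with the original $[-,\cdots,-]_\g$, which is what "compatible" requires.

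For the converse, suppose $(\g,\{-,\cdots,-\}_\g)$ is a compatible $n$-pre-Lie algebra, meaning its sub-adjacent $n$-Lie bracket $[-,\cdots,-]_C$ equals the given $[-,\cdots,-]_\g$. By Proposition \ref{pro:n-pre-Lie rep}, the left multiplication $L:\wedge^{n-1}\g\rightarrow\gl(\g)$ defined by \eqref{40} is a representation of $(\g,[-,\cdots,-]_\g)$, so I would take $V=\g$, $\rho=L$, and $T=\id_\g$. The verification that $\id_\g$ satisfies the relative Rota-Baxter identity \eqref{13} reduces, by \eqref{40} and the compatibility $[-,\cdots,-]_C=[-,\cdots,-]_\g$, to the defining equation \eqref{eq:npreC} for the sub-adjacent bracket, and $\id_\g$ is trivially invertible. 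Finally, I would check that the push-forward formula is consistent: with $T=\id$ and $\rho=L$, the expression $T\rho(x_1,\cdots,x_{n-1})(T^{-1}x_n)$ collapses to $L(x_1,\cdots,x_{n-1})(x_n)=\{x_1,\cdots,x_n\}_\g$, confirming that the recovered structure is the one we started from.

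The main obstacle I anticipate is bookkeeping rather than conceptual: verifying that the $n$-pre-Lie axioms genuinely transport along $T$ requires pushing the isomorphism through the nested brackets in \eqref{eq:n-pre1} and \eqref{eq:n-pre2}, and one must carefully confirm that $T$ intertwines $[-,\cdots,-]_T$ on $V$ with $[-,\cdots,-]_\g$ on $\g$ so that the induced sub-adjacent bracket on $\g$ is exactly $[-,\cdots,-]_\g$ and not merely isomorphic to it. The morphism statement at the end of Proposition \ref{pro6.5} is the key input here, since it guarantees $T[u_1,\cdots,u_n]_T=[Tu_1,\cdots,Tu_n]_\g$; applying $T^{-1}$ shows the transported sub-adjacent bracket matches $[-,\cdots,-]_\g$ on the nose. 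Once that identification is in place, both the direct computation of the Rota-Baxter identity for $\id_\g$ and the consistency of the formula are short, so I expect the proof to be essentially a transport-of-structure argument anchored on Propositions \ref{pro6.5} and \ref{pro:n-pre-Lie rep}.
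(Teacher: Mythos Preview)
Your proposal is correct and follows essentially the same approach as the paper: for one direction you transport the $n$-pre-Lie structure on $V$ from Proposition~\ref{pro6.5} along the invertible $T$ and use the morphism property $T[u_1,\cdots,u_n]_T=[Tu_1,\cdots,Tu_n]_\g$ to identify the sub-adjacent bracket with $[-,\cdots,-]_\g$; for the converse you take $(V,\rho,T)=(\g,L,\id_\g)$ via Proposition~\ref{pro:n-pre-Lie rep}. This is exactly the paper's argument, with the paper verifying the sub-adjacent compatibility by a direct one-line computation from the Rota-Baxter identity rather than invoking the morphism clause of Proposition~\ref{pro6.5}.
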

\begin{proof}
Let $T$ be an invertible relative Rota-Baxter operator on an $n$-\LP pair $(\g;\rho)$. Then there exists an $n$-pre-Lie algebra structure on $V$ defined by $$\{u_1,\cdots,u_n\}_T=\rho(Tu_1,\cdots,Tu_{n-1})(u_n), ~\forall~ u_1,\cdots,u_n\in V.$$
Moreover, there is an induced $n$-pre-Lie algebra structure $\{-,\cdots,-\}_\g$ on $\g=T(V)$ given by
$$\{x_1,\cdots,x_n\}_\g=T\{T^{-1}x_1,\cdots,T^{-1}x_n\}_T=T\rho(x_1,\cdots,x_{n-1})(T^{-1}x_n)$$
for all $x_1,\cdots,x_n\in \g$. Since $T$ is a relative Rota-Baxter operator, we have $$[x_1,\cdots,x_n]_\g=T(\sum\limits_{i=1}^{n}(-1)^{n-i}\rho(x_1,\cdots,\hat{x_i},\cdots,x_n)(T^{-1}x_i))=
\sum\limits_{i=1}^{n}(-1)^{n-i}\{x_1,\cdots,\hat{x_i},\cdots,x_n,x_i\}_\g.$$
Therefore $(\g,\{-,\cdots,-\}_\g)$ is a compatible $n$-pre-Lie algebra of $n$-Lie algebra $(\g,[-,\cdots,-]_\g)$.

Conversely, the identity map $\id:\g\rightarrow\g$ is an invertible relative Rota-Baxter operator on the $n$-\LP pair $(\g;L)$.
\end{proof}

\begin{defi}
A {\bf symplectic structure}  on  an $n$-Lie algebra $(\g,[-,\cdots,-]_\g)$ is a nondegenerate skew-symmetric bilinear form $\omega\in\wedge^2\g^*$ satisfies the following condition:
\begin{equation}\label{eq:relationB}
\omega([x_1,\cdots,x_n]_\g,y)=-\sum\limits_{i=1}^{n}(-1)^{n-i}\omega(x_i,[x_1,\cdots,\hat{x_i},\cdots,x_n,y]_\g),\quad\forall~x_1,\cdots,x_n,y\in A.
\end{equation}
\end{defi}

\begin{pro}
Let $(\g,[-,\cdots,-]_\g,\omega)$ be a symplectic $n$-Lie algebra. Then there exists a compatible $n$-pre-Lie algebra structure $\{-,\cdots,-\}_\g$ on $\g$ defined by \begin{equation}
\omega(\{x_1,\cdots,x_n\}_\g,y)=-\omega(x_n,[x_1,\cdots,x_{n-1},y]_\g),\quad\forall~x_1,\cdots,x_n,y\in A.
\end{equation}
\end{pro}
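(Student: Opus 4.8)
The plan is to realize the symplectic form as an invertible relative Rota-Baxter operator and then invoke Proposition \ref{proB3.27}. Since $\omega$ is nondegenerate, the map $\omega^\flat:\g\to\g^*$, $\omega^\flat(x)=\omega(x,-)$, is a linear isomorphism; set $T=(\omega^\flat)^{-1}:\g^*\to\g$. Equip $\g^*$ with the coadjoint representation $\ad^*:\wedge^{n-1}\g\to\gl(\g^*)$ of $(\g,[-,\cdots,-]_\g)$, determined by $\langle\ad^*(x_1,\cdots,x_{n-1})\xi,y\rangle=-\langle\xi,[x_1,\cdots,x_{n-1},y]_\g\rangle$. First I would check that $\ad^*$ is indeed a representation, i.e. that it satisfies \eqref{eq:rep1} and \eqref{eq:rep2}: condition \eqref{eq:rep1} is formal, holding for the dual of any representation since $\frkX\circ\frkY$ is built from the bracket alone, while \eqref{eq:rep2} is the substantial point and follows from the fundamental identity \eqref{1} after transposing operators.

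Next I would show that the symplectic condition \eqref{eq:relationB} is exactly the statement that $T$ is a relative Rota-Baxter operator on $(\g;\ad^*)$. Writing $x_j=T\xi_j$, so that $\xi_j=\omega^\flat(x_j)=\omega(x_j,-)$, and applying the isomorphism $\omega^\flat$ to both sides of \eqref{13}, the left-hand side becomes $\omega([x_1,\cdots,x_n]_\g,-)$ and, by the defining formula for $\ad^*$, the right-hand side becomes $-\sum_{i=1}^n(-1)^{n-i}\omega(x_i,[x_1,\cdots,\widehat{x_i},\cdots,x_n,-]_\g)$. Evaluating against an arbitrary $y\in\g$ and using nondegeneracy, the Rota-Baxter identity \eqref{13} for $T$ is seen to be equivalent to \eqref{eq:relationB}. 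Hence $T$ is an invertible relative Rota-Baxter operator on the $n$-\LP pair $(\g;\ad^*)$.

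By Proposition \ref{proB3.27}, $\g=T(\g^*)$ then carries a compatible $n$-pre-Lie structure given by $\{x_1,\cdots,x_n\}_\g=T\ad^*(x_1,\cdots,x_{n-1})(T^{-1}x_n)$. It remains to identify this with the stated bracket. Applying $\omega^\flat=T^{-1}$ to this equality gives $\omega^\flat(\{x_1,\cdots,x_n\}_\g)=\ad^*(x_1,\cdots,x_{n-1})(\omega^\flat(x_n))$, and evaluating the right-hand side against $y$ via the definition of $\ad^*$ yields $\omega(\{x_1,\cdots,x_n\}_\g,y)=-\omega(x_n,[x_1,\cdots,x_{n-1},y]_\g)$, as claimed; compatibility with $[-,\cdots,-]_\g$ is automatic from Proposition \ref{proB3.27}.

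The main obstacle I anticipate is the verification that $\ad^*$ satisfies \eqref{eq:rep2}, together with careful bookkeeping of the signs $(-1)^{n-i}$ when transposing operators and when comparing the coadjoint action with the bracket; once these are in place, the remainder is a direct translation through the isomorphism $\omega^\flat$. Alternatively, one may bypass the coadjoint representation and Proposition \ref{proB3.27} altogether: define $\{x_1,\cdots,x_n\}_\g$ directly by the displayed formula using nondegeneracy, check that the induced bracket \eqref{eq:npreC} equals $[-,\cdots,-]_\g$ via \eqref{eq:relationB}, and verify \eqref{eq:n-pre1} and \eqref{eq:n-pre2} by pairing with $\omega$ and applying \eqref{1}. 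This is the same content expressed without the operator, and the step corresponding to \eqref{eq:n-pre2} is again the delicate one.
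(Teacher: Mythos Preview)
Your proposal is correct and follows essentially the same route as the paper: define $T=(\omega^\flat)^{-1}:\g^*\to\g$, observe that the symplectic identity \eqref{eq:relationB} is equivalent to $T$ being an invertible relative Rota-Baxter operator on $(\g;\ad^*)$, invoke Proposition~\ref{proB3.27}, and then unwind the formula $\{x_1,\cdots,x_n\}_\g=T\ad^*(x_1,\cdots,x_{n-1})(T^{-1}x_n)$ via $\omega$ to obtain the displayed identity. The paper's proof is terser---it asserts the Rota-Baxter equivalence as ``straightforward to check'' and does not pause over whether $\ad^*$ satisfies \eqref{eq:rep2}---but the architecture is identical.
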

\begin{proof}
Define a linear map $T: \g^*\rightarrow A$ by $\langle T^{-1}x,y\rangle=\omega(x,y)$ for all $x,y\in \g$. It is straightforward to check  that $\omega$ is a symplectic structure on the $n$-Lie algebra $\g$ if and only if $T$ is an invertible relative Rota-Baxter operator on the $n$-\LP pair $(\g;\ad^*)$. Then by Proposition \ref{proB3.27}, there exists a compatible $n$-pre-Lie algebra on $\g$ given by $\{x_1,\cdots,x_n\}_\g=T(\ad_{x_1,\cdots,x_{n-1}}^*T^{-1}x_n)$ for $x_1,\cdots,x_n\in \g$.
Thus we have
\begin{eqnarray*}
  &&\omega(\{x_1,\cdots,x_n\}_\g,y)=\omega(T(\ad_{x_1,\cdots,x_{n-1}}^*T^{-1}x_n),y)\\
  &=&\langle \ad_{x_1,\cdots,x_{n-1}}^*T^{-1}x_n,y\rangle=-\langle T^{-1}x_n,\ad_{x_1,\cdots,x_{n-1}}y\rangle=-\omega(x_n,[x_1,\cdots,x_{n-1},y]_\g).
\end{eqnarray*}
This completes the proof.
\end{proof}

\section{Cohomology of relative Rota-Baxter operators on $n$-\LP pairs}\label{sec:cohomology}
Let $T$ be a relative Rota-Baxter operator on an $n$-\LP pair $(\frak g,[-,\cdots,-]_{\frak g};\rho)$. By Proposition \ref{pro6.5}, $(V,[-,\cdots,-]_T)$ is an $n$-Lie algebra, where the bracket $[-,\cdots,-]_T$ is given by \eqref{eqkj}. Furthermore, we have
\begin{lem}
Let $T$ be a relative Rota-Baxter operator on an $n$-\LP pair $(\frak g,[-,\cdots,-]_{\frak g};\rho)$.
Define $\rho_T: \wedge^{n-1}V \rightarrow \gl(\g)$ by
\begin{equation}\label{eqlem4.2}
 \rho_T(u_1,\cdots,u_{n-1})x=[Tu_1,\cdots,Tu_{n-1},x]_\g-\sum\limits^{n-1}_{i=1}(-1)^{n-i}T\rho(Tu_1,\cdots,\widehat{Tu_i},\cdots,Tu_{n-1},x)(u_i),
 \end{equation}
 where $u_1,\dots,u_{n-1}\in V$ and $x\in\g$. Then $(\g;\rho_{T})$ is a representation of the $n$-Lie algebra $(V,[-,\cdots,-]_{T})$.
\end{lem}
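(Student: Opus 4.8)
The plan is to verify directly the two defining axioms \eqref{eq:rep1} and \eqref{eq:rep2} of a representation for the map $\rho_T$ given by \eqref{eqlem4.2}; its skew-symmetry in the first $n-1$ entries is immediate from the formula, since both $[Tu_1,\cdots,Tu_{n-1},x]_\g$ and the correction sum are skew-symmetric in $u_1,\cdots,u_{n-1}$. It is convenient to split $\rho_T=\rho_T^{\ad}+\delta_T$, where $\rho_T^{\ad}(u_1,\cdots,u_{n-1})x=[Tu_1,\cdots,Tu_{n-1},x]_\g=\ad_{Tu_1,\cdots,Tu_{n-1}}x$ and $\delta_T$ denotes the remaining $T\rho$-term. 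The three ingredients I will keep feeding in are: the representation identities \eqref{eq:rep1}--\eqref{eq:rep2} for $\rho$, the fundamental identity \eqref{1} for $[-,\cdots,-]_\g$, and above all the relative Rota-Baxter identity \eqref{13}, which is what allows a bracket $[Tv_1,\cdots,Tv_n]_\g$ to be rewritten as a sum of $T\rho(\cdots)$ terms.

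First I would record two reductions. Since $T$ is an $n$-Lie algebra morphism from $(V,[-,\cdots,-]_T)$ to $(\g,[-,\cdots,-]_\g)$ by Proposition \ref{pro6.5}, the piece $\rho_T^{\ad}$ is exactly the pullback along $T$ of the adjoint representation of $(\g,[-,\cdots,-]_\g)$; pulling a representation back along a morphism again yields a representation (an easy check from the morphism property), so $\rho_T^{\ad}$ already satisfies \eqref{eq:rep1} and \eqref{eq:rep2}, and only the cross-terms involving $\delta_T$ must be controlled. Second, applying \eqref{13} with arguments $(u_1,\cdots,u_{n-1},v)$ gives the intertwining identity $\rho_T(u_1,\cdots,u_{n-1})(Tv)=T\rho(Tu_1,\cdots,Tu_{n-1})(v)=T\big(L(u_1,\cdots,u_{n-1})v\big)$, with $L$ as in \eqref{40}; this says $T$ is a morphism from the representation $(V;L)$ of Proposition \ref{pro:n-pre-Lie rep} to $(\g;\rho_T)$, and it is a useful consistency check that guides the cancellations.

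For axiom \eqref{eq:rep2} I would substitute $[v_1,\cdots,v_n]_T$ into the last slot of $\rho_T$, use $T[v_1,\cdots,v_n]_T=[Tv_1,\cdots,Tv_n]_\g$ to turn the leading term into $[Tu_1,\cdots,Tu_{n-2},[Tv_1,\cdots,Tv_n]_\g,x]_\g$, expand it by the fundamental identity \eqref{1}, and expand the $\delta_T$-term by \eqref{eq:rep2} for $\rho$; the right-hand side is expanded by writing each factor $\rho_T=\rho_T^{\ad}+\delta_T$ so that each composition splits into four products, and \eqref{13} is then used to match the $T$-image brackets against the $\delta_T$ contributions. For axiom \eqref{eq:rep1} I would expand $[\rho_T(\frkX),\rho_T(\frkY)]$ into the four commutators coming from $\rho_T=\rho_T^{\ad}+\delta_T$: the $[\rho_T^{\ad},\rho_T^{\ad}]$ piece equals $\rho_T^{\ad}(\frkX\circ\frkY)$ by the pullback remark, while the remaining three must be shown to sum to $\delta_T(\frkX\circ\frkY)$, where $\frkX\circ\frkY$ is computed with the bracket $[-,\cdots,-]_T$ and $T(\frkX\circ\frkY)=T\frkX\circ T\frkY$. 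The main obstacle here is purely the combinatorial and sign bookkeeping: one has to group a large number of $\rho(\cdots)\rho(\cdots)$ and $T\rho(\cdots)$ terms and apply \eqref{13} at precisely the right places so that the non-matching terms cancel in pairs; the skew-symmetry of $\rho$ and of $[-,\cdots,-]_\g$, together with the reduction that $\rho_T^{\ad}$ is already a representation, keeps the number of genuinely independent terms manageable.
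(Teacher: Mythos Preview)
Your direct-verification plan is sound and would go through, but it is a genuinely different route from the paper's argument. The paper avoids the axiom-by-axiom check entirely: it defines $\bar{T}:\g\oplus V\to\g\oplus V$ by $\bar{T}(x+v)=Tv$, observes that $\bar{T}$ is a Nijenhuis operator on the semidirect product $n$-Lie algebra $\g\ltimes_\rho V$ (citing \cite{LiuJ16}), and then invokes the general fact that a Nijenhuis operator $N$ produces a new $n$-Lie algebra via the deformed bracket $[-,\cdots,-]^{n-1}_N$. A single explicit computation of $[x_1+u_1,\cdots,x_n+u_n]_{\bar T}$ then identifies this deformed bracket as $[u_1,\cdots,u_n]_T+\sum_{i=1}^n(-1)^{n-i}\rho_T(u_1,\cdots,\widehat{u_i},\cdots,u_n)(x_i)$, exhibiting $(\g\oplus V,[-,\cdots,-]_{\bar T})$ as a semidirect product $V\ltimes_{\rho_T}\g$; the representation axioms for $\rho_T$ then hold automatically because the deformed bracket is already known to be an $n$-Lie bracket.

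The trade-off is this: your approach is self-contained and makes transparent exactly which pieces of \eqref{1}, \eqref{eq:rep1}--\eqref{eq:rep2}, and \eqref{13} are responsible for each cancellation, and your splitting $\rho_T=\rho_T^{\ad}+\delta_T$ with the pullback observation genuinely trims the work; but the cross-terms still require substantial bookkeeping that you have only sketched. The paper's Nijenhuis argument outsources all of that to a single external result from \cite{LiuJ16}, replacing two axiom verifications by one bracket computation, at the cost of importing the Nijenhuis-operator machinery.
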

\begin{proof}
Let $T$ be a relative Rota-Baxter operator on an $n$-\LP pair $(\frak g,[-,\cdots,-]_{\frak g};\rho)$.
Define $\bar{T}: \g\oplus V\rightarrow \g\oplus V$ by
$$\bar{T}(x+v)=Tv, \qquad \forall x\in \g, v\in V.$$
It is obvious that $\bar{T}\circ\bar{T}=0$. Furthermore, $\bar{T}$ is a Nijenhuis operator on the semidirect product $n$-Lie algebra $\g\ltimes_{\rho}V$ (\cite{LiuJ16}).
\emptycomment{$[Nx_1,\cdots,Nx_n]_{\ltimes_{\rho}}=N(\sum\limits_{i_1<\cdots<i_{n-1}}[\cdots,Nx_{i_1},\cdots,Nx_{i_{n-1}},\cdots]_{\ltimes_{\rho}})$
\begin{eqnarray*}
  [Tv_1,\cdots,Tv_n]_{\g} &=& [\bar{T}(x_1+v_1),\cdots,\bar{T}(x_n+v_n)]_{\ltimes_{\rho}}\\
  &=&\bar{T}\sum\limits_{i_1<\cdots<i_{n-1}}[\cdots,\bar{T}(x_{i_1}+v_{i_1}),\cdots,\bar{T}(x_{i_{n-1}}+v_{i_{n-1}}),\cdots]_{\ltimes_{\rho}}\\
  &=&\bar{T}\sum\limits^{n}_{j=1}[Tv_1,\cdots,x_j+v_j,\cdots,Tv_n]_{\ltimes_{\rho}}\\
  &=&\bar{T}\sum\limits^{n}_{j=1}(-1)^{n-j}\rho(Tv_1,\cdots,\widehat{Tv_j},\cdots,Tv_n)v_j\\
  &=&\sum\limits^{n}_{j=1}(-1)^{n-j}T\rho(Tv_1,\cdots,\widehat{Tv_j},\cdots,Tv_n)v_j,
\end{eqnarray*}
where $j\in n/ \{i_1,i_2,\cdots,i_{n-1}\}$, which is satisfied with Definition \ref{rtbo:defi}.
Thus $(\g\oplus V,[-,\cdots,-]_{\bar{T}})$ is an $n$-Lie algebra, where the bracket $[-,\cdots,-]_{\bar{T}})$ is given
by $$[x_1,\cdots,x_n]_{\bar{T}}=\sum\limits_{i_1<\cdots<i_{n-1}}[\cdots,\bar{T}x_{i_1},\cdots,\bar{T}x_{i_{n-1}},\cdots]_{\ltimes_{\rho}}-\bar{T}
\sum\limits_{i_1<\cdots<i_{n-2}}[\cdots,\bar{T}x_{i_1},\cdots,\bar{T}x_{i_{n-2}},\cdots]_{\ltimes_{\rho}}.$$}
It was shown in \cite{LiuJ16} that if $N$ is a Nijenhuis operator on an $n$-Lie algebra $(\g,[-,\cdots,-]_{\frkg})$, $(\g,[-,\cdots,-]^{n-1}_{N})$ is an $n$-Lie algebra, where
\begin{eqnarray*}
\label{25}\{-,\cdots,-\}^{j}_{N}&=&\sum\limits_{i_1<\cdots<i_{j}}[\cdots,Nx_{i_1},\cdots,Nx_{i_{j}},\cdots]_\g-N[x_1,\cdots,x_n]^{j-1}_{N},\\
\label{26}\{x_1,\cdots,x_n\}^1_{N}&=&\sum\limits^{n}_{i=1}[x_1,\cdots,Nx_i,\cdots,x_n]_\g-N[x_1,\cdots,x_n]_\g,\quad 2\leq j\leq n-1.
\end{eqnarray*}
By a direct calculation, we have
\begin{eqnarray*}
  &&[x_1+u_1,\cdots,x_n+u_n]_{\bar{T}} \\
  &=& \sum\limits_{i_1<\cdots<i_{n-1}}[\cdots,\bar{T}(x_{i_1}+u_{i_1}),\cdots,\bar{T}(x_{i_{n-1}}+u_{i_{n-1}}),\cdots]_{\rho}\\
  &&-\bar{T}
\sum\limits_{i_1<\cdots<i_{n-2}}[\cdots,\bar{T}(x_{i_1}+u_{i_1}),\cdots,\bar{T}(x_{i_{n-2}}+u_{i_{n-2}}),\cdots]_{\rho}\\
  &=& \sum\limits^{n}_{i=1}[Tu_1,\cdots,x_i+u_i,\cdots,Tu_n]_{\rho}-\sum\limits^{n}_{i=1}\sum\limits_{j> i}(-1)^{n-j}T\rho(Tu_1,\cdots,x_i,\cdots,
  \widehat{Tu_j},\cdots,Tu_n)(u_j)\\
  &&-\sum\limits^{n}_{i=1}\sum\limits_{j<i}(-1)^{n-j}T\rho(Tu_1,\cdots,\widehat{Tu_j},\cdots,x_i,
  \cdots,Tu_n)(u_j)\\
  &=&\sum\limits^{n}_{i=1}(-1)^{n-i}[Tu_1,\cdots,\widehat{Tu_i},\cdots,Tu_n,x_i]_\g+\sum\limits^{n}_{i=1}(-1)^{n-i}\rho(Tu_1,\cdots,\widehat{Tu_i},\cdots,Tu_n)(u_i)\\
  &&-\sum\limits^{n}_{i=1}\sum\limits_{j>i}(-1)^{n-j}(-1)^{n-i-1}T\rho(Tu_1,\cdots,\hat{x_i},\cdots,
  \widehat{Tu_j},\cdots,Tu_n,x_i)(u_j)\\
  &&-\sum\limits^{n}_{i=1}\sum\limits_{j<i}(-1)^{n-j}(-1)^{n-i}T\rho(Tu_1,\cdots,\widehat{Tu_j},\cdots,\hat{x_i},\cdots,
  Tu_n,x_i)(u_j)\\
  &=&\sum\limits^{n}_{i=1}(-1)^{n-i}\big([Tu_1,\cdots,\widehat{Tu_i},\cdots,Tu_n,x_i]_\g-\sum\limits^{n}_{i=1}\sum\limits_{j>i}(-1)^{n-j-1}T\rho(Tu_1,\cdots,
  \widehat{Tu_i},\cdots,\widehat{Tu_j},\cdots,Tu_n,x_i)(u_j)\\
  &&-\sum\limits^{n}_{i=1}\sum\limits_{j<i}(-1)^{n-j}T\rho(Tu_1,\cdots,\widehat{Tu_j},\cdots,\widehat{Tu_i},\cdots,
  Tu_n,x_i)(u_j)\big)+\sum\limits^{n}_{i=1}(-1)^{n-i}\rho(Tu_1,\cdots,\widehat{Tu_i},\cdots,Tu_n)(u_i)\\
  &=&[u_1,\cdots,u_n]_{T}+\sum\limits^{n}_{i=1}(-1)^{n-i}\rho_{T}(u_1,\cdots,\widehat{u_i},\cdots,u_n)(x_i),
  \end{eqnarray*}
which implies that $(\g;\rho_{T})$ is a representation of the $n$-Lie algebra $(V,[-,\cdots,-]_{T})$.
\end{proof}
Let $\partial_T:C^{m}_{\nl}(V;\g)\rightarrow C^{m+1}_{\nl}(V;\g)~(m\geq1)$ be the corresponding coboundary operator of the $n$-Lie algebra $(V,[-,\cdots,-]_{T})$ with coefficients in the representation $(\g;\rho_{T})$. More precisely, $\partial_T:C^{m}_{\nl}(V;\g)\rightarrow C^{m+1}_{\nl}(V;\g) ~(m\geq 1)$ is given by
\begin{eqnarray*}
\nonumber&&(\partial_T f)(\mathfrak{U}_1,\cdots,\mathfrak{U}_m,u_{m+1})\\
&=&\sum\limits_{1\leq j< k\leq m}(-1)^jf(\mathfrak{U}_1,\cdots,\widehat{\mathfrak{U}_j},\cdots,\mathfrak{U}_{k-1},\mathfrak{U}_j\circ \mathfrak{U}_k,\mathfrak{U}_{k+1},\cdots,\mathfrak{U}_{m},u_{m+1})\\
&&+\sum\limits_{j=1}^m(-1)^{j}f(\mathfrak{U}_1,\cdots,\widehat{\mathfrak{U}_j},\cdots,\mathfrak{U}_{m},
[\mathfrak U_{j},u_{m+1}]_{T})\\
&&+\sum\limits_{j=1}^m(-1)^{j+1}\rho_T(\mathfrak U_{j})f(\mathfrak{U}_1,\cdots,\widehat{\mathfrak{U}_j},\cdots,\mathfrak{U}_{m},
u_{m+1})\\
&&+\sum\limits_{i=1}^{n-1}(-1)^{n+m-i+1}\rho_T(u^1_m,\cdots,\hat{u^i_m},\cdots,u^{n-1}_m,u_{m+1})f(\mathfrak{U}_1,\cdots,\mathfrak{U}_{m-1},
u^i_{m})
\end{eqnarray*}
for any $\mathfrak{U}_i=u_i^1\wedge\cdots\wedge u_i^{n-1}\in \wedge^{n-1}V, ~i=1,2,\cdots,m, u_{m+1}\in V.$

For any $\frkX\in \wedge^{n-1}\g$, we define $\delta_T(\frkX):V\rightarrow \g$ by
\begin{equation}\label{eq:0-cocycle}
\delta_T(\frkX)v=T\rho(\frkX)v-[\frkX,Tv]_{\g}, ~v\in V.
\end{equation}

\begin{pro}
Let $T$ be a relative Rota-Baxter operator on an $n$-\LP pair $(\frak g,[-,\cdots,-]_{\frak g};\rho)$. Then $\delta_T(\frkX)$ is a $1$-cocycle on the $n$-Lie algebra $(V,[-,\cdots,-]_T)$ with coefficients in $(\g;\rho_{T})$.
\end{pro}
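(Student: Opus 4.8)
The plan is to unwind the assertion into a concrete identity and then recognise that identity as the statement that a naturally associated operator is a derivation of the Nijenhuis-deformed bracket. Since $\delta_T(\frkX)\in\Hom(V,\g)=C^1_{\nl}(V;\g)$, the claim $\partial_T\delta_T(\frkX)=0$ is, by the $m=1$ instance of the coboundary formula for $(V,[-,\cdots,-]_T)$ with coefficients in $(\g;\rho_T)$, equivalent to
\[
\delta_T(\frkX)[u_1,\cdots,u_{n-1},v]_T=\rho_T(u_1,\cdots,u_{n-1})\delta_T(\frkX)v+\sum_{i=1}^{n-1}(-1)^{n-i}\rho_T(u_1,\cdots,\widehat{u_i},\cdots,u_{n-1},v)\delta_T(\frkX)u_i
\]
for all $u_1,\cdots,u_{n-1},v\in V$; that is, $\delta_T(\frkX)$ must be a crossed homomorphism (a $1$-cocycle) from $(V,[-,\cdots,-]_T)$ to the representation $(\g;\rho_T)$.

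First I would record the operator reformulation of $\delta_T(\frkX)$. With $\bar T$ the square-zero operator on $\g\oplus V$ from the preceding lemma and $\ad_\frkX:=[\frkX,-]_\rho$ the inner derivation of the semidirect product $\g\ltimes_\rho V$, a one-line computation from \eqref{15} gives $\ad_\frkX(x+u)=[\frkX,x]_\g+\rho(\frkX)u$, whence
\[
(\bar T\circ\ad_\frkX-\ad_\frkX\circ\bar T)(x+u)=T\rho(\frkX)u-[\frkX,Tu]_\g=\delta_T(\frkX)u.
\]
Thus the operator $\widehat\delta:=\bar T\,\ad_\frkX-\ad_\frkX\,\bar T$ vanishes on $\g$ and restricts on $V$ to $\delta_T(\frkX)$. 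The key point is that, by the computation carried out in the previous lemma, the deformed bracket $[-,\cdots,-]_{\bar T}$ is exactly the semidirect product $n$-Lie algebra $V\ltimes_{\rho_T}\g$ (now with $V$ the algebra and $\g$ the module). A direct check then shows that a linear endomorphism of $\g\oplus V$ that kills $\g$ and sends $V$ into $\g$ is a derivation of $V\ltimes_{\rho_T}\g$ precisely when its restriction to $V$ satisfies the displayed identity, since any bracket with two module arguments vanishes and so no further constraints arise. Hence it suffices to prove that $\widehat\delta=\bar T\,\ad_\frkX-\ad_\frkX\,\bar T$ is a derivation of $(\g\oplus V,[-,\cdots,-]_{\bar T})$.

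I would establish this last statement by combining the fact that $\ad_\frkX$ is a derivation of $(\g\ltimes_\rho V,[-,\cdots,-]_\rho)$ with the Nijenhuis identity for $\bar T$ (\cite{LiuJ16}), using the two simplifications $\bar T^2=0$ and $\bar T\,\ad_\frkX\,\bar T=0$; the latter holds because $\ad_\frkX\bar T$ has image in $\g=\ker\bar T$. The hard part will be the bookkeeping in this step for general $n$: the deformed bracket of \cite{LiuJ16} is defined by an $(n-1)$-fold recursion, so expanding $\widehat\delta[w_1,\cdots,w_n]_{\bar T}$ and $\sum_i[w_1,\cdots,\widehat\delta w_i,\cdots,w_n]_{\bar T}$ produces many terms; the cross-terms are matched using the vanishing of the Nijenhuis torsion together with $\bar T^2=0$, and the only a priori dangerous contribution is the one factoring through $\bar T\,\ad_\frkX\,\bar T$, which is zero here. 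As a fallback that avoids the Nijenhuis machinery, one may instead verify the displayed identity directly: substitute $\delta_T(\frkX)w=T\rho(\frkX)w-[\frkX,Tw]_\g$ together with the definitions \eqref{eqkj} of $[-,\cdots,-]_T$ and \eqref{eqlem4.2} of $\rho_T$, and cancel all resulting terms using the relative Rota-Baxter relation \eqref{13}, the representation axioms \eqref{eq:rep1}--\eqref{eq:rep2}, and the fundamental identity \eqref{1} of $\g$.
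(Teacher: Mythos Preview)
Your fallback is exactly what the paper does: it expands $(\partial_T\delta_T(\frkX))(u_1,\cdots,u_n)$ directly from the definitions \eqref{eq:0-cocycle}, \eqref{eqkj}, \eqref{eqlem4.2}, and cancels everything using the relative Rota--Baxter identity \eqref{13}, the fundamental identity \eqref{1}, and the representation axiom \eqref{eq:rep1}. So that route is certainly correct.

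Your primary route is genuinely different and more conceptual. The identification $\widehat\delta=\bar T\,\ad_\frkX-\ad_\frkX\,\bar T$ is clean and correct, as is the observation that a map killing $\g$ and sending $V$ into $\g$ is a derivation of the semidirect product $V\ltimes_{\rho_T}\g$ precisely when its restriction to $V$ is a $1$-cocycle. What this buys you is an operator-theoretic explanation for why $\delta_T(\frkX)$ is closed: it is a graded commutator of the Nijenhuis operator with an inner derivation. The paper never makes this explicit; it simply grinds out the cancellation.

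The soft spot is step~5. You are asserting a lemma of independent interest: for a Nijenhuis operator $N$ on an $n$-Lie algebra with $N^2=0$ and a derivation $D$ of the original bracket satisfying $NDN=0$, the commutator $[N,D]$ is a derivation of the deformed bracket $[-,\cdots,-]^{\,n-1}_N$. For $n=2$ this is a short calculation using the Nijenhuis identity $[Nx,Ny]=N[x,y]_N$; for general $n$ you have to unwind the $(n-1)$-step recursion defining $[-,\cdots,-]^{\,n-1}_N$, and the bookkeeping is comparable to the direct proof. So unless you actually carry out (or cite) that Nijenhuis lemma, your argument is not shorter than the paper's --- it has only relocated the computation. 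If you do prove the lemma once and for all, however, you get a statement that is reusable beyond this particular cocycle, which the paper's bare-hands proof does not give.
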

\begin{proof}
For any $u_1,\cdots,u_n\in V$, by the fact that $T$ is a relative Rota-Baxter operator, we have
\begin{eqnarray*}
   &&(\partial_T\delta_T(\frkX))(u_1,\cdots,u_n)\\
   &=&\sum\limits_{i=1}^n(-1)^{n-i}[Tu_1,\cdots,\widehat{Tu_i},\cdots,Tu_n,T\rho(\frkX)u_i-[\frkX,Tu_i]_{\g}]_{\g}\\
   &&-T\rho(\frkX)(\sum\limits_{i=1}^n(-1)^{n-i}\rho(Tu_1,\cdots,\widehat{Tu_i},\cdots,Tu_n)(u_i))+[\frkX,T(\sum\limits_{i=1}^n(-1)^{n-i}\rho(Tu_1,\cdots,\widehat{Tu_i},\cdots,Tu_n)(u_i))]_\g\\
   &&-T(\sum\limits_{i=1}^n\sum\limits_{j=1,j\neq i}^n(-1)^{n-j}\rho(Tu_1,\cdots,\widehat{Tu_j},\cdots,Tu_{i-1},T\rho(\frkX)(u_i)-[\frkX,Tu_i]_\g,Tu_{i+1},\cdots,Tu_n)(u_j))\\
   &=&\sum\limits_{i=1}^n(-1)^{n-i}[Tu_1,\cdots,\widehat{Tu_i},\cdots,Tu_n,T\rho(\frkX)(u_i)]_{\g}
   -\sum\limits_{i=1}^n(-1)^{n-i}[Tu_1,\cdots,\widehat{Tu_i},\cdots,Tu_n,[\frkX,Tu_i]_{\g}]_\g\\
   &&-T\rho(\frkX)(\sum\limits_{i=1}^n(-1)^{n-i}\rho(Tu_1,\cdots,\widehat{Tu_i},\cdots,Tu_n)(u_i))+[\frkX,[Tu_1,\cdots,Tu_i,\cdots,Tu_n]_\g]_{\g}\\
   &&-T(\sum\limits_{i=1}^n\sum\limits_{j=1,j\neq i}^n(-1)^{n-j}\rho(Tu_1,\cdots,\widehat{Tu_j},\cdots,Tu_{i-1},T\rho(\frkX)(u_i)-[\frkX,Tu_i]_\g,Tu_{i+1},\cdots,Tu_n)(u_j))\\
   &{=}&\sum\limits_{i=1}^n[Tu_1,\cdots,T\rho(\frkX)u_i,\cdots,Tu_n]_{\g}
   -T\rho(\frkX)(\sum\limits_{i=1}^n(-1)^{n-i}\rho(Tu_1,\cdots,\widehat{Tu_i},\cdots,Tu_n)(u_i))\\
   &&-T(\sum\limits_{i=1}^n\sum\limits_{j=1,j\neq i}^n(-1)^{n-j}\rho(Tu_1,\cdots,\widehat{Tu_j},\cdots,Tu_{i-1},T\rho(\frkX)(u_i)-[\frkX,Tu_i]_\g,Tu_{i+1},\cdots,Tu_n)(u_j))\\
   &{=}&\sum\limits_{i=1}^n(-1)^{n-i}T\rho(Tu_1,\cdots,\widehat{Tu_i},\cdots,Tu_n)\rho(\frkX)(u_i)\\
   &&+\sum\limits_{i=1}^n\sum\limits_{j=1,j\neq i}
   (-1)^{n-j}T\rho(Tu_1,\cdots,\widehat{Tu_j},\cdots,T\rho(\frkX)u_i,\cdots,Tu_n)(u_j)\\
   &&-\sum\limits_{j=1}^n(-1)^{n-j}T\rho(\frkX)\rho(Tu_1,\cdots,\widehat{Tu_j},\cdots,Tu_n)(u_j)\\
   &&-\sum\limits_{i=1}^n\sum\limits_{j=1,j\neq i}
   (-1)^{n-j}T\rho(Tu_1,\cdots,\widehat{Tu_j},\cdots,T\rho(\frkX)u_i,\cdots,Tu_n)(u_j)\\
   &&+\sum\limits_{i=1}^n\sum\limits_{j=1,j\neq i}^n(-1)^{n-j}T\rho(Tu_1,\cdots,\widehat{Tu_j},\cdots,Tu_{i-1},[\frkX,Tu_i]_\g,Tu_{i+1},\cdots,Tu_n)(u_j)\\
   &=&\sum\limits_{i=1}^n(-1)^{n-i}T\rho(Tu_1,\cdots,\widehat{Tu_i},\cdots,Tu_n)\rho(\frkX)(u_i)\\
   &&-\sum\limits_{j=1}^n(-1)^{n-j}T\rho(\frkX)\rho(Tu_1,\cdots,\widehat{Tu_j},\cdots,Tu_n)(u_j)\\
   &&+\sum\limits_{i=1}^n\sum\limits_{j=1,j\neq i}^n(-1)^{n-j}T\rho(Tu_1,\cdots,\widehat{Tu_j},\cdots,Tu_{i-1},[\frkX,Tu_i]_\g,Tu_{i+1},\cdots,Tu_n)(u_j)\\
   &=&0,
\end{eqnarray*}
which implies that $\delta_T(\frkX)$ is a $1$-cocycle on the $n$-Lie algebra $(V,[-,\cdots,-]_T)$ with coefficients in $(\g;\rho_{T})$.
\end{proof}
\begin{defi}
Let $T$ be a relative Rota-Baxter operator on an $n$-\LP pair $(\frak g,[-,\cdots,-]_{\frak g};\rho)$.
Define the set of $m$-cochains by
\begin{eqnarray}
   C^m_T(V;\g)=
   \begin{cases}
     C^m_{\nl}(V;\g) & m\geq1,\\
    \wedge^{n-1}\g&m=0.
     \end{cases}
\end{eqnarray}
Define $d:C^m_T(V;\g)\rightarrow C_T^{m+1}(V;\g)$ by
 \begin{eqnarray}
   d=
   \begin{cases}
     \partial_T & m\geq1,\\
    \delta _T& m=0,
     \end{cases}
\end{eqnarray}
where $\delta_T$ is given by Eq. \eqref{eq:0-cocycle}.
\end{defi}

Denote the set of $m$-cocycles by $\huaZ^m(V;\g)$ and the set of $m$-coboundaries by $\huaB^m(V;\g)$. The $m$-th cohomology group for the relative Rota-Baxter operator $T$ is denoted by \begin{equation}\label{31}
  \huaH^m(V;\g)=\huaZ^m(V;\g)/\huaB^m(V;\g),~m\geq 0.
\end{equation}
The relation between the coboundary operator $d$ and the differential $l^T_1$ defined by Eq. (\ref{eq:l^T}) using the Maurer-Cartan element $T$ of the Lie $n$-algebra $(C^*(V,\g),\{-,\cdots,-\})$ is given by the following theorem.
\begin{thm}\label{th:relationdl}
Let $T$ be a relative Rota-Baxter operator on an $n$-\LP pair $(\frak g,[-,\cdots,-]_{\frak g};\rho)$.
Then we have $df=(-1)^{m-1}l^T_1f, ~\forall~ f \in C^m_T(V;\g)$.
\end{thm}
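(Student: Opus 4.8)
The plan is to recognise $l^{T}_{1}$ as, up to a sign, the coboundary operator of the semidirect product $n$-Lie algebra already produced by $\overline{T}$, and then to invoke the Maurer--Cartan description of coboundary operators from Section~\ref{sec:MC-characterizations LP pairs}. Write $\Delta=\mu+\rho$ and note that the lift $\hat{T}$ of $T$ coincides with the operator $\overline{T}$. Since $T$ is a Maurer--Cartan element of $(C^{*}(V,\g),\{-,\cdots,-\})$, the twisted differential unfolds, via the definition \eqref{eq16} of the bracket, as
\[
l^{T}_{1}(f)=\frac{1}{(n-1)!}\{\underbrace{T,\cdots,T}_{n-1},f\}=\frac{1}{(n-1)!}[(\mathrm{ad}_{\hat{T}})^{n-1}\Delta,\,f]_{\nl},\qquad \mathrm{ad}_{\hat{T}}(-)=[-,\hat{T}]_{\nl},
\]
where (as in the proof of Theorem~\ref{thm:Maurer-Cartan element}) the iterated bracket already lands in $\mathfrak h=C^{*}(V,\g)$, so the projection is the identity. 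Putting $\Theta:=\frac{1}{(n-1)!}(\mathrm{ad}_{\hat{T}})^{n-1}\Delta$, a computation with Lemma~\ref{thm:bidegree} shows $\Theta\in C^{1}_{\nl}(\g\oplus V,\g\oplus V)$ is homogeneous of bidegree $0\mid n-1$, hence a genuine $n$-ary bracket on $\g\oplus V$, and $l^{T}_{1}(f)=[\Theta,f]_{\nl}$ for every $f$.

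For $m\geq1$ the task reduces to identifying $\Theta$. Because $\hat{T}=\overline{T}$ and $\overline{T}^{2}=0$, the element $\frac{1}{(n-1)!}(\mathrm{ad}_{\hat{T}})^{n-1}\Delta$ is exactly the Nijenhuis deformation of $\Delta$ by $\overline{T}$, i.e.\ the bracket $[-,\cdots,-]_{\overline{T}}$ computed in the preceding lemma. That computation gives
\[
\Theta\big((x_{1}+u_{1}),\cdots,(x_{n}+u_{n})\big)=[u_{1},\cdots,u_{n}]_{T}+\sum_{i=1}^{n}(-1)^{n-i}\rho_{T}(u_{1},\cdots,\widehat{u_{i}},\cdots,u_{n})(x_{i}),
\]
so $\Theta$ is precisely the Maurer--Cartan element $\pi_{T}$ of $(C^{*}_{\nl}(V\oplus\g,V\oplus\g),[-,-]_{\nl})$ encoding the semidirect product $n$-Lie algebra $(V,[-,\cdots,-]_{T})\ltimes_{\rho_{T}}\g$.

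With $\Theta=\pi_{T}$ in hand, the case $m\geq1$ follows immediately: a cochain $f\in C^{m}_{\nl}(V;\g)$ is, through its lift, an element of $C^{m-1}_{\nl}(V\oplus\g,V\oplus\g)$, and the coboundary operator of the $n$-\LP pair $(V,[-,\cdots,-]_{T};\rho_{T})$ with coefficients in $\g$ is $\partial_{T}f=(-1)^{m-1}[\pi_{T},f]_{\nl}$, exactly as $\partial_{\ad}f=(-1)^{p}[\mu,f]_{\nl}$ in Section~\ref{sec:MC-characterizations LP pairs}. Hence $df=\partial_{T}f=(-1)^{m-1}[\Theta,f]_{\nl}=(-1)^{m-1}l^{T}_{1}f$. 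For $m=0$ one checks $\delta_{T}=-\,l^{T}_{1}$ by a direct expansion: evaluating the derived-bracket expression for $l^{T}_{1}(\frkX)$ on $v\in V$ returns $[\frkX,Tv]_{\g}-T\rho(\frkX)v$, which is $-\delta_{T}(\frkX)v$, matching the sign $(-1)^{m-1}$ at $m=0$.

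The main obstacle is the identification $\frac{1}{(n-1)!}(\mathrm{ad}_{\hat{T}})^{n-1}\Delta=[-,\cdots,-]_{\overline{T}}$: one must pin down the constant $\tfrac{1}{(n-1)!}$ and the intervening signs relating the graded-Lie-algebra iterated bracket to the explicit Nijenhuis deformation. The hypothesis $\overline{T}^{2}=0$ is what makes this clean, since it annihilates every term in which $\overline{T}$ is inserted twice into the same argument, leaving exactly the $(n-1)!$ equal contributions that spread $\overline{T}$ over $n-1$ of the $n$ slots of $\Delta$. The secondary delicate point is the case $m=0$: the complex $C^{*}_{T}(V;\g)$ is shifted by one degree relative to the Lie $n$-algebra grading on $C^{*}(V,\g)$, so $\wedge^{n-1}\g$ must be treated as an adjoined bottom piece and the meaning of $l^{T}_{1}$ on it fixed with care before the identity $\delta_{T}=-l^{T}_{1}$ can be read off.
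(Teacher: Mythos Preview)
Your argument is correct in outline and takes a more structural route than the paper's. The paper proceeds by brute force: it first computes $[\cdots[[\mu+\rho,T]_{\nl},T]_{\nl},\cdots,T]_{\nl}$ on elements of $\g\oplus V$ (obtaining equations \eqref{eq:RB-coboundary1}--\eqref{eq:RB-coboundary2}), then plugs this into the explicit formula for the graded bracket $[-,-]_{\nl}$ to expand $\{T,\cdots,T,f\}(\frkU_1,\cdots,\frkU_m,u)$ term by term, and matches against the four blocks of $\partial_T f$. Your approach instead recognizes that $\Theta=\frac{1}{(n-1)!}(\mathrm{ad}_{\hat T})^{n-1}\Delta$ is the semidirect-product Maurer--Cartan element $\pi_T$ for the $n$-\LP pair $(V,[-,\cdots,-]_T;\rho_T)$, and then invokes the general principle that $\partial_{\rho_T}f=(-1)^{m-1}[\pi_T,f]_{\nl}$. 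This avoids the explicit unpacking of shuffles and is conceptually cleaner.

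Two points deserve explicit justification if you pursue this route. First, the identity $\frac{1}{(n-1)!}(\mathrm{ad}_{\hat T})^{n-1}\Delta=[-,\cdots,-]_{\overline T}^{n-1}$ is not proved in the preceding lemma; that lemma only computes $[-,\cdots,-]_{\overline T}$ from the recursive Nijenhuis formula. You need the short induction (using $\overline T^{\,2}=0$) showing that $(\mathrm{ad}_{\hat T})^{k}\Delta=k!\,[-,\cdots,-]_{\overline T}^{k}$; this is precisely what the paper's equations \eqref{eq:RB-coboundary1}--\eqref{eq:RB-coboundary2} verify for $k=n-1$ by direct evaluation. Second, the paper only states $\partial_{\ad}f=(-1)^{p}[\mu,f]_{\nl}$ for the adjoint representation; your argument needs the (standard but unstated) extension to an arbitrary representation via the semidirect product, namely that the restriction of $(-1)^{p}[\pi_T,-]_{\nl}$ to cochains of bidegree $-1\mid \ast$ agrees with $\partial_{\rho_T}$.

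Finally, the case $m=0$ is a red herring. The map $l^{T}_{1}$ is only defined on $C^{*}(V,\g)=\oplus_{m\geq0}\Hom(\otimes^{m}(\wedge^{n-1}V)\wedge V,\g)$, and $\wedge^{n-1}\g$ does not sit inside this; the paper's own proof treats only $m\geq1$, and the theorem should be read accordingly. Your attempt to make sense of $l^{T}_{1}(\frkX)$ for $\frkX\in\wedge^{n-1}\g$ goes beyond what is being claimed.
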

\begin{proof}
For all $x_1,\cdots,x_n\in\g,u_1,\cdots,u_n\in V$, by a direct calculation, we have
{\footnotesize\begin{eqnarray*}
&&[\cdots,[[\mu+\rho,\underbrace{T]_{\nl},T]_{\nl},\cdots,T}_{n-1}]_{\nl}(x_1+u_1,\cdots,x_n+u_n)\\
 &=&(n-1)!\sum\limits^n_{i=1}[Tu_1,Tu_2,\cdots,x_i,\cdots,Tu_n]_{\g}+(n-1)!\sum\limits^n_{i=1}(-1)^{n-i}\rho(Tu_1,Tu_2,\cdots,\widehat{u_i},\cdots,Tu_n)(u_i)\\
 &&-(n-1)!\sum\limits^n_{i\neq j}(-1)^{n-j}T\rho(Tu_1,\cdots,x_i,\cdots,\widehat{Tu_j},\cdots,Tu_n)(u_j).
 \end{eqnarray*}}
Thus, we have
{\footnotesize\begin{eqnarray}
 \label{eq:RB-coboundary1}&&{[\cdots,[[\mu+\rho,\underbrace{T]_{\nl},T]_{\nl},\cdots,T}_{n-1}]_{\nl}}(u_1,\cdots,x_i,\cdots,u_n)\\
\nonumber &=&(n-1)!(-1)^{n-i}\rho_T(u_1,\cdots,\widehat{u_i},\cdots,u_n)x_i,\quad  1\leq i\leq n;\\
 \label{eq:RB-coboundary2} &&{[\cdots,[[\mu+\rho,\underbrace{T]_{\nl},T]_{\nl},\cdots,T}_{n-1}]_{\nl}}(u_1,\cdots,u_n)\\
 \nonumber&=&(n-1)![u_1,\cdots,u_n]_T.
\end{eqnarray}}
Moreover, for all $\frkU_i=u^1_i\wedge \cdots \wedge u_i^{n-1}\in \wedge^{n-1}V,~i=1,2\cdots,m$ and $u\in V$, we have
{\footnotesize\begin{eqnarray*}
&&\{\underbrace{T,\cdots,T}_{n-1},f\}(\frkU_1,\cdots,\frkU_m,u)\\
&=&[[\cdots[[\mu+\rho,\underbrace{T]_{\nl},T]_{\nl},\cdots,T}_{n-1}]_{\nl},f]_{\nl}(\frkU_1,\cdots,\frkU_m,u)\\
&=&\sum\limits_{i=1}^{n-1}[\cdots,[[\mu+\rho,\underbrace{T]_{\nl},T]_{\nl},\cdots,T}_{n-1}]_{\nl}(u_m^1\wedge\cdots\wedge u_m^{i-1}\wedge f(\frkU_1,\cdots,\frkU_{m-1},u_m^i)\wedge \cdots\wedge u_m^{n-1},u)\\
&&+\sum\limits_{\sigma\in S(1,m-1)}(-1)^{m-1}(-1)^{\sigma}[\cdots,[[\mu+\rho,\underbrace{T]_{\nl},T]_{\nl},\cdots,T}_{n-1}]_{\nl}(\frkU_{\sigma(1)},f(\frkU_{\sigma(2)},\cdots,\frkU_{\sigma(m)},u))\\
&&-\sum\limits_{j=1}^{n-1}\sum\limits_{k=1}^{m-1}(-1)^{m+k}\sum\limits_{\sigma\in S(k-1,1)}
(-1)^{\sigma}f(\frkU_{\sigma(1)},\cdots,\frkU_{\sigma(k-1)},u^1_{k+1}\wedge\cdots\wedge u_{k+1}^{j-1}\wedge\\
&&[\cdots,[[\mu+\rho,\underbrace{T]_{\nl},T]_{\nl},\cdots,T}_{n-1}]_{\nl}(\frkU_{\sigma(k)}, u_{k+1}^{j})\wedge u_{k+1}^{j+1}\wedge\cdots u_{k+1}^{n-1},\frkU_{k+2},\cdots,\frkU_m,u)\\
&&-\sum\limits_{\sigma\in S(m-1,1)}(-1)^{\sigma}f(\frkU_{\sigma(1)},\cdots,\frkU_{\sigma(m-1)},[\cdots,[[\mu+\rho,\underbrace{T]_{\nl},T]_{\nl},\cdots,T}_{n-1}]_{\nl}(\frkU_{\sigma(m)},u))\\
&=&\sum\limits_{i=1}^{n-1}(-1)^{n-i}(n-1)!
\rho_T(u_m^1,\cdots,u_m^{i-1},\hat{u_m^i},u_m^{i+1},\cdots,u_m^{n-1},u)f(\frkU_1,\cdots,\frkU_{m-1},u_m^i)\\
&&+\sum\limits_{i=1}^{m}(-1)^{m+i}[\cdots,[[\mu+\rho,\underbrace{T]_{\nl},T]_{\nl},\cdots,T}_{n-1}]_{\nl}(\frkU_i,f(\frkU_1,\cdots,\widehat{\frkU_i},\cdots,\frkU_{m},u))\\
&&-\sum\limits_{j=1}^{n-1}\sum\limits_{i=1}^{k}\sum\limits_{k=1}^{m-1}(-1)^{m+i}f(\frkU_1,\cdots,\widehat{\frkU_{i}},\cdots,\frkU_k,
u_{k+1}^1\wedge\cdots\wedge u_{k+1}^{j-1}\\
&&\wedge[\cdots,[[\mu+\rho,\underbrace{T]_{\nl},T]_{\nl},\cdots,T}_{n-1}]_{\nl}(\frkU_{i},u_{k+1}^j)\wedge u_{k+1}^{j+1}\wedge\cdots\wedge u_{k+1}^{n-1},\frkU_{k+2},
\cdots,\frkU_{m},u)\\
&&-\sum\limits_{i=1}^{m}(-1)^{m+i}f(\frkU_1,\cdots,\hat{\frkU_i},\cdots,\frkU_{m},[\cdots,[[\mu+\rho,\underbrace{T]_{\nl},T]_{\nl},\cdots,T}_{n-1}]_{\nl}(\frkU_{i},u))\\
&=&(-1)^{m-1}(n-1)!(\sum\limits_{i=1}^{n-1}(-1)^{n+m-i+1}\rho_T(u^1_m,\cdots,\hat{u^i_m},\cdots,u^{n-1}_m,u)f(\mathfrak{U}_1,\cdots,\mathfrak{U}_{m-1},
u^i_{m})\\
&&+\sum\limits_{i=1}^m(-1)^{i+1}\rho_T(\mathfrak U_{i})f(\mathfrak{U}_1,\cdots,\hat{\mathfrak{U}_i},\cdots,\mathfrak{U}_{m},u)+
\sum\limits_{i=1}^{m}(-1)^if(\frkU_1,\cdots,\widehat{\frkU_i},\cdots,\frkU_{m},[\frkU_{i},u]_T))\\
&&+(n-1)!(-1)^{m-1}\sum\limits_{1\leq i\leq k\leq m-1}(-1)^if(\frkU_1,\cdots,\widehat{\frkU_i},\cdots,\frkU_{k},
\sum\limits_{j=1}^{n-1}u_{k+1}^1\wedge\cdots\wedge u_{k+1}^{j-1}\\
&&\wedge[\frkU_i,u_{k+1}^j]_T
\wedge\cdots\wedge u_{k+1}^{n-1},\frkU_{k+2},
\cdots,\frkU_{m},u)\\
&=&(-1)^{m-1}(n-1)!df.
\end{eqnarray*}}
Thus, we deduce that $df=(-1)^{m-1}l^T_1f$.
\end{proof}

\section{Deformations of relative Rota-Baxter operators on $n$-\LP pairs}\label{sec:deformation}
 Let $V[[t]]$ denote the vector space of formal power series in $t$ with coefficients in $V$. If in addition, $(\g,[-,\cdots,-]_{\g})$ is an $n$-Lie algebra over $\K$, then there is an $n$-Lie algebra structure over the ring $\K[[t]]$ on $\g[[t]]$ given by
\begin{eqnarray}\label{eq:33defor}
[\sum\limits_{j_1=0}^{+\infty}a_1^{j_1}t^{j_1},\cdots,\sum\limits_{j_n=0}^{+\infty}a_n^{j_n}t^{j_n}]_{\g}
=\sum\limits_{s=0}^{+\infty}\sum\limits_{j_1+\cdots+j_n=s}[a_1^{j_1},\cdots,a_n^{j_n}]_{\g}t^s,
\end{eqnarray}
where $a_1^{j_1},\cdots,a_n^{j_n}\in \g$.

For any representation $(V;\rho)$ of $(\g,[-,\cdots,-]_{\g})$, there is a natural representation of the $n$-Lie algebra $\g[[t]]$ on the $\K[[t]]$-module $V[[t]]$ given by
\begin{eqnarray}\label{eq:34defor}
\rho(\sum\limits_{j_1=0}^{+\infty}a_1^{j_1}t^{j_1},\cdots,\sum\limits_{j_{n-1}=0}^{+\infty}a_{n-1}^{j_{n-1}}t^{j_{n-1}})(\sum\limits_{k=0}^{+\infty}v_kt^k)
=\sum\limits_{s=0}^{+\infty}\sum\limits_{j_1+\cdots+j_{n-1}=s}\rho(a_1^{j_1},\cdots,a_{n-1}^{j_{n-1}})(v_k)t^s
\end{eqnarray}
where $a_1^{j_1},\cdots,a_{n-1}^{j_{n-1}}\in \g, v_k\in V$.
Let $T$ be a relative Rota-Baxter operator on an $n$-\LP pair $(\frak g,[-,\cdots,-]_{\frak g};\rho)$. Consider a power series
\begin{equation}\label{eq:35defor}
  T_t=\sum_{i=0}^{+\infty}\frkT_it^i,\quad\frkT_i\in\Hom_{\K}(V,\g),
\end{equation}
that is, $T_t\in \Hom_{\K}(V,\g[[t]])$. Furthermore,  $T_t$ can be extended to be a $\K[[t]]$-module map from $V[[t]]$ to $\g[[t]]$.

\begin{defi}
If $T_t=\sum_{i=0}^{+\infty}\frkT_it^i$ with $\frkT_0=T$ satisfies
\begin{equation}\label{Eq:def5.136}
  [T_t(u_1),\cdots,T_t(u_n)]_{\g}=T_t(\sum\limits_{i=1}^m(-1)^{n-i}\rho(T_t(u_1),\cdots,\widehat{T_t(u_i)},\cdots,T_t(u_n))(u_i)),
\end{equation}
we say that $T_t$ is a {\bf formal deformation of relative Rota-Baxter operator }$T$.
\end{defi}

\emptycomment{\begin{pro}
If $T_t=\sum_{i=0}^{+\infty}\frkT_it^i$ is a formal deformation of a relative Rota-Baxter operator $T$ on an $n$-\LP pair $(\frak g,[-,\cdots,-]_{\frak g};\rho)$, then $[-,\cdots,-]_{T_t}$ defined by
\begin{equation*}
[u_1,\cdots,u_n]_{T_t}=\sum_{k=0}^{+\infty}\sum\limits_{j_1+\cdots+j_{n-1}=k}^m(\sum\limits_{i=1}^n
(-1)^{n-i}\rho(\frkT_{j_1}u_1,\cdots,\hat{u_i},\frkT_{j_i}u_{i+1},\cdots,\frkT_{j_{n-1}}u_n)(u_i))t^k,~\forall~u_1,\cdots,u_n\in V
\end{equation*}
is a formal deformation of the  $n$-Lie algebra $(V,[-,\cdots,-]_{T})$.
\end{pro}}
Applying Eqs. (\ref{eq:33defor})-(\ref{eq:35defor}) to expand Eq. (\ref{Eq:def5.136}) and comparing coefficients of $t^s$,  Eq. (\ref{Eq:def5.136}) is equivalent to the following equations
\begin{equation}\label{eqpro5.237}
  \sum\limits_{\mbox{\tiny$\begin{array}{c}
i_1+\cdots i_n=s\\
i_1,\cdots,i_n\geq 0\end{array}$}}[\frkT_{i_1}u_1,\cdots,\frkT_{i_n}u_n]_{\g}=\sum\limits_{\mbox{\tiny$\begin{array}{c}
i_1+\cdots i_n=s\\
i_1,\cdots,i_n\geq 0\end{array}$}}\frkT_{i_1}(\sum\limits_{k=1}^n(-1)^{n-k}\rho(\frkT_{i_2}u_1,\cdots,\frkT_{i_k}u_{k-1},\hat{u_k},\frkT_{i_{k+1}}u_{k+1},\cdots,\frkT_{i_n}u_n)(u_k)),
\end{equation}
for all $s\geq 0$ and $u_1,\cdots,u_n\in V$.

\begin{defi}
Let $T$ be a relative Rota-Baxter operator on an $n$-\LP pair $(\frak g,[-,\cdots,-]_{\frak g};\rho)$. An {\bf order $m$ deformation} of the relative Rota-Baxter operator $T$  is a sequence of linear maps $\frkT_i\in \Hom_{\K}(V,\g)$ for $0\leq i\leq m$ with $\frkT_0=T$ such that $\K[t]/(t^{m+1})$-module map $T_t=\sum_{i=0}^{m}\frkT_it^i$ from $V[t]/(t^{m+1})$ to the $n$-Lie algebra $\g[t]/(t^{m+1})$ satisfies
\begin{equation}\label{40}
  [T_t(u_1),\cdots,T_t(u_n)]_{\g}=T_t(\sum\limits_{i=1}^n(-1)^{n-i}\rho(T_t(u_1),\cdots,\widehat{T_t(u_i)},\cdots,T_t(u_n))(u_i)),
\end{equation}
where $u_1,u_2,\cdots u_n\in V$.
\end{defi}

We call an order $1$ deformation of a relative Rota-Baxter operator $T$ on an $n$-\LP pair $(\frak g,[-,\cdots,-]_{\frak g};\rho)$ an {\bf infinitesimal deformation} and denote it by $(\frak g,\frkT_1)$.

By direct calculations, $(\g,\frkT_1)$ is an infinitesimal deformation of a relative Rota-Baxter operator $T$ on an $n$-\LP pair $(\frak g,[-,\cdots,-]_{\frak g};\rho)$  if and only if for all $u_1,u_2,\cdots,u_n\in \g$,
\begin{eqnarray*}
\label{eq:1-cocycle}&&[\frkT_1u_1,Tu_2,\cdots,Tu_n]_\g+\cdots+[Tu_1,\cdots,\frkT_1u_n]_\g \\
\nonumber&=& T(\sum\limits_{i,j=1,i\neq j}^n(-1)^{n-j}\rho(Tu_1,\cdots,\widehat{Tu_j},\cdots,\frkT_1 u_i,Tu_{i+1},\cdots,Tu_n)(u_j))\\
\nonumber&&+\frkT_1(\sum\limits_{i=1}^n(-1)^{n-i}\rho(Tu_1,\cdots,\widehat{Tu_i},\cdots,Tu_n)(u_i)),
\end{eqnarray*}
which implies that $\frkT_1$ is a 1-cocycle for the relative Rota-Baxter operator $T$, i.e. $d\frkT_1=0$.

Two infinitesimal deformations  $(\g,\frkT_1)$ and $(\g,\frkT'_1)$  of a
relative Rota-Baxter operator $T$ are said to be {\bf
equivalent} if there exists $\frkX\in \wedge^{n-1}\g$, such that for $\phi_t=\Id_{\g}+t\ad_{\frkX}$ and $\varphi_t=\Id_{V}+t\rho(\frkX)$, the following conditions hold:
\begin{enumerate}
  \item[\rm(i)] $[\phi_t(x_1),\cdots,\phi_t(x_{n})]_{\g}=\phi_t[x_1,\cdots,x_n]_{\g}$ modulo $t^2$ for all $x_1,\cdots,x_n$;
  \item [\rm(ii)] $\varphi_t\rho(x_1,\cdots,x_{n-1})(u)=\rho(\phi_t(x_1),\cdots,\phi_t(x_{n-1}))(\varphi_t(u)))$ modulo $t^2$ for all $x_1,\cdots,x_{n-1}\in \g,u\in V$;
  \item[\rm(iii)] $T_t\circ \varphi_t=\phi_t\circ{T'_t}$  modulo $t^2$, where $T_t=T+t \frkT_1$ and $T'_t=T+t \frkT'_1$.
\end{enumerate}

By Eq. \eqref{1} in the definition of $n$-Lie algebra and Eq. \eqref{eq:rep1} in the definition of representation of an $n$-Lie algebra, the conditions (i) and (ii) follow.

By the relation $T_t\circ \varphi_t=\phi_t\circ{T'_t}$, we have
$${\frkT'_1}(v)=\frkT_1(v)+T\rho(\frkX)(v)-[\frkX,Tv]_\g=\frkT_1(v)+(d\frkX)(v), ~v\in V,$$
which implies that ${\frkT'_1}-\frkT_1=d\frkX$. Thus we have
\begin{thm}
 There is a one-to-one correspondence between the space of equivalence classes of infinitesimal deformations of a relative Rota-Baxter operator $T$ and the first cohomology group $\huaH^1(V,\g)$.
\end{thm}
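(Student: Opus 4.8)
The plan is to exhibit the assignment $(\g,\frkT_1)\mapsto\frkT_1$ as a bijection between infinitesimal deformations and $1$-cocycles, and then to check that it carries the equivalence relation on deformations exactly onto the relation ``cohomologous'' on cocycles, so that it descends to the desired bijection on classes. Almost all the substantive work has already been done in the paragraphs immediately preceding the statement, so the proof is largely an assembly.

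First I would record what the preceding computation already gives: $(\g,\frkT_1)$ is an infinitesimal deformation of $T$ if and only if $d\frkT_1=0$, that is, $\frkT_1\in\huaZ^1(V;\g)$. Hence the map sending an infinitesimal deformation to its leading perturbation $\frkT_1$ is a well-defined bijection from the set of infinitesimal deformations onto the space of $1$-cocycles $\huaZ^1(V;\g)$.

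Next I would analyze the equivalence relation. The forward implication is already in hand: if $(\g,\frkT_1)$ and $(\g,\frkT'_1)$ are equivalent via some $\frkX\in\wedge^{n-1}\g=C^0_T(V;\g)$, then condition (iii) unwinds to $\frkT'_1(v)-\frkT_1(v)=T\rho(\frkX)(v)-[\frkX,Tv]_\g=(d\frkX)(v)$ for all $v\in V$, so the two cocycles differ by the coboundary $d\frkX\in\huaB^1(V;\g)$. For the converse I would start from two $1$-cocycles with $\frkT'_1-\frkT_1=d\frkX$ and set $\phi_t=\Id_{\g}+t\ad_{\frkX}$ and $\varphi_t=\Id_{V}+t\rho(\frkX)$; conditions (i) and (ii) then hold modulo $t^2$ precisely because $\ad_{\frkX}$ is a derivation of $(\g,[-,\cdots,-]_\g)$ by the defining identity \eqref{1} and because $\rho$ satisfies \eqref{eq:rep1}, while condition (iii) is equivalent to $\frkT'_1-\frkT_1=d\frkX$ by the same unwinding as above. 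Thus equivalence of infinitesimal deformations corresponds exactly to the two cocycles being cohomologous.

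Combining the two steps, the bijection $(\g,\frkT_1)\mapsto\frkT_1$ identifies the equivalence class of a deformation with the coset $\frkT_1+\huaB^1(V;\g)$, hence descends to a bijection between the set of equivalence classes of infinitesimal deformations and $\huaH^1(V;\g)=\huaZ^1(V;\g)/\huaB^1(V;\g)$. I expect the only non-formal point to be the converse in the third paragraph---verifying that an arbitrary $\frkX$ really produces an \emph{equivalence}, i.e. that (i) and (ii) are automatic---but this is exactly the content of the $n$-Lie algebra and representation axioms recalled at the start of the paper, so no genuine obstacle remains.
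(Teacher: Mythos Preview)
Your proposal is correct and follows the same line as the paper, which in fact offers no formal proof beyond the computations preceding the theorem (``Thus we have''). You have essentially spelled out what the paper leaves implicit, and you are slightly more careful in that you explicitly verify the converse direction (that cohomologous cocycles yield equivalent deformations), which the paper omits.
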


\begin{defi}
Let $T_t=\sum_{i=0}^{m}\frkT_it^i$ be an order $m$ deformation of a relative Rota-Baxter operator $T$ on an $n$-\LP pair $(\frak g,[-,\cdots,-]_{\frak g};\rho)$. $T_t$ is said to be {\bf extendable} if there exists a $1$-cochain $\frkT_{m+1}\in \Hom(V,\g)$ such that $\tilde{T_t}=T_t+\frkT_{m+1}t^{m+1}$ is an order $m+1$ deformation of the relative Rota-Baxter operator $T$.
\end{defi}

Let $T_t=\sum_{i=0}^{m}\frkT_it^i$ be an order $m$ deformation of a relative Rota-Baxter operator $T$ on an  $n$-\LP pair $(\frak g,[-,\cdots,-]_{\frak g};\rho)$. Define $\Theta_m\in C_{T}^2(V;\g)$ by
\begin{eqnarray}
\label{eq37}\Theta_m(u_1,u_2,\cdots,u_n)&=&\sum\limits_{\mbox{\tiny$\begin{array}{c}
i_1+\cdots i_n=m+1\\
0\leq i_1,\cdots,i_n\leq m\end{array}$}}\Big([\frkT_{i_1}u_1,\cdots,\frkT_{i_n}u_n]_{\g}\\
\nonumber&&-\frkT_{i_1}(\sum\limits_{k=1}^n(-1)^{n-k}\rho(\frkT_{i_2}u_1,
\cdots,\frkT_{i_k}u_{k-1},\frkT_{i_{k+1}}u_{k+1},\cdots,\frkT_{i_n}u_n)(u_k))\Big),
\end{eqnarray}
where $u_1,\cdots,u_n\in V$.

\begin{pro}
The $2$-cochain $\Theta_m$ is a $2$-cocycle, that is, $d\Theta_m=0$.
\end{pro}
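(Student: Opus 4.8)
The plan is to avoid the direct (and very lengthy) verification of $\partial_T\Theta_m=0$ by instead recognizing $\Theta_m$ as a component of the curvature of $T_t$ inside the Lie $n$-algebra $(C^*(V,\frak g),\{-,\cdots,-\})$ of Theorem~\ref{thm:Maurer-Cartan element}, and then deducing the cocycle condition from the generalized Jacobi identity \eqref{7}. Extend $T_t=\sum_{i=0}^m\frkT_it^i$ to a genuine formal power series by setting $\frkT_i=0$ for $i>m$, let the brackets act $\K[[t]]$-linearly, and put $F(t):=\{T_t,\cdots,T_t\}=\sum_{s\geq0}F_st^s$, where $F_s=\sum_{i_1+\cdots+i_n=s}\{\frkT_{i_1},\cdots,\frkT_{i_n}\}$. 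By the computation carried out in the proof of Theorem~\ref{thm:Maurer-Cartan element}, the value of $\frac{1}{n!}\{T_t,\cdots,T_t\}$ on $(u_1,\cdots,u_n)$ equals $[T_tu_1,\cdots,T_tu_n]_{\frak g}-T_t\big(\sum_i(-1)^{n-i}\rho(T_tu_1,\cdots,\widehat{T_tu_i},\cdots,T_tu_n)(u_i)\big)$; comparing the coefficient of $t^{m+1}$ with \eqref{eq37} gives the identification $\Theta_m=\tfrac{1}{n!}F_{m+1}$.

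First I would record that, since $T_t$ is an order $m$ deformation, the defining equations \eqref{eqpro5.237} for $0\le s\le m$ say precisely that $F_s=0$ for $0\le s\le m$ (in particular $F_0=\{T,\cdots,T\}=0$ because $T$ is a relative Rota-Baxter operator). Next I would derive a Bianchi-type identity: applying the generalized Jacobi identity \eqref{7} to the $2n-1$ arguments all equal to the degree-$0$ element $T_t$ makes every Koszul sign trivial, so all $\binom{2n-1}{n}$ terms coincide and one obtains $\{F(t),\underbrace{T_t,\cdots,T_t}_{n-1}\}=0$ in $C^*(V,\frak g)[[t]]$.

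Then I would extract the coefficient of $t^{m+1}$ from this identity, namely $\sum_{s+j_1+\cdots+j_{n-1}=m+1}\{F_s,\frkT_{j_1},\cdots,\frkT_{j_{n-1}}\}=0$. Because $F_s=0$ for $s\le m$, the only surviving summand is the one with $s=m+1$ and $j_1=\cdots=j_{n-1}=0$, so $\{F_{m+1},\underbrace{T,\cdots,T}_{n-1}\}=0$, equivalently $\{\underbrace{T,\cdots,T}_{n-1},F_{m+1}\}=0$ by the graded symmetry \eqref{6}. Recalling that $l^T_1(P)=\tfrac{1}{(n-1)!}\{\underbrace{T,\cdots,T}_{n-1},P\}$ from \eqref{eq:l^T} together with $\Theta_m=\tfrac{1}{n!}F_{m+1}$, this yields $l^T_1(\Theta_m)=0$, and Theorem~\ref{th:relationdl} gives $d\Theta_m=(-1)^{2-1}l^T_1\Theta_m=0$, as desired.

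The main obstacle I anticipate is bookkeeping rather than any conceptual difficulty: matching the explicit sum \eqref{eq37} against the coefficient of $t^{m+1}$ in $\tfrac{1}{n!}\{T_t,\cdots,T_t\}$ (so that $\Theta_m=\tfrac1{n!}F_{m+1}$ holds on the nose, including the fact that no $\frkT_{m+1}$ appears), and checking that the generalized Jacobi identity collapses to $\{F(t),T_t,\cdots,T_t\}=0$ with the correct (trivial) signs coming from $\|T_t\|=0$. Should one prefer to avoid the Lie $n$-algebra machinery, the same conclusion can in principle be reached by expanding $\partial_T\Theta_m$ with the explicit formula for $\partial_T$ and cancelling terms using $F_s=0$ for $s\le m$, but that route is considerably more tedious.
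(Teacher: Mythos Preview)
Your argument is correct and uses the same underlying ingredients as the paper (the identification of $\Theta_m$ with a component of the Lie $n$-bracket, Theorem~\ref{th:relationdl}, and the generalized Jacobi identity~\eqref{7}), but your organization is noticeably cleaner. The paper also begins by writing $\Theta_m=\tfrac{1}{n!}\sum_{i_1+\cdots+i_n=m+1,\;0\le i_j\le m}\{\frkT_{i_1},\cdots,\frkT_{i_n}\}$, then computes $d\Theta_m=-\tfrac{1}{(n-1)!}\{\underbrace{T,\cdots,T}_{n-1},\Theta_m\}$ by repeatedly expanding via~\eqref{eq:general-JI} and substituting the order-$m$ relations~\eqref{eqpro5.237} in the form $-\tfrac{1}{(n-1)!}\{\underbrace{T,\cdots,T}_{n-1},\frkT_s\}=\sum_j\tfrac{C_n^j}{n!}\sum\{\underbrace{T,\cdots,T}_{j},\frkT_{i_1},\cdots,\frkT_{i_{n-j}}\}$, eventually cancelling everything after several steps. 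Your formal-power-series packaging of the same data as the single Bianchi-type identity $\{F(t),\underbrace{T_t,\cdots,T_t}_{n-1}\}=0$ (obtained by plugging $2n-1$ copies of the degree-$0$ element $T_t$ into~\eqref{7}) and then reading off the $t^{m+1}$-coefficient collapses the paper's multi-step cancellation into a one-line extraction: since $F_s=0$ for $s\le m$, only $\{F_{m+1},T,\cdots,T\}$ survives. The bookkeeping concerns you flag are benign: setting $\frkT_i=0$ for $i>m$ makes the constraint $i_j\le m$ in~\eqref{eq37} automatic, so $\Theta_m=\tfrac{1}{n!}F_{m+1}$ on the nose, and the Koszul signs in~\eqref{7} are indeed trivial for degree-$0$ inputs.
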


\begin{proof}
Using Eq. \eqref{eq16}, we have
\begin{eqnarray}\label{eq:2-cochain1}
&&\{\frkT_{i_1},\cdots,\frkT_{i_n}\}(u_1,\cdots,u_n)=\sum\limits_{\sigma\in S_n}[\frkT_{i_{\sigma(1)}}(u_1),\cdots,\frkT_{i_{\sigma(n)}}(u_n)]_\g\\
\nonumber&& -\sum\limits_{\sigma\in S_n}\frkT_{i_{\sigma(1)}}\sum\limits_{k=1}^n(-1)^{n-k}\rho\big(\frkT_{i_{\sigma(2)}}(u_1),\cdots,\frkT_{i_{\sigma(k)}}(u_{k-1}),\frkT_{i_{\sigma(k+1)}}(u_{k+1}),\cdots,\frkT_{i_{\sigma(n)}}(u_n)\big) (u_k).
 \end{eqnarray}
By Eq. \eqref{eq37} and Eq. \eqref{eq:2-cochain1}, we deduce that
\begin{equation}\label{eq:2-cochain2}
\Theta_m=1/n!\sum\limits_{\mbox{\tiny$\begin{array}{c}
i_1+\cdots i_n=m+1\\
0\leq i_1,\cdots,i_n\leq m\end{array}$}}\{\frkT_{i_1},\cdots,\frkT_{i_n}\}=\sum\limits_{j=0}^{n-2}\frac{C_n^j}{n!}\sum\limits_{\mbox{\tiny$\begin{array}{c}
i_1+\cdots i_{n-j}=m+1\\
1\leq i_1,\cdots,i_{n-j}\leq m\end{array}$}}\{\underbrace{T,\cdots,T}_{j},\frkT_{i_1},\cdots,\frkT_{i_{n-j}}\}.
\end{equation}
Since $T_t$ is an order $m$ deformation of the relative Rota-Baxter operator $T$, by \eqref{eqpro5.237} and \eqref{eq:2-cochain1}, we have
\begin{eqnarray}
-\frac{1}{(n-1)!}\{\underbrace{T,\cdots,T}_{n-1},\frkT_s\}
\label{eq:(42)s+1}&=&\sum\limits_{j=0}^{n-2}\frac{C_n^j}{n!}\sum\limits_{\mbox{\tiny$\begin{array}{c}
i_1+\cdots i_{n-j}=s\\
1\leq i_1,\cdots,i_{n-j}\leq s-1\end{array}$}}\{\underbrace{T,\cdots,T}_{j},\frkT_{i_1},\cdots,\frkT_{i_{n-j}}\}.
\end{eqnarray}

By Theorem \ref{th:relationdl} and Eq. (\ref{eq:l^T}), we have
{\footnotesize\begin{eqnarray*}
d \Theta_m&=&-\frac{1}{(n-1)!}\{\underbrace{T,\cdots,T}_{n-1},\Theta_m\}\\
&\stackrel{\eqref{eq:2-cochain2}}=&-\frac{1}{(n-1)!}\sum\limits_{j=0}^{n-2}\frac{C_n^j}{n!}\sum\limits_{\mbox{\tiny$\begin{array}{c}
i_1+\cdots i_{n-j}=m+1\\
1\leq i_1,\cdots,i_{n-j}\leq m\end{array}$}}\{\{\frkT_{i_1},\cdots,\frkT_{i_{n-j}},\underbrace{T,\cdots,T}_{j}\},T,\cdots,T\}\\
&\stackrel{\eqref{eq:general-JI}}=& \frac{1}{(n-1)!}\sum\limits_{j=0}^{n-2}\frac{C_n^j}{n!}\sum\limits_{\mbox{\tiny$\begin{array}{c}
i_1+\cdots i_{n-j}=m+1\\
1\leq i_1,\cdots,i_{n-j}\leq m\end{array}$}}\sum\limits_{k=j+1}^{n-1}\frac{C_{n-1+j}^k\cdot C_{n-j}^{n-k}}{C_{n-1+j}^{j}}\{\{\frkT_{i_1},\cdots,\frkT_{i_{n-k}},\underbrace{T,\cdots,T}_{k}\},\frkT_{i_{n-k+1}},\cdots,\frkT_{i_{n-j}},T,\cdots,T\}\\
&=&\frac{1}{(n-1)!}\sum\limits_{0\leq p<k\leq n-2}^{n-2}\frac{C_n^{p}}{n!}\sum\limits_{\mbox{\tiny$\begin{array}{c}
i_1+\cdots i_{n-p}=m+1\\
1\leq i_1,\cdots,i_{n-p}\leq m\end{array}$}}\frac{C_{n-1+p}^k\cdot C_{n-p}^{n-k}}{C_{n-1+p}^{p}}\{\{\frkT_{i_1},\cdots,\frkT_{i_{n-k}},\underbrace{T,\cdots,T}_{k}\},\frkT_{i_{n-k+1}},\cdots,\frkT_{i_{n-p}},T,\cdots,T\}\\
&&+\frac{1}{(n-1)!}\sum\limits_{j=0}^{n-2}\frac{C_n^j}{n!}\sum\limits_{\mbox{\tiny$\begin{array}{c}
i_1+\cdots i_{n-j}=m+1\\
1\leq i_1,\cdots,i_{n-j}\leq m\end{array}$}}C_{n-j}^1\{\{\frkT_{i_1},T,\cdots,T\},\frkT_{i_2},\cdots,\frkT_{i_{n-j}},T,\cdots,T\}\\
&\stackrel{\eqref{eq:(42)s+1}}=&\frac{1}{(n-1)!}\sum\limits_{0\leq {p}<k\leq n-2}^{n-2}\frac{C_n^{p}}{n!}\sum\limits_{\mbox{\tiny$\begin{array}{c}
i_1+\cdots i_{n-p}=m+1\\
1\leq i_1,\cdots,i_{n-p}\leq m\end{array}$}}\frac{C_{n-1+p}^k\cdot C_{n-p}^{n-k}}{C_{n-1+p}^{p}}\{\{\frkT_{i_1},\cdots,\frkT_{i_{n-k}},\underbrace{T,\cdots,T}_{k}\},\frkT_{i_{n-k+1}},\cdots,\frkT_{i_{n-p}},T,\cdots,T\}\\
&&-\sum\limits_{k=0}^{n-2}\frac{C_n^{k}}{n!}\sum\limits_{j=0}^{n-2}\sum\limits_{\mbox{\tiny$\begin{array}{c}
i^{'}_1+\cdots i^{'}_{n-k}+i_2+\cdots+i_{n-j}=m+1\\
1\leq i^{'}_1,\cdots,i^{'}_{n-k},i_2,\cdots,i_{n-j}\leq m\end{array}$}}\frac{C_n^{j}}{n!}C_{n-j}^1\{\{\frkT_{i_1^{'}},\cdots,\frkT_{i_{n-k}^{'}},\underbrace{T,\cdots,T}_{k}\},\frkT_{i_2},\cdots,
\frkT_{i_{n-j}},T,\cdots,T\}\\
&\stackrel{\eqref{eq:general-JI}}=&-\sum\limits_{\mbox{\tiny$\begin{array}{c}
0< k+j\leq n-2\\
0\leq k\leq n-2\end{array}$}}\frac{C_n^{j}}{n!}\frac{C_n^{k}}{n!}C_{n-j}^1\sum\limits_{\mbox{\tiny$\begin{array}{c}
i^{'}_1+\cdots i^{'}_{n-k}+i_2+\cdots+i_{n-j}=m+1\\
1\leq i^{'}_1,\cdots,i^{'}_{n-k},i_2,\cdots,i_{n-j}\leq m\end{array}$}}\{\{\frkT_{i_1^{'}},\cdots,\frkT_{i_{n-k}^{'}},\underbrace{T,\cdots,T}_{k}\},\frkT_{i_2},\cdots,\frkT_{i_{n-j}},T,\cdots,T\}\\
&\stackrel{\eqref{eq:general-JI}}=&0,
\end{eqnarray*}}
which implies that the $2$-cochain {$\Theta_m$} is a cocycle.
\end{proof}

Moreover, we have
\begin{thm}
Let $T_t=\sum_{i=0}^m\frkT_it^i$ be an order $m$ deformation of a relative Rota-Baxter operator $T$ on an $n$-\LP pair $(\frak g,[-,\cdots,-]_{\frak g};\rho)$. Then $T_t$ is extendable if and only if the cohomology group ${[\Theta_m]}$ in $\huaH^2(V,\g)$ is trivial.
\end{thm}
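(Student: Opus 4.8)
The plan is to reduce extendability to a single cohomological equation at order $m+1$. First I would observe that, by definition, $\tilde{T_t}=T_t+\frkT_{m+1}t^{m+1}$ is an order $m+1$ deformation precisely when Eq. \eqref{eqpro5.237} holds for every $0\leq s\leq m+1$. For $0\leq s\leq m$ each such equation involves only the maps $\frkT_0,\dots,\frkT_m$ (since $i_1+\cdots+i_n=s\leq m$ forces every index to be at most $m$), so it coincides with the corresponding order-$m$ deformation equation already satisfied by $T_t$; the addition of the term $\frkT_{m+1}t^{m+1}$ changes no coefficient of $t^s$ with $s\leq m$. Hence the only genuinely new constraint is the vanishing of the coefficient of $t^{m+1}$.

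Next I would compute that coefficient inside the Lie $n$-algebra $(C^*(V,\frak g),\{-,\cdots,-\})$. Using Eq. \eqref{eq:2-cochain2}, the order $m+1$ condition is the vanishing of $\tfrac{1}{n!}\sum_{i_1+\cdots+i_n=m+1}\{\frkT_{i_1},\cdots,\frkT_{i_n}\}$, now with each index ranging in $\{0,\dots,m+1\}$. I would split this sum into the part with all indices $\leq m$, which is exactly $\Theta_m$ by \eqref{eq:2-cochain2} and \eqref{eq37}, and the part in which some index equals $m+1$. Since $i_1+\cdots+i_n=m+1$ with nonnegative indices and one index equal to $m+1$ forces all others to vanish (i.e. to equal $\frkT_0=T$), and since both $T$ and $\frkT_{m+1}$ lie in the degree-zero piece $C^0(V;\frak g)$, the graded symmetry of the $n$-bracket makes the $n$ such terms equal, so they collapse to $\tfrac{1}{(n-1)!}\{\frkT_{m+1},\underbrace{T,\cdots,T}_{n-1}\}=l^T_1\frkT_{m+1}$ by Eq. \eqref{eq:l^T}. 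Therefore the coefficient of $t^{m+1}$ is $\Theta_m+l^T_1\frkT_{m+1}$.

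Finally I would invoke Theorem \ref{th:relationdl} with $f=\frkT_{m+1}\in C^1_T(V;\frak g)$, which gives $d\frkT_{m+1}=(-1)^{1-1}l^T_1\frkT_{m+1}=l^T_1\frkT_{m+1}$. Thus the order $m+1$ equation is simply $\Theta_m+d\frkT_{m+1}=0$. Consequently $T_t$ is extendable if and only if there exists a $1$-cochain $\frkT_{m+1}\in\Hom(V,\frak g)$ with $d\frkT_{m+1}=-\Theta_m$, that is, if and only if $\Theta_m\in\huaB^2(V;\frak g)$, which by the previous proposition ($\Theta_m$ is a cocycle) means exactly that $[\Theta_m]=0$ in $\huaH^2(V,\frak g)$. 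The conceptual content is light once that cocycle identity and Theorem \ref{th:relationdl} are available; the only delicate point, and the step I expect to require the most care, is the bookkeeping in isolating the $t^{m+1}$ coefficient, namely verifying that the single-$\frkT_{m+1}$ cross terms reassemble into $l^T_1\frkT_{m+1}$ with the correct combinatorial factor $\tfrac{1}{(n-1)!}$ and the correct sign, which hinges on the graded symmetry of the Lie $n$-algebra bracket.
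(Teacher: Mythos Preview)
Your proposal is correct and follows essentially the same route as the paper: both reduce extendability to the single equation at level $s=m+1$, rewrite it via the Lie $n$-algebra bracket as $\Theta_m+\tfrac{1}{(n-1)!}\{T,\dots,T,\frkT_{m+1}\}=0$, and then invoke Theorem \ref{th:relationdl} to identify this with $\Theta_m=-d\frkT_{m+1}$. Your write-up is in fact a bit more explicit than the paper's about why the equations for $s\leq m$ are unaffected and about the combinatorics collapsing the cross terms into $l^T_1\frkT_{m+1}$; the only minor imprecision is that the passage from Eq. \eqref{eqpro5.237} to the bracket form really uses Eq. \eqref{eq:2-cochain1} (equivalently Theorem \ref{thm:Maurer-Cartan element}) rather than Eq. \eqref{eq:2-cochain2}.
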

\begin{proof}
Assume that an order $m$ deformation $T_t$ of the relative Rota-Baxter operator $T$ can be extended to an order $m+1$ deformation. By Eq. \eqref{eqpro5.237} with $s=m+1$, Eq. (\ref{eq:(42)s+1}) holds. Thus, we have $\Theta_m=-d\frkT_{m+1}$, which implies that the cohomology group ${[\Theta_m]}$ is trivial.

Conversely, if the cohomology group  ${[\Theta_m]}$ is trivial, then there exists a $1$-cochain $\frkT_{m+1} \in \Hom_{\K}(V,\g)$ such that ${\Theta_m}=d\frkT_{m+1}$.
Set $\tilde{T_t}=T_t+\frkT_{m+1}t^{m+1}$. Then $\tilde{T_t}$ satisfies Eq. (\ref{eq:(42)s+1}) for $0\leq s\leq m+1$. Thus $\tilde{T_t}$ is an order $m+1$ deformation, which implies that $T_t$ is extendable.
\end{proof}

\section{From cohomology groups of relative Rota-Baxter operators on $n$-Lie algebras to those on $(n+1)$-Lie algebras }\label{sec:construction}

Motivated by the construction of $3$-Lie algebras from the general linear Lie algebras with trace
forms in \cite{trace form}, the authors in \cite{BWLZ13}  provide a construction of $(n+1)$-Lie algebras from $n$-Lie algebras and certain linear functions.
\begin{lem}{{\rm(\cite{BWLZ13})}\label{n-n+1}
Let $(\g,[-,\cdots,-]_\g)$ be an $n$-Lie algebra and $\g^*$ the dual space of $\g$. Suppose $f\in \g^*$ satisfies $f([x_1,\cdots,x_n]_\g)=0$ for all $x_i\in \g$. Then $(\g,\{-,\cdots,-\})$ is an $(n+1)$-Lie algebra, where the bracket is given by
\begin{equation}
\{x_1,\cdots,x_{n+1}\}=\sum\limits_{i=1}^{n+1}(-1)^{i-1}f(x_i)[x_1,\cdots,\hat{x_i},\cdots,x_{n+1}]_\g,\quad\forall~x_i\in\g.
\end{equation}
The $(n+1)$-Lie algebra constructed as above is denoted by $\g_f$.
}\end{lem}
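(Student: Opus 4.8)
The plan is to verify the two defining axioms of an $(n+1)$-Lie algebra for the bracket $\{-,\cdots,-\}$: its skew-symmetry, and the fundamental (Filippov) identity, which is the $(n+1)$-ary analogue of \eqref{1},
$$\{x_1,\cdots,x_n,\{y_1,\cdots,y_{n+1}\}\}=\sum_{i=1}^{n+1}\{y_1,\cdots,y_{i-1},\{x_1,\cdots,x_n,y_i\},y_{i+1},\cdots,y_{n+1}\}.$$

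For skew-symmetry I would avoid a transposition-by-transposition check and instead observe that the defining formula is the composite $\{x_1,\cdots,x_{n+1}\}=[\,\iota_f(x_1\wedge\cdots\wedge x_{n+1})\,]_\g$, where $\iota_f:\wedge^{n+1}\g\to\wedge^n\g$ is the interior product by $f\in\g^*$, namely $\iota_f(x_1\wedge\cdots\wedge x_{n+1})=\sum_{i=1}^{n+1}(-1)^{i-1}f(x_i)\,x_1\wedge\cdots\wedge\hat{x_i}\wedge\cdots\wedge x_{n+1}$, and $[-,\cdots,-]_\g$ is viewed as a linear map $\wedge^n\g\to\g$. Since $\iota_f$ is a well-defined map on the exterior power, this composite is automatically skew-symmetric in $x_1,\cdots,x_{n+1}$.

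The heart of the proof is the fundamental identity, which I would establish by expanding both sides with the definition and reducing everything to nested $n$-brackets weighted by products $f(\cdot)f(\cdot)$. The hypothesis $f([x_1,\cdots,x_n]_\g)=0$ enters twice and is exactly what makes the computation close up. On the left, writing $w=\{y_1,\cdots,y_{n+1}\}$, the term $(-1)^n f(w)[x_1,\cdots,x_n]_\g$ appearing in $\{x_1,\cdots,x_n,w\}$ vanishes because $w$ is a linear combination of $n$-brackets on which $f$ vanishes; hence
$$\{x_1,\cdots,x_n,w\}=\sum_{k=1}^n\sum_{i=1}^{n+1}(-1)^{k+i}f(x_k)f(y_i)\,[x_1,\cdots,\widehat{x_k},\cdots,x_n,[y_1,\cdots,\widehat{y_i},\cdots,y_{n+1}]_\g]_\g.$$
I would then rewrite each inner double bracket using the Filippov identity \eqref{1} for $[-,\cdots,-]_\g$, distributing the $n-1$ remaining $x$'s over the $n$ arguments $\{y_j:j\neq i\}$. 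On the right, the inner bracket $z_i:=\{x_1,\cdots,x_n,y_i\}$ is again a combination of $n$-brackets, so $f(z_i)=0$ and the corresponding term of $\{y_1,\cdots,z_i,\cdots,y_{n+1}\}$ drops out; substituting the surviving part of $z_i$ yields two families of terms, one carrying a factor $f(x_k)f(y_l)$ and one carrying $f(y_l)f(y_i)$.

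The main obstacle, and the step demanding the most care, is the sign bookkeeping in matching these families. The $f(x_k)f(y_l)$-terms on the right are precisely those produced from the left after the Filippov expansion, once the contracted $y$-indices are aligned. The $f(y_l)f(y_i)$-terms, which contain the pure bracket $[x_1,\cdots,x_n]_\g$ and have no counterpart on the left, must cancel in pairs: for each unordered pair $\{i,l\}$ the contributions indexed by $(i,l)$ and $(l,i)$ differ only by moving $[x_1,\cdots,x_n]_\g$ between positions $i$ and $l$, and the skew-symmetry of the $n$-bracket together with the signs $(-1)^{i-1},(-1)^{l-1}$ forces their sum to vanish. Checking that all signs conspire correctly is routine but lengthy; once it is done, the two sides coincide term by term, which proves the Filippov identity and hence that $\g_f$ is an $(n+1)$-Lie algebra.
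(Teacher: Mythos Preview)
The paper does not prove this lemma; it is stated with a citation to \cite{BWLZ13} and no argument is given, so there is no in-paper proof to compare against. Your outline is a correct direct verification and is essentially the standard one: the interior-product description gives skew-symmetry for free; the hypothesis $f([-,\cdots,-]_\g)=0$ kills the $f(w)$-term on the left and the $f(z_i)$-terms on the right; the surviving $f(x_k)f(y_l)$-family on the right regroups, after swapping the $i$- and $l$-sums and applying the $n$-ary Filippov identity once, into exactly the left-hand side; and the residual $f(y_l)f(y_i)$-family cancels in pairs by skew-symmetry of the $n$-bracket as you say. The only delicate point is the sign count in that last cancellation (one must track the $|l-i|-1$ transpositions needed to compare the two brackets, after which the two contributions for the unordered pair $\{i,l\}$ carry signs $(-1)^{n-i}$ and $(-1)^{n+i-1}$ and sum to zero), but this goes through.
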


\begin{pro}\label{pro:new LieRep}
Let $(\g,[-,\cdots,-]_{\g};\rho)$ be an $n$-\LP pair. Define $\varrho:\wedge^{n}\g\rightarrow\gl(V)$ by
\begin{equation}
\varrho(x_1,\cdots,x_n)=\sum\limits_{i=1}^{n}(-1)^{i-1}f(x_i)\rho(x_1,\cdots,\hat{x_i},\cdots,x_n).
\end{equation}
Then $(\g_f;\varrho)$ is an $(n+1)$-\LP pair.
\end{pro}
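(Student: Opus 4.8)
The plan is to reduce the statement to the semidirect product construction together with Lemma \ref{n-n+1}, rather than verifying the two representation axioms \eqref{eq:rep1}--\eqref{eq:rep2} for $\varrho$ by hand. First I would extend $f\in\g^*$ to a linear function $\tilde f$ on $\g\oplus V$ by setting $\tilde f(x+v)=f(x)$ for $x\in\g$, $v\in V$. Recall from \eqref{15} that the bracket of the semidirect product $n$-Lie algebra $\g\ltimes_{\rho}V$ has $\g$-component $[x_1,\cdots,x_n]_\g$ and $V$-component lying entirely in $V$. Since $\tilde f$ annihilates $V$ and $f([x_1,\cdots,x_n]_\g)=0$ by hypothesis, it follows that $\tilde f\big([x_1+u_1,\cdots,x_n+u_n]_\rho\big)=f([x_1,\cdots,x_n]_\g)=0$. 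Hence $\tilde f$ satisfies the vanishing hypothesis of Lemma \ref{n-n+1} applied to the $n$-Lie algebra $\g\ltimes_{\rho}V$, and Lemma \ref{n-n+1} produces an $(n+1)$-Lie algebra $(\g\ltimes_{\rho}V)_{\tilde f}$.

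The key step is to identify this $(n+1)$-Lie algebra with a semidirect product. I would compute the bracket $\{x_1+u_1,\cdots,x_{n+1}+u_{n+1}\}_{\tilde f}$ directly from the formula in Lemma \ref{n-n+1}: using $\tilde f(x_i+u_i)=f(x_i)$ and expanding each inner bracket $[\cdots]_\rho$ via \eqref{15}, the $\g$-component collects to $\sum_{i}(-1)^{i-1}f(x_i)[x_1,\cdots,\hat{x_i},\cdots,x_{n+1}]_\g$, which is exactly the bracket of $\g_f$. The $V$-component is linear in the $u_j$ and, after reindexing, should reorganize precisely into $\sum_{k=1}^{n+1}(-1)^{n+1-k}\varrho(x_1,\cdots,\hat{x_k},\cdots,x_{n+1})(u_k)$ with $\varrho$ the map in the statement. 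In other words, $(\g\ltimes_{\rho}V)_{\tilde f}=\g_f\ltimes_{\varrho}V$ as $(n+1)$-Lie algebras.

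Finally I would invoke the Maurer-Cartan characterization of $(n+1)$-\LP pairs (the semidirect product proposition of Section \ref{sec:MC-characterizations LP pairs}, applied with $n$ replaced by $n+1$): since $\g_f$ is an $(n+1)$-Lie algebra by Lemma \ref{n-n+1}, and since the bracket on $\g_f\oplus V$ obtained above is genuinely of semidirect product type $\mu_{\g_f}+\varrho$ (no $V$-input ever contributes to the $\g$-component, and the $V$-component is exactly the $\varrho$-action), the fact that $\g_f\ltimes_{\varrho}V$ is an $(n+1)$-Lie algebra forces $\varrho$ to be a representation of $\g_f$ on $V$. Therefore $(\g_f;\varrho)$ is an $(n+1)$-\LP pair.

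I expect the main obstacle to be the sign and index bookkeeping in the second paragraph: matching the $V$-component of $(\g\ltimes_{\rho}V)_{\tilde f}$ --- a double sum over the index $i$ carrying the factor $f(x_i)$ and the surviving index $j\neq i$ carrying $u_j$ --- against the semidirect $V$-component $\sum_k(-1)^{n+1-k}\varrho(x_1,\cdots,\hat{x_k},\cdots,x_{n+1})(u_k)$, in which $\varrho$ itself unfolds into a sum over an internal $f$-slot. Both expressions range over the same unordered pairs $\{i,k\}$ of ``the $f$-slot'' and ``the $u$-slot'', so they agree term by term; the only genuine work is to confirm that the Koszul signs coincide, which is a finite and routine check.
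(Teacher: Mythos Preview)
Your argument is correct and genuinely different from the paper's. The paper proves the proposition by a head-on verification of the two representation identities \eqref{eq:rep1} and \eqref{eq:rep2} for $\varrho$: it expands $[\varrho(\frkX),\varrho(\frkY)]-\varrho(\frkX\circ\frkY)$ and the analogous expression for \eqref{eq:rep2}, regroups the $f(x_k)f(y_j)$ factors, and reduces each to the corresponding identity for $\rho$ together with $f([x_1,\cdots,x_n]_\g)=0$. Your route instead pushes the problem up to the semidirect product: you extend $f$ trivially to $\tilde f$ on $\g\ltimes_\rho V$, invoke Lemma~\ref{n-n+1} once on the larger algebra, and then read off that the resulting $(n+1)$-Lie bracket has the exact semidirect product shape $\mu_{\g_f}+\varrho$; the Maurer-Cartan characterization of $(n+1)$-\LP pairs then forces $\varrho$ to be a representation. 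This is more conceptual and economical---Lemma~\ref{n-n+1} absorbs all the Jacobi-type work---at the cost of relying on the equivalence between ``$\mu+\varrho$ is an $(n+1)$-Lie bracket'' and ``$\varrho$ is a representation'' from Section~\ref{sec:MC-characterizations LP pairs}. The sign check you flag as the main obstacle indeed goes through: for the $f$-slot $i$ and the $u$-slot $k$ with $i<k$ both sides give $(-1)^{n+i-k}$, and for $i>k$ both give $(-1)^{n+i-k-1}$.
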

\begin{proof}
Since $(\g,[-,\cdots,-]_{\g};\rho)$ is an $n$-\LP pair and $f([x_1,\cdots,x_n])=0$ for all $x_i\in \g$, by a direct calculation, we have
\begin{eqnarray*}
&&[\varrho(x_1,\cdots,x_n),\varrho(y_1,\cdots,y_n)]-\sum_{i=1}^n\varrho(y_1,\cdots,y_{i-1},\{x_1,\cdots,x_n,y_i\},y_{i+1},\cdots,y_n)\\
&=&[\sum\limits_{k=1}^{n}(-1)^{k-1}f(x_k)\rho(x_1,\cdots,\hat{x_k},\cdots,x_n),
\sum\limits_{j=1}^{n}(-1)^{j-1}f(y_j)\rho(y_1,\cdots,\hat{y_j},\cdots,y_n)]\\
&&-\sum\limits_{i,j,k=1}^{n}(-1)^{k+j}f(x_k)f(y_j)\rho(y_1,\cdots,y_{i-1},[x_1,\cdots,\hat{x_k},\cdots,x_n,y_i]_\g,\cdots,\hat{y_j},\cdots,y_n)\\
&&-\sum\limits_{i,j=1}^{n}(-1)^{n+j-1}f(y_i)f(y_j)\rho(y_1,\cdots,y_{i-1},[x_1,\cdots,x_n]_\g,y_{i+1},\cdots,\widehat{y_j},\cdots,y_n)\\
&=&\sum\limits_{k=1}^{n}\sum\limits_{j=1}^{n}(-1)^{k+j}f(x_k)f(y_j)\big([\rho(x_1,\cdots,\hat{x_k},\cdots,x_n),
\rho(y_1,\cdots,\hat{y_j},\cdots,y_n)]\\
&&-\sum_{i=1}^n\rho(y_1,\cdots,y_{i-1},[x_1,\cdots,\hat{x_k},\cdots,y_i]_\g,\cdots,\hat{y_j},\cdots,y_n)\big)=0,
\end{eqnarray*}
which implies that \eqref{eq:rep1} holds.

Similarly , we have
\begin{eqnarray*}
&&\varrho(x_1,\cdots,x_{n-1},\{y_1,\cdots,y_n,y_{n+1}\})-\sum\limits_{i=1}^{n+1}(-1)^{n+1-i}\varrho(y_1,\cdots,\hat{y_i},\cdots,y_{n+1})\varrho(x_1,\cdots,x_{n-1},y_i)\\
&=&\sum\limits_{k=1}^{n-1}\sum\limits_{j=1}^{n+1}(-1)^{k+j}f(x_k)f(y_j)\rho(x_1,\cdots,\hat{x_k},\cdots,x_{n-1},
[y_1,\cdots,\hat{y_j},\cdots,y_{n+1}]_\g)\\
&&-\sum\limits_{j=1}^{n+1}\sum\limits_{k=1}^{n-1}(-1)^{n+k-j}\big(\sum\limits_{i=1}^{j-1}(-1)^{i}f(y_j)f(x_k)\rho(y_1,\cdots,\hat{y_i},\cdots,\hat{y_j}
,\cdots,y_{n+1})\rho(x_1,\cdots,\hat{x_k},\cdots,x_{n-1},y_i)\\
&&+\sum\limits_{i=j+1}^{n+1}(-1)^{i+1}f(y_j)f(x_k)\rho(y_1,\cdots,\hat{y_j},\cdots,\hat{y_i}
,\cdots,y_{n+1})\rho(x_1,\cdots,\hat{x_k},\cdots,x_{n-1},y_i)\big)\\
&=&\sum\limits_{k=1}^{n-1}\sum\limits_{j=1}^{n+1}(-1)^{k+j}f(y_j)f(x_k)\big(\rho(x_1,\cdots,\hat{x_k},\cdots,x_{n-1},
[y_1,\cdots,\hat{y_j},\cdots,y_{n+1}]_\g)\\
&&-\sum\limits_{i=1}^{j-1}(-1)^{n-i}\rho(y_1,\cdots,\hat{y_i},\cdots,\hat{y_j}
,\cdots,y_{n+1})\rho(x_1,\cdots,\hat{x_k},\cdots,x_{n-1},y_i) \\
&&-\sum\limits_{i=j+1}^{n+1}(-1)^{n-i+1}\rho(y_1,\cdots,\hat{y_j},\cdots,\hat{y_i}
,\cdots,y_{n+1})\rho(x_1,\cdots,\hat{x_k},\cdots,x_{n-1},y_i) \big)=0,
\end{eqnarray*}
which implies that \eqref{eq:rep2} holds. Thus $(\g_f;\varrho)$ is an $(n+1)$-\LP pair.
\end{proof}
Denote the cochain complexes of $n$-Lie algebra $\g$ associated to the representation $\rho$ and $(n+1)$-Lie algebra $\g_f$ associated to the representation $\varrho$ by $(\oplus^{+\infty}_{m=1}C^{m}_{\nl}(\g;V),\partial_\rho)$ and $(\oplus^{+\infty}_{m=1}C^{m}_{ {(n+1)-\rm{Lie}}}(\g_f;V),\tilde{\partial}_\varrho)$, respectively.
\begin{thm}\label{theooo}
Let $(\frak g,[-,\cdots,-];\rho)$ be an $n$-\LP pair and $(\g_f;\varrho)$ be the corresponding $(n+1)$-\LP pair. For $P\in C^{m+1}(\g;V)~(m\geq 1)$, define $\widetilde{P}(\frkX_1,\cdots,\frkX_m,x)$ by
\begin{eqnarray}\label{eq:lift-cocycle}
&&\widetilde{P}(\frkX_1,\cdots,\frkX_m,x):=\sum\limits_{i_1,\cdots,i_m=1}^{n}(-1)^{i_1+\cdots+i_m-m}f(x_1^{i_1})\cdots f(x_m^{i_m})
P(\frkX^{\widehat{i_1}}_1,\cdots,\frkX^{\widehat{i_m}}_m,x)\\
\nonumber&&+\sum\limits_{i_1,\cdots,i_{m-1}=1}^{n}(-1)^{i_1+\cdots+i_{m-1}+n+1-m}f(x_1^{i_1})\cdots f(x_{m-1}^{i_{m-1}})f(x)
P(\frkX^{\widehat{i_1}}_1,\cdots,\frkX^{\widehat{i_{m-1}}}_{m-1},\frkX_m),
\end{eqnarray}
where $\frkX_j:=x_j^1\wedge\cdots \wedge x_j^n$ and $\frkX_j^{\widehat{{i_j}}}:=x_j^1\wedge \cdots \wedge\widehat{x_j^{i_j}}\cdots\wedge x_j^n$. Then $\tilde{P}\in {C^{m+1}}(\g_f;V)$, i.e., $\tilde{P}$ is an $(m+1)$-cochain of the $n$-Lie algebra $\g_f$. Thus we obtain a well-defined linear map
$$\Phi:\oplus^{+\infty}_{m=1}C^{m}_{\nl}(\g;V)\longrightarrow \oplus^{+\infty}_{m=1}C^{m}_{ {(n+1)-\rm{Lie}}}(\g_f;V)$$
 defined by
 \begin{eqnarray}
   \Phi(P)=
   \begin{cases}
     \widetilde{P}, & \forall~P\in C^{m}(\g,V)~(m\geq2),\\
    P,& \forall~P\in C^{1}(\g,V).
     \end{cases}
\end{eqnarray}

Furthermore, we also have $\tilde{\partial}_\varrho \circ \Phi=\Phi\circ\partial_\rho $, i.e., $\Phi$ is a chain map between $(\oplus^{+\infty}_{m=1}C^{m}_{\nl}(\g;V),\partial_\rho)$ and $(\oplus^{+\infty}_{m=1}C^{m}_{ {(n+1)-\rm{Lie}}}(\g_f;V),\tilde{\partial}_\varrho)$. Thus $\Phi$ induces a map $$\Phi_\ast:\oplus^{+\infty}_{m=1}H^m_{\nl}(\g;V)\longrightarrow\oplus^{+\infty}_{m=1}H^m_{ {(n+1)-\rm{Lie}}}(\g_f;V) $$
given by
$$\Phi_\ast([P])=[\Phi(P)],\quad \forall~\in [P]\in H^m_{\nl}(\g;V).$$
\end{thm}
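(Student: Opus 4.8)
The statement bundles three assertions: that each $\widetilde P$ is a genuine cochain of $\g_f$ (so that $\Phi$ is well defined), that $\Phi$ intertwines the two coboundary operators, and that $\Phi$ therefore descends to cohomology. The last is automatic once the chain-map identity $\tilde\partial_\varrho\circ\Phi=\Phi\circ\partial_\rho$ is established, since a chain map carries cocycles to cocycles and coboundaries to coboundaries, so $\Phi_\ast([P]):=[\Phi(P)]$ is independent of the chosen representative. The organizing device for the whole proof is the contraction (interior product) $\iota_f$ with $f$, lowering exterior degree by one in every arity, $\iota_f(x^1\wedge\cdots\wedge x^k)=\sum_{i=1}^k(-1)^{i-1}f(x^i)\,x^1\wedge\cdots\wedge\widehat{x^i}\wedge\cdots\wedge x^k$. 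In this language the first sum defining $\widetilde P$ is simply $P(\iota_f\frkX_1,\cdots,\iota_f\frkX_m,x)$, the representation of Proposition \ref{pro:new LieRep} is $\varrho(\frkX)=\rho(\iota_f\frkX)$, and the $(n+1)$-bracket of Lemma \ref{n-n+1} is $\{x_1,\cdots,x_{n+1}\}=[-,\cdots,-]_\g\big(\iota_f(x_1\wedge\cdots\wedge x_{n+1})\big)$. Recording these three identities first lets me replace every occurrence of $\varrho$ and of the $\g_f$-bracket by $\rho$, $[-,\cdots,-]_\g$ and a contraction, so that both complexes are expressed in the single language of $\rho$, $[-,\cdots,-]_\g$ and $\iota_f$.

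First I would prove well-definedness. I must check that $\widetilde P$ is alternating in the $n$ slots of each block $\frkX_j$. This is exactly the elementary fact that $\iota_f$ maps $\wedge^n\g$ into $\wedge^{n-1}\g$, i.e. that contracting a skew-symmetric argument with $f$ again yields something skew-symmetric: transposing two adjacent factors $x_j^a,x_j^{a+1}$ in $\frkX_j$ and matching terms pairwise produces the sign $-1$. The signs $(-1)^{i_1+\cdots+i_m-m}$ and $(-1)^{i_1+\cdots+i_{m-1}+n+1-m}$ in the definition are precisely those generated by $\iota_f$, so the two sums assemble into an element of $C^{m+1}(\g_f;V)$.

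The heart of the argument is the chain-map identity, which I would prove by expanding both sides with the four-term coboundary formula. On the right, $\widetilde{\partial_\rho P}$ is the $\iota_f$-contraction of the four families of terms of $\partial_\rho P$ (the $\frkX_j\circ\frkX_k$ terms, the $[\frkX_j,x]_\g$ terms, the $\rho(\frkX_j)$ terms, and the final-slot $\rho$-terms). On the left, $\tilde\partial_\varrho(\widetilde P)$ is the analogous expansion for $\g_f$, with $n$ replaced by $n+1$, the bracket replaced by $\{-,\cdots,-\}$, and $\rho$ by $\varrho$. Using $\varrho=\rho\circ\iota_f$, the defining formula of the $\g_f$-bracket, the hypothesis $f([x_1,\cdots,x_n]_\g)=0$ (which also forces $f$ to annihilate every $\g_f$-bracket, hence kills the contraction $\iota_f$ applied to any bracketed output), and the representation identities \eqref{eq:rep1}--\eqref{eq:rep2} already verified for $\varrho$ in Proposition \ref{pro:new LieRep}, I would match the two expansions family by family. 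The base case $P\in C^1(\g;V)$, where $\Phi$ is the identity, is a short direct check that $\tilde\partial_\varrho P=\widetilde{\partial_\rho P}$ under the same substitutions.

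The main obstacle is the bookkeeping for the second sum in the definition of $\widetilde P$, the one carrying the factor $f(x)$ and feeding the whole block $\frkX_m$ into the terminal slots of $P$ rather than contracting it. In the coboundary expansion this term interacts with the families that bracket into, or act on, the final $\g$-argument, so there $\iota_f$ is effectively applied across the last block and the distinguished vector $x$ simultaneously. Verifying that the extra sign $(-1)^{n+1-m}$ is exactly the one required by the alternating structure on the $n+1$ entries of each block in the $(n+1)$-Lie complex, and that the cross terms between the two sums cancel against the Jacobiator-type $\frkX_j\circ\frkX_k$ contributions, is where the delicate accounting lies. Once these cancellations are confirmed the two expansions coincide, giving $\tilde\partial_\varrho\circ\Phi=\Phi\circ\partial_\rho$, and the induced map $\Phi_\ast$ on cohomology follows at once.
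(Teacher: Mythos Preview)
Your proposal is correct and, in fact, more informative than the paper's own proof, which consists of the single sentence ``It follows by a long but straightforward computation.'' Both approaches are direct verifications; your introduction of the contraction $\iota_f$ (so that $\varrho=\rho\circ\iota_f$, the $(n{+}1)$-bracket is $[-,\cdots,-]_\g\circ\iota_f$, and $\widetilde P(\frkX_1,\ldots,\frkX_m,x)=P\bigl(\iota_f\frkX_1,\ldots,\iota_f\frkX_{m-1},\iota_f(\frkX_m\wedge x)\bigr)$) is a genuine organizational improvement that makes the term-by-term matching and the role of the hypothesis $f\circ[-,\cdots,-]_\g=0$ transparent, whereas the paper simply asserts the outcome of the same calculation.
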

\begin{proof}
  It follows by a long but straightforward computation.
\end{proof}

\begin{pro}\label{pro:rRN-operator}
Let $T:V\rightarrow \g$ be a relative Rota-Baxter operator on an $n$-\LP pair $(\g,[-,\cdots,-]_{\g};\rho)$. Then $T$ is also a relative Rota-Baxter operator on an $(n+1)$-\LP pair $(\g_f;\varrho)$.
\end{pro}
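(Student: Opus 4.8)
The plan is to verify directly that $T$ satisfies the defining equation of a relative Rota-Baxter operator on the $(n+1)$-\LP pair $(\g_f;\varrho)$, namely
$$\{Tv_1,\cdots,Tv_{n+1}\} = \sum_{i=1}^{n+1}(-1)^{(n+1)-i}\,T\big(\varrho(Tv_1,\cdots,\widehat{Tv_i},\cdots,Tv_{n+1})(v_i)\big),$$
for all $v_1,\cdots,v_{n+1}\in V$. That $(\g_f;\varrho)$ is a genuine $(n+1)$-\LP pair is already furnished by Proposition \ref{pro:new LieRep}, so only this identity remains. The strategy is to reduce both sides to the same double sum indexed by ordered pairs $(i,j)$ with $i\neq j$, whose generic term is a scalar multiple of $f(Tv_j)\,T\big(\rho(Tv_1,\cdots,\widehat{Tv_i},\cdots,\widehat{Tv_j},\cdots,Tv_{n+1})(v_i)\big)$, so that the proof amounts to checking that the signs on the two sides agree.

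First I would expand the left-hand side using the definition of the bracket of $\g_f$ from Lemma \ref{n-n+1},
$$\{Tv_1,\cdots,Tv_{n+1}\} = \sum_{j=1}^{n+1}(-1)^{j-1}f(Tv_j)\,[Tv_1,\cdots,\widehat{Tv_j},\cdots,Tv_{n+1}]_\g,$$
and then apply the relative Rota-Baxter identity \eqref{13} for $(\g;\rho)$ to each inner $n$-bracket. Since $v_i$ (with $i\neq j$) sits in position $i$ or $i-1$ of the shortened list according as $i<j$ or $i>j$, this contributes the coefficient $(-1)^{n-i}$ for $i<j$ and $(-1)^{n-i+1}$ for $i>j$, giving the left-hand side as the announced double sum. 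Next I would expand the right-hand side using the definition of $\varrho$ from Proposition \ref{pro:new LieRep}; here $Tv_j$ (with $j\neq i$) sits in position $j$ or $j-1$ of the list obtained by deleting $Tv_i$, according as $j<i$ or $j>i$, contributing the coefficient $(-1)^{j-1}$ for $j<i$ and $(-1)^{j}$ for $j>i$. In both sums the $\rho$-argument is the list with both $Tv_i$ and $Tv_j$ deleted in their natural order, so the two expressions are term-by-term comparable.

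The remaining step is a sign check: for $i<j$ the left coefficient is $(-1)^{n+j-i-1}$ and the right coefficient is $(-1)^{n+j-i+1}$, while for $i>j$ both equal $(-1)^{n+j-i}$; since $(-1)^{\pm1}$ agree, the coefficients match in every case and the identity follows. I expect this sign bookkeeping to be the only real obstacle, as no further structural input is needed; in particular the hypothesis $f([x_1,\cdots,x_n]_\g)=0$ is not used in this identity, entering only through Proposition \ref{pro:new LieRep} to guarantee that $(\g_f;\varrho)$ is an $(n+1)$-\LP pair in the first place. As an alternative one could argue conceptually: writing $\bar f = f\circ T\in V^*$, the morphism property of $T$ from Proposition \ref{pro6.5} together with $f([x_1,\cdots,x_n]_\g)=0$ gives $\bar f([u_1,\cdots,u_n]_T)=0$, so that $T$ becomes an $(n+1)$-Lie algebra morphism $(V,[-,\cdots,-]_T)_{\bar f}\to \g_f$; one then checks that the $(n+1)$-bracket on $V$ induced by $\varrho$ through $T$ coincides with the $\bar f$-twist of $[-,\cdots,-]_T$, which again reduces to the same sign identity.
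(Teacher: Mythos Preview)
Your proposal is correct and follows essentially the same route as the paper: expand $\{Tv_1,\cdots,Tv_{n+1}\}$ via Lemma \ref{n-n+1}, apply the $n$-ary Rota-Baxter identity \eqref{13} to each resulting $n$-bracket, and do the sign bookkeeping to recognize the right-hand side in terms of $\varrho$. The paper carries out the computation one-sidedly (transforming the left-hand side into the right-hand side by reindexing the double sum) rather than expanding both sides to a common form as you do, but this is the same argument; your observations that only sign matching is at stake and that the hypothesis $f([x_1,\cdots,x_n]_\g)=0$ enters only through Proposition \ref{pro:new LieRep} are both accurate.
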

\begin{proof}
By that the fact that $T:V\rightarrow \g$ is a relative Rota-Baxter operator on the $n$-\LP pair $(\g;\rho)$, we have
\begin{eqnarray*}
&&\{Tu_1,\cdots,Tu_{n+1}\}\\
&=&\sum\limits_{i=1}^{n+1}(-1)^{i-1}f(Tu_i)[Tu_1,\cdots,\widehat{Tu_i},\cdots,Tu_{n+1}]_\g\\
&=&\sum\limits_{i=1}^{n+1}(-1)^{i-1}f(Tu_i)\sum\limits_{j=1}^{i-1}(-1)^{n-j}T\rho(Tu_1,\cdots,\widehat{Tu_j},\cdots,\widehat{Tu_i},\cdots,Tu_{n+1})(u_j)\\
&&+\sum\limits_{i=1}^{n+1}(-1)^{i-1}f(Tu_i)\sum\limits_{j=i+1}^{n+1}(-1)^{n-j+1}T\rho(Tu_1,\cdots,\widehat{Tu_i},\cdots,\widehat{Tu_j},\cdots,Tu_{n+1})(u_j)\\
&=&T\sum\limits_{j=1}^{n+1}(-1)^{n+1-j}\big(\sum\limits_{i=j+1}^{n+1}(-1)^if(Tu_i)\rho(Tu_1,\cdots,\widehat{Tu_j},\cdots,\widehat{Tu_i},\cdots,Tu_{n+1})(u_j)\\
&&+\sum\limits_{i=1}^{j-1}(-1)^{i-1}f(Tu_i)\rho(Tu_1,\cdots,\widehat{Tu_i},\cdots,\widehat{Tu_j},\cdots,Tu_{n+1})(u_j)\big)\\
&=&T\sum\limits_{j=1}^{n+1}(-1)^{n+1-j}\varrho(Tu_1,\cdots,\widehat{Tu_j},\cdots,Tu_{n+1}).
\end{eqnarray*}
This implies that $T$ is a relative Rota-Baxter operator on an $(n+1)$-\LP pair $(\g_f;\varrho)$.
\end{proof}
If $T:V\rightarrow \g$ is a relative Rota-Baxter operator on an $n$-\LP pair $(\g,[-,\cdots,-]_{\g};\rho)$, by Proposition \ref{pro:rRN-operator}, $T$ is a relative Rota-Baxter operator on an $(n+1)$-\LP pair $(\g_f;\varrho)$. Thus by Proposition \ref{pro6.5},  $(V,\Courant{-,\cdots,-}_T)$ is an $(n+1)$-Lie algebra, where the bracket $\Courant{-,\cdots,-}_T$ is given by
\begin{eqnarray}
\Courant{u_1,\cdots,u_n,u_{n+1}}_{T}&=&\sum\limits^{n+1}_{i=1}(-1)^{n+1-i}\varrho(Tu_1,\cdots,\widehat{Tu_i},\cdots,Tu_n,Tu_{n+1})(u_i),\quad \forall~u_1,\cdots,u_{n+1}\in V.
\end{eqnarray}
Furthermore, by a direct calculation, we have
\begin{equation}
\Courant{u_1,\cdots,u_n,u_{n+1}}_{T}=\sum\limits_{i=1}^{n+1}(-1)^{i-1}(f\circ T)(u_i)[u_1,\cdots,\hat{u_i},\cdots,u_n,u_{n+1}]_T,\quad \forall~u_1,\cdots,u_n\in V.
  \end{equation}
By the fact that $T$ is a relative Rota-Baxter operator on an $n$-\LP pair $(\g,[-,\cdots,-]_{\g};\rho)$ and $f([x_1,\cdots,x_n]_\g)=0$ for $x_1,\cdots,x_n\in\g$, we have
\begin{eqnarray}
  (f\circ T)[u_1,\cdots,u_{n}]_T=f([T u_1,\cdots,T u_n]_\g)=0.
\end{eqnarray}
Thus we have
\begin{pro}\label{pro:construction by RB-operators}
The $(n+1)$-Lie algebra $(V,\Courant{-,\cdots,-}_T)$ is just the $(n+1)$-Lie algebra constructed by the $n$-Lie algebra $(V,[-,\cdots,-]_T)$ with $ f\circ T\in V^*$ satisfying $(f\circ T)[u_1,\cdots,u_{n}]_T=0$ through the way given in Lemma \ref{n-n+1}.
\end{pro}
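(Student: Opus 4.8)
The plan is to recognize that both $(n+1)$-Lie structures under comparison are assembled from the same data and to reduce the statement to a direct application of Lemma \ref{n-n+1}. On one side, the bracket $\Courant{-,\cdots,-}_T$ arises by transporting, through the relative Rota-Baxter operator $T$, the representation $\varrho$ of the $(n+1)$-\LP pair $(\g_f;\varrho)$ (via Proposition \ref{pro:rRN-operator} and Proposition \ref{pro6.5}). On the other side, Lemma \ref{n-n+1} produces an $(n+1)$-Lie algebra out of the $n$-Lie algebra $(V,[-,\cdots,-]_T)$ together with the linear functional $f\circ T\in V^*$. So I would show that these two $(n+1)$-ary brackets literally coincide.

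First I would verify that $f\circ T$ is an admissible datum for Lemma \ref{n-n+1}, i.e.\ that $(f\circ T)([u_1,\cdots,u_n]_T)=0$ for all $u_i\in V$. By Proposition \ref{pro6.5}, $T$ is an $n$-Lie algebra morphism from $(V,[-,\cdots,-]_T)$ to $(\g,[-,\cdots,-]_\g)$, so $T[u_1,\cdots,u_n]_T=[Tu_1,\cdots,Tu_n]_\g$; since $f$ annihilates every bracket of $\g$ by hypothesis, one gets $(f\circ T)([u_1,\cdots,u_n]_T)=f([Tu_1,\cdots,Tu_n]_\g)=0$. This is exactly the admissibility condition, so Lemma \ref{n-n+1} applies and yields the $(n+1)$-Lie algebra constructed from $(V,[-,\cdots,-]_T)$ and $f\circ T$.

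Next I would check that the two brackets agree. Substituting the definition of $\varrho$ into $\Courant{u_1,\cdots,u_{n+1}}_T=\sum_{i=1}^{n+1}(-1)^{n+1-i}\varrho(Tu_1,\cdots,\widehat{Tu_i},\cdots,Tu_{n+1})(u_i)$, each summand expands into a sum over an index $k\neq i$ carrying the coefficient $f(Tu_k)=(f\circ T)(u_k)$, times $\rho$ of the remaining $n-1$ transported elements acting on $u_i$. Regrouping this double sum according to the index $k$ that carries the functional $f\circ T$, and recognizing the inner sum over $i$ as the expansion of $[u_1,\cdots,\widehat{u_k},\cdots,u_{n+1}]_T$, produces precisely $\sum_{k=1}^{n+1}(-1)^{k-1}(f\circ T)(u_k)[u_1,\cdots,\widehat{u_k},\cdots,u_{n+1}]_T$, which is the bracket of Lemma \ref{n-n+1}. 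This identity is exactly the one displayed before the statement as a ``direct calculation.''

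The main obstacle is purely the sign bookkeeping in this regrouping: one must track the sign $(-1)^{n+1-i}$ coming from $\Courant{-,\cdots,-}_T$, the sign $(-1)^{k-1}$ internal to the definition of $\varrho$, and the signs produced when expanding $[-,\cdots,-]_T$, splitting into the cases $k<i$ and $k>i$, and confirm that they reassemble into the single sign $(-1)^{k-1}$ together with the correct internal signs of $[u_1,\cdots,\widehat{u_k},\cdots,u_{n+1}]_T$. Once the two $(n+1)$-brackets are matched in this way, the conclusion follows immediately from Lemma \ref{n-n+1}, since the admissibility of $f\circ T$ has already been established.
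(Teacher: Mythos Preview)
Your proposal is correct and follows essentially the same approach as the paper: the paper's argument is contained in the displays immediately preceding the proposition, where it records the identity $\Courant{u_1,\cdots,u_{n+1}}_T=\sum_{i=1}^{n+1}(-1)^{i-1}(f\circ T)(u_i)[u_1,\cdots,\hat{u_i},\cdots,u_{n+1}]_T$ by direct calculation and checks $(f\circ T)[u_1,\cdots,u_n]_T=f([Tu_1,\cdots,Tu_n]_\g)=0$, which is exactly what you outline.
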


Denote the cochain complexes of the relative Rota-Baxter operator $T$ on both  the $n$-\LP pair $(\frak g;\rho)$  and the $(n+1)$-\LP pair $(\frak g_f;\varrho)$ by $(\oplus^{+\infty}_{m=0}C^{m}_T(V;\g),d)$ and $(\oplus^{+\infty}_{m=0}C^{m}_{T}(V;\g_f),\tilde{d})$, respectively.
\begin{pro}
With the above notations, there is  a chain map
	$$\Phi:(\oplus^{+\infty}_{m=0}C^{m}_T(V;\g),d)\longrightarrow (\oplus^{+\infty}_{m=0}C^{m}_{T}(V;\g_f),\tilde{d})$$
defined by
\begin{eqnarray}
   \Phi(P)=
   \begin{cases}
     \widetilde{P}, & \forall~P\in C^{m}(V,\g)~(m\geq2),\\
    P,& \forall~P\in C^{1}(V,\g),\\
    P\wedge x_0, & \forall~P\in \wedge^{n-1} \g,
     \end{cases}
\end{eqnarray}
where $x_0$ is an element in the center of the semi-direct product $n$-Lie algebra $\g\ltimes_{\rho} V$ and $\widetilde{P}$ is given by
\begin{eqnarray}\label{eq:RB-lift}
&&\widetilde{P}(\frkU_1,\cdots,\frkU_{m-1},v):=\sum\limits_{i_1,\cdots,i_{m-1}=1}^{n}(-1)^{i_1+\cdots+i_{m-1}-m+1}(f\circ T)(u_1^{i_1})\cdots (f\circ T)(u_{m-1}^{i_{m-1}})
P(\frkU^{\widehat{i_1}}_1,\cdots,\frkU^{\widehat{i_{m-1}}}_{m-1},v)\\
\nonumber&&+\sum\limits_{i_1,\cdots,i_{m-2}=1}^{n}(-1)^{i_1+\cdots+i_{m-1}+n-m}(f\circ T)(u_1^{i_1})\cdots (f\circ T)(u_{m-2}^{i_{m-2}})(f\circ T)(v)
P(\frkU^{\widehat{i_1}}_1,\cdots,\frkU^{\widehat{i_{m-2}}}_{m-2},\frkU_{m-1}),
\end{eqnarray}
where $\frkU_j:=u_j^1\wedge\cdots \wedge u_j^n$ and $\frkU_j^{\widehat{{i_j}}}:=u_j^1\wedge \cdots \wedge\widehat{u_j^{i_j}}\cdots\wedge u_j^n$.

Thus $\Phi$ induces a map $$\Phi_\ast:\oplus^{+\infty}_{m=0}\huaH^m_{\nl}(V;\g)\longrightarrow\oplus^{+\infty}_{m=0}\huaH^m_{ {(n+1)-\rm{Lie}}}(V;\g_f) $$
given by
$$\Phi_\ast([P])=[\Phi(P)],\quad \forall~\in [P]\in \huaH^m_{\nl}(V;\g).$$
\end{pro}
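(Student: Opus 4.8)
The plan is to split the verification of the chain-map identity $\tilde d\circ\Phi=\Phi\circ d$ according to cochain degree, reducing the degrees $m\geq 1$ to the already-proven Theorem \ref{theooo} after interchanging the roles of the algebra and the module, and treating the bottom degree $m=0$ (which is \emph{not} covered by Theorem \ref{theooo}, since that theorem concerns the plain $n$-Lie algebra complex rather than the relative Rota-Baxter complex) by a direct computation with the central element $x_0$.

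First I would record how the two complexes arise. By the lemma defining $\rho_T$ in Eq. \eqref{eqlem4.2}, the complex $(\oplus_{m\geq 1}C^m_T(V;\g),\partial_T)$ is exactly the cohomology complex of the $n$-Lie algebra $(V,[-,\cdots,-]_T)$ with coefficients in $(\g;\rho_T)$. By Proposition \ref{pro:rRN-operator}, $T$ is also a relative Rota-Baxter operator on the $(n+1)$-\LP pair $(\g_f;\varrho)$, so $(\oplus_{m\geq 1}C^m_T(V;\g_f),\tilde d)$ is the cohomology complex of the $(n+1)$-Lie algebra $(V,\Courant{-,\cdots,-}_T)$ with coefficients in the induced representation, say $(\g;\varrho_T)$. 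Now Proposition \ref{pro:construction by RB-operators} identifies $(V,\Courant{-,\cdots,-}_T)$ with the $(n+1)$-Lie algebra built from $(V,[-,\cdots,-]_T)$ via Lemma \ref{n-n+1} and the functional $f\circ T\in V^*$, which satisfies $(f\circ T)([u_1,\cdots,u_n]_T)=0$. I would complement this with the companion fact for representations, namely that $\varrho_T$ coincides with the representation $(\rho_T)_{f\circ T}$ produced by Proposition \ref{pro:new LieRep} out of $(\g;\rho_T)$ and $f\circ T$; this is a direct computation entirely parallel to the proof of Proposition \ref{pro:construction by RB-operators}, expanding $\{-,\cdots,-\}_{\g_f}$ and $\varrho$ through $[-,\cdots,-]_\g$, $\rho$ and $f$ and using that $T$ is a relative Rota-Baxter operator. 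Granting these two identifications, the triple $(V,[-,\cdots,-]_T;\rho_T)$ together with $f\circ T$ is precisely an instance of the data of Theorem \ref{theooo}, with $\g$ replaced by $V$, $V$ by $\g$, $\rho$ by $\rho_T$ and $f$ by $f\circ T$. Under this substitution the lift \eqref{eq:RB-lift} is literally the lift \eqref{eq:lift-cocycle} (with the degree index shifted by one, and with $\Phi$ the identity on $C^1$); hence Theorem \ref{theooo} yields that $\Phi$ is well-defined on $C^m_T$ and satisfies $\tilde d\circ\Phi=\Phi\circ d$ for all $m\geq 1$.

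For the bottom degree I would compute directly. Writing $\tilde\delta_T$ for $\tilde d$ restricted to $\wedge^n\g$ (the degree-zero analogue of \eqref{eq:0-cocycle}), for $P=x^1\wedge\cdots\wedge x^{n-1}\in\wedge^{n-1}\g$ and $v\in V$ one has
\[
\tilde\delta_T(P\wedge x_0)(v)=T\varrho(x^1,\cdots,x^{n-1},x_0)v-\{x^1,\cdots,x^{n-1},x_0,Tv\}_{\g_f}.
\]
Since $x_0$ lies in the center of $\g\ltimes_\rho V$, we have $[x_0,-,\cdots,-]_\g=0$ and $\rho(x_0,-,\cdots,-)=0$. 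Using the skew-symmetry of $\rho$ and of $[-,\cdots,-]_\g$, every term in $\varrho(x^1,\cdots,x^{n-1},x_0)$ and in $\{x^1,\cdots,x^{n-1},x_0,Tv\}_{\g_f}$ that places $x_0$ inside $\rho$ or inside a bracket vanishes; only the terms in which $x_0$ is the slot hit by $f$ survive, giving
\[
\varrho(x^1,\cdots,x^{n-1},x_0)=(-1)^{n-1}f(x_0)\,\rho(x^1,\cdots,x^{n-1}),\qquad \{x^1,\cdots,x^{n-1},x_0,Tv\}_{\g_f}=(-1)^{n-1}f(x_0)\,[x^1,\cdots,x^{n-1},Tv]_\g.
\]
After normalizing $x_0$ so that $(-1)^{n-1}f(x_0)=1$, this reduces to $T\rho(P)v-[P,Tv]_\g=\delta_T(P)(v)=\Phi(\delta_T(P))(v)$, so $\tilde d\circ\Phi=\Phi\circ d$ in degree $0$ as well.

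Combining the two ranges shows $\Phi$ is a chain map, and it therefore descends to cohomology with $\Phi_\ast([P])=[\Phi(P)]$. I expect the main obstacle to be the representation-compatibility identity $\varrho_T=(\rho_T)_{f\circ T}$, which is the one genuinely new structural computation, together with careful sign bookkeeping in the degree-zero step and the normalization $(-1)^{n-1}f(x_0)=1$ of the central element; once these are settled, the entire $m\geq 1$ part is inherited verbatim from Theorem \ref{theooo}.
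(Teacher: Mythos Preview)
Your approach is essentially the same as the paper's: for $m\geq 1$ you reduce to Theorem \ref{theooo} via Proposition \ref{pro:construction by RB-operators} (identifying the Rota-Baxter complex with the ordinary $n$-Lie complex for $(V,[-,\cdots,-]_T)$ with coefficients in $(\g;\rho_T)$), and for $m=0$ you compute directly using the centrality of $x_0$. You are in fact more careful than the paper on two points it glosses over---the compatibility $\varrho_T=(\rho_T)_{f\circ T}$ needed to invoke Theorem \ref{theooo}, and the normalization $(-1)^{n-1}f(x_0)=1$ required for the degree-zero square to commute on the nose---both of which are genuine (if routine) checks.
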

\begin{proof}
Note that the cochain complex $(\oplus^{+\infty}_{m=1}C^{m}_T(V;\g),d)$ of the relative Rota-Baxter operator $T$ on  the $n$-\LP pair $(\frak g;\rho)$  is just the cochain complexes of the $n$-Lie algebra $(V,[-,\cdots,-]_T)$ associated to the representation $(\g;\rho_T)$ and the cochain complexes $(\oplus^{+\infty}_{m=1}C^{m}_T(V;\g_f),\tilde{d})$ of the relative Rota-Baxter operator $T$ on  the $(n+1)$-\LP pair $(\frak g_f;\varrho)$  is just the cochain complexes of the $(n+1)$-Lie algebra $(V,[-,\cdots,-]_T)$ associated to the representation $(\g;\varrho_T)$.  By Proposition \ref{pro:construction by RB-operators},  Eq. \eqref{eq:RB-lift} is just Eq. \eqref{eq:lift-cocycle} by replacing  the $n$-Lie algebra $(\g,[-,\cdots,-]_\g)$ and $f\in \g^*$ with the  $n$-Lie algebra $(V,[-,\cdots,-]_T)$ and  $ f\circ T\in V^*$. Thus by Theorem \ref{theooo}, 	$$\Phi:(\oplus^{+\infty}_{m=1}C^{m}_T(V;\g),d)\longrightarrow (\oplus^{+\infty}_{m=1}C^{m}_{T}(V;\g_f),\tilde{d})$$
given by \eqref{eq:RB-lift} is a chain map.

By the fact that $x_0$ is an element in the center of the semi-direct product $n$-Lie algebra $\g\ltimes_{\rho} V$, it is straightforward to check that
$$\Phi (d \frkX)=\tilde{d}\Phi (\frkX),\quad \forall~\frkX\in \wedge^{n-1}\g.$$
Thus $\Phi$ is a chain map. The rest is direct.
\end{proof}

\noindent
{\bf Acknowledgements. } This research is supported by NSFC (11771410,11801066,11901501). We give our warmest thanks to Yunhe Sheng and Rong Tang for very useful comments and discussions.


\begin{thebibliography}{abc}


\bibitem{Arfa18} A. Arfa, N. Ben Fraj and A. Makhlouf, Cohomology and deformations of $n$-Lie algebra morphisms. \emph{J.
Geom. Phys.} 132 (2018), 64-74.

\bibitem{BaLa08}
J. Bagger and N. Lambert,  Gauge symmetry and supersymmetry of multiple $M2$-branes gauge theories. \emph{ Phys. Rev. D} 77 (2008), 065008.

\bibitem{BaLa09}
J. Bagger and N. Lambert,  Three-algebras and $N=6$ Chern-Simons gauge theories. \emph{ Phys. Rev. D} 79 (2009), 025002.

\bibitem{BaiC19} C. Bai, L. Guo and Y. Sheng, Bialgebras, the classical Yang-Baxter equation and Manin triples for
$3$-Lie algebras. \emph{Adv. Theor. Math. Phys.} 23 (2019), 27-74.


\bibitem{BR}
R. Bai, L. Guo and Y. Wu, Rota-Baxter $3$-Lie algebras. \emph{J. Math. Phys.} 54 (2013), 063504.

\bibitem{BWLZ13}
R. Bai, Y. Wu, J. Li and H. Zhou, Constructing $(n+1)$-Lie algebras from $n$-Lie algenras. \emph{J. Phys. A} 54 (2013), 064504.

\bibitem{Bar}
S. Barmeier and Y. Fr\'egier, Deformation-obstruction theory for diagrams of algebras and applications to geometry. to appear in \emph{J. Noncommut. Geom.} arXiv:1806.05142.

 \bibitem{Pre-lie algebra in geometry} D. Burde, Left-symmetric algebras and pre-Lie algebras in
geometry and physics. {\em Cent. Eur. J. Math.} 4 (2006), 323-357.

\bibitem{Casas02}
J.M. Casas, J.L. Loday and T. Pirashvili, Leibniz $n$-algebras. \emph{Forum Math.} 214 (2002), 189-207.

\bibitem{CP1}
V. Chari and A. Pressley, A Guide to Quantum Groups. {Cambridge University Press}, Cambridge, 1994.


\bibitem{CS}
S. Cherkis and C. S{$\ddot{\rm a}$}mann,  Multiple M2-branes and generalized $3$-Lie algebras. \emph{Phys. Rev. D} 78 (2008), 066019.


\bibitem{Das20}
A. Das, Deformations of associative Rota-Baxter operators. \emph{J. Algebra} 560 (2020), 144-180.

\bibitem{review}
J. A. de Azc$\rm\acute{a}$rraga and J. M. Izquierdo, $n$-ary algebras: a review with applications. \emph{ J. Phys. A: Math. Theor.} 43 (2010), 293001.

\bibitem{Azc97}
J. A. de Azc$\rm\acute{a}$rraga and J. C. P\'erez Bueno, Higher-order simple Lie algebras. \emph{Comm.
Math. Phys.} 184 (1997), 669-681.




\bibitem{MeFi08}
P. de Medeiros, J. M. Figueroa-O'Farrill and E. M\'endez-Escobar, Metric Lie $3$-algebras in Bagger-Lamber theory. \emph{J. High Energy Phys.} 8 (2008), 045.

\bibitem{MeFi09}
P. de Medeiros, J. M. Figueroa-O'Farrill, E. M\'endez-Escobar and P. Ritter, On the Lie-algebraic origin of metric $3$-algebras. \emph{Comm. Math. Phys.} 290 (2009), 871-902.

\bibitem{Dr87}
V. G. Drinfeld, Quantum groups. \emph{Proc. Internat. Congr. Math.} (Berkeley, 1986), Amer. Math. Soc., Providence, RI, 1987, 798-820.

\bibitem{Fig08}
J. Figueroa-O'Farrill, Lorentzian Lie $n$-algebras. \emph{J. Math. Phys.} 49 (2008), 113509.

\bibitem{Fig09}
J. Figueroa-O'Farrill, Deformations of $3$-algebras. \emph{J. Math. Phys.} 50 (2009), 113514.

\bibitem{Fili85}
V.T. Filippov, $n$-Lie algebras. \emph{Sib. Mat. Zh.} 26 (1985), 126-140.

\bibitem{Fre04}Y. Fr\'egier, A new cohomology theory associated to deformations of Lie algebra morphisms. \emph{Lett. Math. Phys.} 70 (2004), 97-107.

\bibitem{Fre15} Y. Fr\'egier and M. Zambon, Simultaneous deformations and Poisson geometry. \emph{Compos. Math.} 151 (2015), 1763-1790.

\bibitem{Fre152} Y. Fr\'egier and M. Zambon, Simultaneous deformations of algebras and morphisms via derived brackets. \emph{J. Pure Appl. Algebra} 219 (2015), 5344-5362.


\bibitem{Gerstenhaber1}
M. Gerstenhaber, On the deformation of rings and algebras. \emph{Ann. of Math.} (2)
57 (1953), 591-603.

\bibitem{Gerstenhaber2}
M. Gerstenhaber, The cohomology structure of an associative ring.
\emph{Ann. of Math.} 78 (1963), 267-288.

\bibitem{GS}
M. Gerstenhaber and S. D. Schack, On the deformation of algebra morphisms and diagrams. \emph{Trans. Amer. Math. Soc.} 279 (1983), 1-50.

\bibitem{Get09}
 E. Getzler, Lie theory for nilpotent $L_\infty$-algebras. \emph{Ann. Math. (2)} 170 (2009), 271-301.

\bibitem{GS}
I.Z. Golubchik and V.V. Sokolov, Generalized operator Yang-Baxter equations, integrable ODEs and nonassociative algebras. \emph{J. Nonlinear Math. Phys.}, 7 (2000), 184-197.



\bibitem{Guo}
L. Guo, An Introduction to Rota-Baxter Algebra. \emph{Surveys of Modern Mathematics}, vol.4, Interna-tional Press/Higher Education Press, Somerville, MA/Beijing, 2012, xii+226pp.


\bibitem{HW}
P. Hanlon and M. Wachs, On Lie $k$-algebras. \emph{Adv. Math.} 113 (1995), 206-236.

\bibitem{HW95} P. Hanlon and M. Wachs, On Lie $k$-algebras. \emph{Adv. Math.} 113 (1995), 206-236.

\bibitem{HHM}
P. Ho, R. Hou and Y. Matsuo, Lie $3$-algebra and multiple M$2$-branes. \emph{J. High Energy Phys.} 6 (2008), 020.

\bibitem{trace form}
P. Ho, Y. Imamura and Y. Matsuo, M$2$ to D$2$ revisited. \emph{J. High Energy Phys.} 07 (2008), 003.


\bibitem{Kasy87} S.M. Kasymov, On a theory of $n$-Lie algebras. \emph{Algebra Log.} 26 (1987), 277-297.

\bibitem{Kuper1}
B. A. Kupershmidt, What a classical $r$-matrix really is. \emph{J. Nonlinear Math. Phy.} 6 (1999), 448-488.




\bibitem{LiuJ16} J. Liu, Y. Sheng, Y. Zhou, C. Bai, Nijenhuis operators on $n$-Lie algebras, \emph{Commun. Theor. Phys.}
65 (2016), 659-670.

\bibitem{Makhlouf} A. Makhlouf, On deformations of $n$-Lie algebras. Non-associative and non-commutative algebra and operator theory. \emph{Springer Proc. Math. Stat.} {\bf160}, Springer, Cham, (2016), 55-81.

\bibitem{Man}
A. Mandal, Deformation of Leibniz algebra morphisms. \emph{Homology Homotopy Appl.} 9 (2007), 439-450.

\bibitem{Markl}
M. Markl, Deformation Theory of Algebras and Their Diagrams. \emph{Regional Conference Series in Mathematics}, Number 116, American Mathematical Society (2011).


\bibitem{Nambu} Y. Nambu, Generalized Hamiltonian dynamics. \emph{ Phys. Rev. D} 7 (1973),
                2405-2412.

\bibitem{NR1}
A. Nijenhuis and R. Richardson, Cohomology and deformations in graded Lie algebras. \emph{Bull. Amer.
Math. Soc.} 72 (1966), 1-29.

\bibitem{NR2} A. Nijenhuis and R. Richardson, Commutative algebra cohomology and deformations of Lie and associative algebras. \emph{J. Algebra} 9 (1968), 42-105.


\bibitem{P} G. Papadopoulos, M2-branes, $3$-Lie algebras and
Plucker relations.  \emph{J. High Energy Phys.}  5(2008), 054.


\bibitem{Rot05} M. Rotkiewicz, Cohomology ring of $n$-Lie algebras. \emph{Extr. Math.} 20 (2005), 219-232.

\bibitem{Sta85}
M. Schlessinger and J. D. Stasheff, The Lie algebra structure of tangent cohomology and deformation theory.
\emph{J. Pure Appl. Algebra} 38 (1985), 313-322.

\bibitem{Semonov-Tian-Shansky}
M. Semonov-Tian-Shansky, What is a classical R-matrix? \emph{Funct. Anal. Appl.} 17(1983), 259-272.

\bibitem{Stash92} J. Stasheff, Differential graded Lie algebras, quasi-Hopf algebras and higher homotopy algebras. \emph{Quantum groups (Leningrad, 1990)}, 120-137, Lecture Notes in Math., 1510, Springer, Berlin, 1992.

\bibitem{TBGS}
R. Tang, C. Bai, L. Guo and Y. Sheng, Deformations and their controlling cohomologies of $\huaO$-operators. \emph{Commun. Math. Phys.} 368 (2019), 665-700.

\bibitem{THS}
R. Tang, S. Hou and Y. Sheng, Lie $3$-algebras and deformations of relative Rota-Baxter operators on $3$-Lie algebras.  \emph{J. Algebra} 567(2021),37-62.

\bibitem{Tak}
L. Takhtajan, On foundation of the generalized Nambu mechanics. \emph{Comm. Math. Phys.} 160 (1994), 295-315.

\bibitem{Ta}
L. Takhtajan, A higher order analog of Chevalley-Eilenberg complex and deformation theory of n-algebras. \emph{St. Petersburg Math. J.} 6 (1995), 429-438.

\bibitem{Vor05} T. Voronov, Higher derived brackets and homotopy algebras. \emph{J. Pure Appl. Algebra} 202 (2005), 133-153.


\bibitem{Yau07}
D. Yau, Deformations of coalgebra morphisms. \emph{J. Algebra} 307 (2007), 106-115.

\end{thebibliography}
\end{document}